\def\Dd{\mathcal{D}}
\def\Jj{\mathcal{J}}
\def\Rr{\mathcal{R}}
\def\Ll{\mathcal{L}}
\def\Nn{\mathcal{N}}
\def\Pp{\mathcal{P}}
\def\Xx{\mathfrak{X}}
\def\Yy{\mathfrak{Y}}
\def\Set{\mathrm{Set}}
\def\op{\mathrm{op}}
\def\el{\mathrm{el}}
\def\KO{\mathrm{KO}}
\def\Uu{\mathcal{U}}
\def\supp{\mathrm{supp}}
\def\Frm{\mathrm{Frm}}
\def\Hom{\mathrm{Hom}}
\def\CC{\mathbb{C}}
\def\DD{\mathbb{D}}
\def\one{\mathbf{1}}
\def\Ff{\mathcal{F}}
\def\Gg{\mathcal{G}}
\def\PM{T}
\def\LRB{\mathrm{LRB}}
\def\RRB{\mathrm{RRB}}
\def\Poset{\mathrm{Poset}}
\def\cm{\curlywedge}
\def\cj{\curlyvee}
\def\Spec{\mathrm{Spec}}
\def\Sh{\mathrm{Sh}}
\def\colim{\mathrm{colim}}
\def\Sets{\mathrm{Sets}}
\def\id{\mathrm{id}}
\def\veedot{\ ^\cdot \hspace{-7.2pt}\vee}
\def\sveedot{\ ^\cdot \hspace{-5.2pt}\vee}
\def\bigveedot{\ ^\cdot \hspace{-9.6pt}\bigvee}
\def\inlinebigveedot{\ ^\cdot \hspace{-7.6pt}\bigvee}
\def\eqv{\overset{\sim}{\longrightarrow}}
\def\lra{\leftrightarrows}
\def\dto{\rightrightarrows}
\def\onto{\twoheadrightarrow}
\def\into{\hookrightarrow}
\theoremstyle{plain}
\newtheorem{thm}{Theorem}[section]
\newtheorem{dfn}[thm]{Definition}
\newtheorem{lma}[thm]{Lemma}
\newtheorem{prp}[thm]{Proposition}
\newtheorem{cor}[thm]{Corollary}
\theoremstyle{remark}
\newtheorem{rmk}[thm]{Remark}
\newtheorem{exm}[thm]{Example}
\title[Stone duality for spectral sheaves]{Stone duality for spectral sheaves\\ and the patch monad
}
\thanks{This project has been funded by the European Research Council (ERC) under the European Union's Horizon 2020 research and innovation program (grant agreement No.670624). }
\author{Clemens Berger}
\address{Universit\'e C\^ote d'Azur, Lab. J. A. Dieudonn\'e, Parc Valrose, 06108 Nice Cedex, France}
\email{cberger@math.unice.fr}
\author{Mai Gehrke}
\address{Universit\'e C\^ote d'Azur, Lab. J. A. Dieudonn\'e, Parc Valrose, 06108 Nice Cedex, France}
\email{mai.gehrke@unice.fr}
\keywords{Distributive band, distributive skew lattice, spectral space, spectral sheaf, Stone duality, patch monad, Boolean envelope}
\date{February 2, 2022}
\subjclass[2020]{Primary 18F70, 06E15; Secondary 20M75, 18C15}
\begin{document}

\begin{abstract}
We establish a duality between global sheaves on spectral spaces and right distributive bands. This is a sheaf-theoretical extension of classical Stone duality between spectral spaces and bounded distributive lattices.

The topology of a spectral space admits a refinement, the so-called patch topology, giving rise to a patch monad on sheaves over a fixed spectral space. Under the duality just mentioned the algebras of this patch monad are shown to correspond to distributive skew lattices.
\end{abstract}

\maketitle

\section*{Introduction}
Classical Stone duality \cite{Stone37} between spectral spaces and bounded distributive lattices has been inspirational for several domains such as algebraic geometry \cite{Groth}, categorical logic \cite{Johnstone} and theoretical computer science \cite{Abr91}. In this article we deal with a sheaf-theoretical extension of Stone duality. Starting point is the observation that the local sections of any sheaf come equipped with a pertinent non-commutative algebraic structure. We call this structure a \emph{distributive band} because its universal commutative quotient is a \emph{distributive lattice}. If the sheaf is defined on a spectral space, the distributive band of local sections over compact open subsets determines the sheaf as well as the space. This results in a dual equivalence between \emph{global sheaves on spectral spaces} and \emph{right distributive bands}, cf. Theorem \ref{thm:main0}.

Spectral spaces which satisfy Hausdorff's separation axiom are usually called \emph{Stone spaces} or \emph{Boolean spaces}. Global sheaves on Boolean spaces have attracted much attention, and, restricting to these, we recover known dualities, cf. Bergman \cite{Berg} and Kudryavtseva-Lawson \cite{KL}. The benefit of a duality over general spectral spaces is its interaction with the so-called \emph{patch topology}, cf. \cite{Groth,Hoch}. Replacing the topology of a spectral space $\Xx$ with the finer patch topology defines a Boolean space $\Xx^p$, which is also known in literature as the \emph{Priestley space} \cite{P} associated with the spectral space provided the \emph{specialisation order} of $\Xx$ is kept as part of the structure of $\Xx^p$. The identity mapping $\phi:\Xx^p\to\Xx$ induces then a monad $T=\phi_*\phi^*$ on the category of sheaves on $\Xx$ which we call the \emph{patch monad}. This construction is reminiscent of Godement's flabbyfication monad \cite{God} for general sheaves, based on the discretisation of the topology. We obtain two main results concerning the structure of $T$-algebras, cf. Theorems \ref{thm:main1} and \ref{thm:main2}:
\begin{enumerate}
\item a global sheaf on $\Xx$ carries a $T$-algebra structure if and only if the right distributive band of local sections is actually a \emph{right distributive skew lattice} in the sense of Leech \cite{Le3}, cf. Section \ref{subsct:skew} for terminology. These $T$-structured spectral sheaves are in particular quasi-flasque \cite{Kempf}: all restriction maps between compact open subsets are surjective, cf. Proposition \ref{prp:skewsheaf}.

\item the category of $T$-algebras is equivalent to the category $\Sh_{s}(\Xx^p)$ of sheaves on $\Xx^p$ with \emph{saturated} support (i.e. the support is an upset for the Priestley order). More precisely, the restriction of the right adjoint functor $\phi_*:\Sh(\Xx^p)\to\Sh(\Xx)$ to sheaves with saturated support is \emph{monadic} and induces thus an equivalence of categories $\Sh_{s}(\Xx^p)\eqv\Sh(\Xx)^{\phi_*\phi^*}$.
\end{enumerate}

The structural characterisation (1) identifies the notion of distributive skew lattice as natural from a sheaf-theoretical point of view. Together with monadicity (2) this recovers the dual equivalence between the categories of distributive skew lattices and of globally supported Priestley sheaves, obtained by Bauer et al. in \cite{BCGGK}. Another outcome of monadicity is the observation that the category of $T$-algebras is an exact category in the sense of Barr \cite{Barr}. Finally, we deduce from (1) and (2) that distributive skew lattices admit a \emph{Boolean skew lattice envelope}, generalising the Boolean envelope of a bounded distributive lattice, cf. Proposition \ref{prp:Bool2}.

The results of this article relate concepts from different areas in mathematics with usually few interactions. The new perspective we propose is to consider \emph{sheaf representations} for ordered structures rather than for algebraic structures, where nonetheless we rely on \emph{monad theory} to incorporate change of topology. We have tried to supply enough details to allow interested readers to follow the main stream of ideas no matter to which of these areas they belong.  

The article is subdivided into four sections, the first two being mostly expository:

Section \ref{sct:bands} reviews idempotent semigroups with emphasis on regular and normal bands. We characterise right normal bands among right regular bands using a comprehensive factorisation system \cite{BK,SW}. This sheds light on an important theorem of Kimura-Yamada \cite{Ki,YK}: every right normal band may be represented as the category of elements of a canonical presheaf on its universal semilattice quotient.

Section \ref{sct:skew} introduces distributive bands and relates them to distributive skew lattices in the sense of Leech \cite{Le1}. We review Leech's equational characterisation \cite{Le3} of distributive skew lattices among normal skew lattices using a new characterisation of symmetric skew lattices among normal skew lattices.

Section \ref{sct:duality} contains our first main result: a dual equivalence between the category of global sheaves on spectral spaces and the category of right distributive bands. When restricted to terminal sheaves this recovers classical Stone duality \cite{Stone37}.

Section \ref{sct:patch} investigates the patch monad. Its algebras are related to distributive skew lattices on one side, and to Priestley sheaves with saturated support on the other. We give \emph{two} proofs of the equivalence with saturated Priestley sheaves: an ``abstract'' proof based on Duskin's monadicity criterion \cite{D}, and a ``constructive'' proof exhibiting a quasi-inverse to the Eilenberg-Moore comparison functor.

Throughout the text we denote bands by $X,Y,$ spectral spaces by $\Xx,\Yy,$ their compact open subsets by $U,V,$ and their dual lattices by $\KO(\Xx),\KO(\Yy)$. Sheaves will be denoted by $\Ff,\Gg,$ the local sections of a sheaf by $s,t$. A sheaf is called global (resp. globally supported) if it has global sections (resp. global support).

\subsection*{Acknowledgements} We are grateful to Maria Manuel Clementino, Richard Garner and Steve Lack for useful discussions concerning Duskin's monadicity theorem.

\section{Regular and normal bands}\label{sct:bands}

Idempotent semigroups are usually called \emph{bands}. We are mainly interested in a special class of normal bands which we call \emph{distributive} because of their close relationship with distributive skew lattices in the sense of Leech \cite{Le1}. Our definition of distributive band, to be presented in Section \ref{sct:skew}, relies on Kimura's representation theorem for \emph{normal bands} \cite{Ki}. For the reader's convenience we provide here a self-contained introduction to regular and normal bands, respectively.

We will often restrict ourselves to \emph{right regular bands}. This choice is a matter of convention; left regular bands would do as well. The main novelty of this introductory section is that we relate Kimura's representation theorem for normal bands to the existence of an orthogonal factorisation system for maps of right regular bands, thus placing Kimura's result in a wider perspective. This is done by lifting the \emph{comprehensive factorisation} system of Street and Walters \cite{SW,BK} from partially ordered sets to right regular bands showing that any morphism of right regular bands factors essentially uniquely as a connected map followed by a covering. The \emph{normal bands} are then identified with those regular bands whose semilattice reflection is a covering. In particular, any right normal band can be realised as the \emph{category of elements} of the presheaf given by the covering of its universal semilattice quotient.
\subsection{Preliminaries on bands}
\begin{dfn}
Let $X$ be a band. We define a relation $\leq$ on $X$ by
\[
x\leq y\quad \iff\quad x=yxy \quad \iff\quad x=yx=xy.
\]
Further, we define a relation $\preceq$ on $X$ by
\[
x\preceq y\quad \iff\quad x=xyx.
\]
Let $\Dd$ be the symmetric part of $\preceq$. That is, the relation given by
\[
x\Dd y   \quad \iff\quad    x\preceq y\text{ and }y\preceq x\iff x=xyx\text{ and }y=yxy.
\]
\end{dfn}

\begin{lma}\label{lma:idemGreen}Let $X$ be a band and $x,y,z\in X$.
\begin{itemize}\item[(i)]The relation $\leq$ is a partial order on $X$. It is preserved by semigroup homomorphisms and  a two-sided $0$ for $X$ is the same as a bottom of $(X,\leq)$.
\item[(ii)]The relation $x\preceq y$ holds if and only if there exist $s,t\in X$ with $x=syt$;\item[(iii)]The relation $x\preceq y$ is a preorder such that $xz\preceq yz$ and $zx\preceq zy$.\end{itemize}
\end{lma}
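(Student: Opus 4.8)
The plan is to draw everything from idempotency $xx=x$ together with the second description $x\leq y\iff x=yx=xy$ of the order, which I would verify first: if $x=yx=xy$ then $yxy=(yx)y=xy=x$, while conversely $x=yxy$ gives $yx=y(yxy)=(yy)xy=yxy=x$ and symmetrically $xy=x$, each step using only $yy=y$. For part (i), reflexivity is $x=xxx$; antisymmetry follows since $x\leq y$ and $y\leq x$ give $x=xy$ and $y=xy$, hence $x=y$; transitivity follows by computing $zx=z(yx)=(zy)x=yx=x$ and $xz=(xy)z=x(yz)=xy=x$ from the defining equations $x=yx=xy$ and $y=zy=yz$ (using $zy=y$, $yz=y$). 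Preservation under a homomorphism $f$ is immediate upon applying $f$ to $x=yx=xy$. Finally a two-sided $0$ is by definition an element with $0x=x0=0$ for all $x$, which is literally the assertion $0\leq x$ for all $x$, i.e. that $0$ is a bottom of $(X,\leq)$; conversely a bottom satisfies exactly these equations.

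For part (ii) the forward implication is trivial, taking $s=t=x$ in $x=xyx$. The substantive direction is that $x=syt$ forces $x=xyx$. Here I would first record the two absorptions $sx=x$ and $xt=x$, each an instance of idempotency ($s\cdot syt=(ss)yt=syt$, and dually). The remaining content is the band identity $sytysyt=syt$, which I would settle by idempotent manipulation; equivalently it is the observation that $s\,y\,t$ and $s\,y\,t\,y\,s\,y\,t$ have the same Green--Rees normal form in the free band (same content $\{s,y,t\}$, same longest proper prefix $sy$, same longest proper suffix $yt$). This word identity is the only place in (ii) needing more than a one-line computation.

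For part (iii), reflexivity is once more $x=xxx$. For transitivity I would exploit (ii): if $x=xyx$ and $y=yzy$, then substituting the middle relation gives $x=x(yzy)x=(xy)z(yx)$, which is of the form $s\,z\,t$, so (ii) yields $x\preceq z$ at once. The two compatibility statements are the crux. By definition $xz\preceq yz$ means $xzyzxz=xz$ and $zx\preceq zy$ means $zxzyzx=zx$; alternatively, via (ii), writing $x=xyx$ reduces $xz\preceq yz$ to exhibiting $xz\in X(yz)X$, and the required factorisation amounts to the identity $xyzxz=xz$ (whence $xz=(xy)(yz)(xz)$), with the left-hand dual $zxzyx=zx$. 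I would verify these by idempotent manipulation; the cleaner structural route is to note that $\preceq$ is the pullback along the semilattice reflection $X\to X/\Dd$ of the semilattice order, since $x\preceq y\iff x\,\Dd\,xy\iff\overline{xy}=\bar x\iff\bar x\leq\bar y$, after which compatibility is just monotonicity of the commutative idempotent multiplication on the quotient.

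I expect the main obstacle to be precisely these word identities --- $sytysyt=syt$ in (ii) and $xzyzxz=xz$ together with its dual in (iii) --- because naive rewriting by $x\leftrightarrow xyx$ tends to cycle rather than terminate. The surest remedies are either the Green--Rees normal form for the free band or the passage to the semilattice reflection $X/\Dd$; the latter makes the compatibility in (iii) entirely transparent, at the cost of first identifying $\Dd$ as a congruence whose quotient is a semilattice, which I would want to check is not logically downstream of the present lemma.
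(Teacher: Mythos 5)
Parts (i) and (ii) of your proposal are correct and close to the paper's own argument. For (ii) the paper avoids Green--Rees entirely: from $x=syt$ it computes $xyt=(syt)yt=s(yt)(yt)=syt=x$ and then $xyx=xy(xyt)=(xy)(xy)t=xyt=x$, a two-line idempotent manipulation; your normal-form check of $sytysyt=syt$ (same content, same $0$- and $1$-parts $sy$ and $yt$) is valid but heavier than needed. Your transitivity argument for $\preceq$ --- substitute $y=yzy$ to get $x=(xy)z(yx)$ and apply (ii) --- is exactly the intended use of (ii).

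The compatibility claim in (iii) is where your proof has a genuine gap. The identity you reduce it to, $xyzxz=xz$ under the hypothesis $x=xyx$, is \emph{false} in a general band. In the free band on $\{a,b,c\}$ take $x=aba$, $y=b$, $z=c$: then $xyx=(abab)aba=(ab)aba=ababa=aba=x$, so the hypothesis holds, yet $xyzxz=abcabac$ while $xz=abac$, and these are distinct elements of the free band because their longest prefixes not containing $c$ are $ab$ and $aba$ respectively, which are distinct in the free band on $\{a,b\}$ --- this follows from the very Green--Rees criterion you invoke. (The identity \emph{does} hold in left regular and in right regular bands, which is presumably why rewriting seemed close to succeeding; the dual identity $zxzyx=zx$ fails in the same example.) So no idempotent manipulation can close this step. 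Your fallback route through the semilattice reflection is circular, exactly as you feared: in the paper, the fact that $\Dd$ is a congruence with semilattice quotient is Proposition~\ref{prp:basic}, whose proof cites the compatibility of $\preceq$ established in the present lemma, and any direct proof that $\Dd$ is a congruence amounts to redoing this same computation. The correct argument (the paper's) proceeds in two steps: from $x=xyx$ one has $xz=xyxz=(x)y(xz)$, so $xz\preceq y$ by (ii), i.e. $xz=(xz)y(xz)$; then inserting idempotency of $zy$ gives
\[
xz=x(zy)(xz)=x(zy)(zy)(xz)=(xz)(yz)(yxz),
\]
an expression of the form $s(yz)t$ with $s=xz$ and $t=yxz$, whence $xz\preceq yz$ by (ii). Note that the correct flanking factors are $xz$ and $yxz$, not your $xy$ and $xz$.
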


\begin{proof}
(i) It is clear from the definition of $\leq$ that it is reflexive and antisymmetric, that it is preserved by semigroup homomorphisms and that an element is the two-sided $0$ of $X$ if and only if it is the bottom of $(X,\leq)$. Finally, if $x\leq y$ and $y\leq z$ then $x=xy=yx$ and $y=zy=yz$, and thus $zx=z(yx)=(zy)x=yx=x$ and $xz=(xy)z=x(yz)=xy=x$ so that $x\leq z$ and $\leq$ is transitive.

(ii) If $x\preceq y$ then $x=xyx$ and thus we may take $s=t=x$. Conversely, suppose $x=syt$. Then we have
\[
xyt=(syt)yt=s (yt)(yt)=syt=x
\]
and thus
\[
xyx=xy(xyt)=(xy)(xy) t=xyt=x.
\]

(iii) By idempotence of the band operation the relation $\preceq$ is reflexive and by (ii) it is transitive. Thus $\preceq$ is a preorder. But $x\preceq y$ means $x=xyx$ so that for any $z\in X$ we have $xz=xyxz=(x) y (xz)$. By (ii) we get $xz\preceq y$ and thus
\[
xz=(xz)y(xz)=x(zy)(xz)=x(zy)(zy)(xz)=(xz) yz (yxz)
\]
whence $xz\preceq yz$ again by (ii). The relation $zx\preceq zy$ follows symmetrically.\end{proof}

\begin{rmk}
The preorder $\preceq$ is usually known as the $\Jj$-preorder and its symmetric part is known as Green's $\Jj$-relation. In any idempotent semigroup Green's $\Jj$- and  $\Dd$-relations coincide -- whence our notation.
\end{rmk}

We recall a few standard definitions. A commutative band is called a \emph{semilattice}. A semilattice carries a partial order given by $x\leq y$ if and only if $x=xy=yx$. For arbitrary elements $x$ and $y$ in the semilattice, the infimum of $x$ and $y$ for this partial order is given by $xy=yx$. Accordingly we will denote the semigroup operation in a semilattice by $\wedge$. A band is called \emph{rectangular} provided $X/\Dd$ is singleton or empty.
A subset $S$ of a partially ordered set is said to be \emph{order-discrete} provided the restricted partial order on $S$ is discrete, \emph{i.e.} for all $x,y\in S$, if $x\leq y$ then $x=y$.

The following proposition collects a number of well-known facts about semigroups. We include proofs for readers not familiar with semigroup theory.

\begin{prp}\label{prp:basic}For any band $X$ the following properties hold:\begin{itemize}
\item[(i)] The $\Dd$-relation is a semigroup congruence on $X$;
\item[(ii)]The quotient $X/\Dd$ is the universal semilattice quotient of $X$;
\item[(iii)]For $x,y\in X$, if $x\leq y$ then $x\preceq y$, and if $x\leq y\text{ and }y\preceq x$ then $x=y$.
\item[(iv)]$(X,\leq)$ is a partially ordered set in which $\Dd$-classes are order-discrete;
\item[(v)] The quotient map $(X,\leq)\to(X/\Dd,\leq)$ is order preserving. More precisely,
\[
x\leq y\implies x\preceq y\iff[x]_\Dd\preceq[y]_\Dd\iff[x]_\Dd\leq[y]_\Dd.
\]
\item[(vi)]$X$ is rectangular if and only if $(X,\leq)$ is order-discrete.
\end{itemize}\end{prp}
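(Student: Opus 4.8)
The plan is to derive all six items directly from the definitions of $\leq$, $\preceq$ and $\Dd$ together with Lemma \ref{lma:idemGreen}; none of the parts is deep, but the logical order matters since the later items reuse the earlier ones. I would begin with (i). Since $\Dd$ is by definition the symmetric part of the preorder $\preceq$, it is automatically an equivalence relation, so only compatibility with the multiplication needs checking. This is immediate from Lemma \ref{lma:idemGreen}(iii): if $x\preceq x'$ and $y\preceq y'$ then $xy\preceq x'y\preceq x'y'$, and symmetrically, so $x\Dd x'$ and $y\Dd y'$ yield $xy\Dd x'y'$.

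For (ii) I would first show that $X/\Dd$ is a semilattice, i.e. commutative, by verifying $xy\Dd yx$ for all $x,y$. The key identity is $(xy)(yx)(xy)=xy$, obtained by expanding and using idempotence, which gives $xy\preceq yx$; symmetrically $yx\preceq xy$. Universality then follows because any homomorphism $f\colon X\to S$ into a semilattice collapses $\Dd$: if $x\Dd y$ then $x=xyx$ and $y=yxy$, and in the commutative idempotent $S$ one has $f(x)=f(x)f(y)f(x)=f(x)f(y)=f(y)f(x)f(y)=f(y)$; hence $f$ factors uniquely through the quotient map $X\to X/\Dd$, which is exactly the required universal property.

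The order-theoretic items (iii)--(v) I would treat together. The first claim of (iii), that $x\leq y$ implies $x\preceq y$, follows from $xyx=x(yx)=x\cdot x=x$ using $yx=x$; the second, that $x\leq y$ together with $y\preceq x$ forces $x=y$, follows from $y=yxy=y(xy)=yx=x$ using $xy=x$ and $yx=x$. Item (iv) is then immediate: $(X,\leq)$ is a poset by Lemma \ref{lma:idemGreen}(i), and if $x\Dd y$ with $x\leq y$ then $y\preceq x$, whence $x=y$ by (iii), so $\Dd$-classes are order-discrete. For (v) I would check that $\preceq$ descends to $X/\Dd$ (well-definedness on classes holds since $x\Dd x'$ gives both $x\preceq x'$ and $x'\preceq x$), giving $x\preceq y\iff[x]_\Dd\preceq[y]_\Dd$, and then observe that on the semilattice $X/\Dd$ the relations $\preceq$ and $\leq$ coincide: commutativity and idempotence give $aba=ab$, so $a\preceq b\iff a=ab\iff a\leq b$. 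Combined with the first implication of (iii), this assembles the displayed chain.

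Finally, for (vi) the forward direction is a consequence of (iv): if $X$ is rectangular there is a single $\Dd$-class (or none), which is order-discrete. The converse is the one point requiring a genuine observation: for arbitrary $x,y$ one always has $xyx\leq x$, since $x\cdot xyx=xyx=xyx\cdot x$ by idempotence. Order-discreteness of $(X,\leq)$ then forces $xyx=x$, i.e. $x\preceq y$, and symmetrically $y\preceq x$, so $x\Dd y$; as $x,y$ were arbitrary, $X/\Dd$ is a singleton (or $X$ is empty), and $X$ is rectangular. I expect this converse to be the main step needing an idea rather than bookkeeping, the remainder being direct substitution into the defining identities.
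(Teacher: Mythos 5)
Your proof is correct and follows essentially the same route as the paper: both derive everything from Lemma \ref{lma:idemGreen} (congruence via the compatible preorder $\preceq$, commutativity of the quotient via $xy=xy(yx)xy$, the order facts by direct substitution, and rectangularity via $xyx\leq x$). The only cosmetic difference is that you invoke Lemma \ref{lma:idemGreen}(i) for transitivity of $\leq$ in (iv), where the paper redundantly re-proves it; this changes nothing of substance.
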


\begin{proof}
(i) By Lemma~\ref{lma:idemGreen}ii, the $\Dd$-relation is the symmetric part of the compatible preorder $\preceq$ and hence a semigroup congruence.

(ii) According to (i) the quotient map $X\to X/\Dd$ is a surjective semigroup homomorphism. The quotient band is commutative since $xy\Dd yx$ by
\[xy=xyxy=xy(yx)xy\text{ and }yx=yxyx=yx(xy)yx.\]
Thus $X/\Dd$ is a semilattice. All that remains to be shown is the universal property. To this end, let $h\colon X \to L$ be a map of bands into a semilattice and let $x,y\in X$ with $x\Dd y$. Then $h(x)=h(xyx)=h(x)\wedge h(y)$ and thus $h(x)\leq h(y)$. Similarly $y=yxy$ implies $h(y)\leq h(x)$ so that $h(x)=h(y)$. Therefore $\Dd$ is contained in the kernel of $h$, and $h$ factors through the quotient map $X\to X/\Dd$ as required.

(iii) If $x\leq y$ then $x=xy$ and hence $x=xx=xyx$ so that $x\preceq y$. Further, if $x\leq y$ and $y\preceq x$ then $x=yxy$ and $y=yxy$ so that $x=y$.

(iv) According to (iii) the relation $\leq$ is antisymmetric. Idempotence of the band operation implies reflexivity. Using that $x\leq y$ if and only if $x=xy=yx$, and that $y\leq z$ if and only if $y=yz=zy$, we get $x=xy=x(yz)=(xy)z=xz$ and $x=yx=(zy)x=z(yx)=zx$ thus $x\leq z$, that is transitivity.

It follows from (iii) that $\Dd$-classes are order-discrete.

(v) The left and middle implications from left to right follow from (iii) and (i).

The rightmost equivalence follows from commutativity of the quotient $X/\Dd$.

If $[x]\preceq[y]$ then there exist $x',y'$ such that $x\Dd x'$ and $y\Dd y'$ and $x'\preceq y'$. This implies $x\preceq y$ by transitivity of $\preceq$ and the fact that $\Dd$ is the symmetric part of $\preceq$. Therefore the middle implication is an equivalence too.

(vi) If $X$ is rectangular then $X$ is either empty or consists of a single $\Dd$-class which is order-discrete by (iv). If $(X,\leq)$ is order-discrete then the identity $x(xyx)x=xyx$ implies $xyx\leq x$ and hence $xyx=x$ so that $X$ is rectangular.\end{proof}

Property (ii) says that commutative bands form a \emph{full reflective subcategory} of the category of all bands. Commutative bands are \emph{meet-semilattices} with respect to the partial order $\leq$. However, one should be careful: though the reflection $X\to X/\Dd$ takes the product $xy$ to the meet $[x]\wedge[y]$ in $X/\Dd$, it is possible that the meet $x\wedge y$ in $(X,\leq)$ exists but nevertheless $x\wedge y\not=xy$, cf. Lemma~\ref{lma:commuting} below.
\subsection{Regular bands}

Let $X$ be a band and $z\in X$. Note that the order-ideal generated by $z$ is given by
\[
{\downarrow} z=\{x\in X\mid x\leq z\}=zXz.
\]
This description shows that ${\downarrow} z$ is a subband of $X$ and that we have a canonical `projection map'  $p_z\colon X\to {\downarrow} z$ given by $p_z(x)=zxz$. These projection maps are not in general semigroup homomorphisms, but a band $X$ is called \emph{regular} provided the projections $x\mapsto zxz$ onto the order-ideals ${\downarrow}z=zXz$ are homomorphisms for all $z$, i.e. for all $x,y,z\in X$ we have $zxzyz=zxyz$.
A band $X$ is called \emph{right} (resp. \emph{left}) \emph{regular} if $xyx=yx$ (resp. $xy=xyx$) for all $x,y$ in $X$.

For any semigroup $X$, Green's $\Ll$- and $\Rr$-relations are defined by $x\Ll y$ if and only if $Xx=Xy$, resp. $x\Rr y$ if and only if $xX=xY$. By the same argument as in the proof of Lemma~\ref{lma:idemGreen}, in case $X$ is a band, the $\Ll$-relation amounts to the conjunction of $xy=x$ and $yx=y$, and the $\Rr$-relation to the conjunction of $xy=y$ and $yx=x$. As for any semigroup, the $\Ll$-relation is always a right congruence and the $\Rr$-relation a left congruence. It is not too difficult to show that a band $X$ is regular if and only if $\Ll$ and $\Rr$ are both semigroup congruences. The quotient band $X/\Ll$ is actually the universal \emph{right regular} reflection of $X$, while the quotient band $X/\Rr$ is the universal \emph{left regular} reflection of $X$.

Further, since $\Ll$ and $\Rr$ are contained in $\Dd$, we get a commutative diagram
  $$\xymatrix{X\ar[r]\ar[d] &X/\Ll\ar[d]\\X/\Rr\ar[r] & X/\Dd}$$

For any band, the product map $X\to X/\Ll\times X/\Rr$ is injective and for any $x,y$ with $x\Dd y$, one has $x\Ll yx$ and $y\Rr yx$ so that $([y]_\Rr,[x]_\Ll)=([yx]_\Rr,[yx]_\Ll)$. The square above is thus a pullback square of underlying sets. In particular, if $X$ is a regular band then the square is a pullback square of semigroups, i.e. $X\cong X/\Rr\times_{X/\Dd}X/\Ll$ in the category of semigroups. This important fact was originally proved by Kimura \cite{Ki} and often allows one to obtain general results for regular bands by establishing them just for left (resp. right) regular bands, thus simplifying arguments.

Since a \emph{rectangular} band $X$ is a regular band with a single $\Dd$-class, the pullback above produces for each rectangular band $X$ a canonical decomposition $$X\cong X/\Rr\times X/\Ll$$
into the product of a \emph{left rectangular} band $X/\Rr$ (i.e. $[x]_\Rr[y]_\Rr=[x]_\Rr$) and a \emph{right rectangular} band $X/\Ll$ (i.e. $[x]_\Ll[y]_\Ll=[y]_\Ll$). Since the binary operation of a left (resp. right) rectangular band is uniquely determined, the same is true for any rectangular band.

From hereon out, we shall restrict our attention to bands that are (at least) regular. In fact, since $X\cong X/\Rr\times_{X/\Dd}X/\Ll$ for any such band, we will mainly focus on the categories $\LRB$ of left regular bands and $\RRB$ of right regular bands, respectively. We will need the following general lemma about regular bands.

\begin{lma}\label{lma:commuting}If two elements $x,y$ of a regular band $X$ commute then the product $xy$ represents the meet $x\wedge y$ in $(X,\leq)$. Conversely, if there exists $z\in X$ such that $z\leq x$ and $z\leq y$ and $[z]_\Dd=[x]_\Dd\wedge[y]_\Dd$ then $xy=z=yx$.\end{lma}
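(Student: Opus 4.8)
The plan is to handle the two implications separately, in each case reducing everything to the characterisation $a \le b \iff a = ab = ba$ of the order $\le$ together with the absorption identities it supplies. Notably, I do not expect to need regularity of $X$ at all: both halves should go through for an arbitrary band, the regularity hypothesis merely fixing the ambient setting of the section.

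For the forward implication I would assume $xy = yx$ and first check that $xy$ is a common lower bound of $x$ and $y$. By idempotence $x(xy) = (xx)y = xy$, and $(xy)x = x(yx) = x(xy) = xy$ using commutativity, so $xy \le x$; the argument for $xy \le y$ is symmetric. To see that $xy$ is the \emph{greatest} lower bound, I would take any $w$ with $w \le x$ and $w \le y$, so that $wx = xw = w$ and $wy = yw = w$, and compute $w(xy) = (wx)y = wy = w$ together with $(xy)w = x(yw) = xw = w$, giving $w \le xy$. Hence $xy$ is by definition the meet $x \wedge y$ in $(X,\le)$.

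For the converse I would suppose $z \le x$, $z \le y$ and $[z]_\Dd = [x]_\Dd \wedge [y]_\Dd$. Since the reflection $X \to X/\Dd$ sends the product $xy$ to the meet $[x]_\Dd \wedge [y]_\Dd$ in the universal semilattice quotient (Proposition \ref{prp:basic}(ii)), the hypothesis reads $[z]_\Dd = [xy]_\Dd = [yx]_\Dd$, that is $z \Dd xy$ and $z \Dd yx$. The crucial observation is that $z$ lies \emph{below} $xy$: from $zx = xz = z$ and $zy = yz = z$ one gets $z(xy) = (zx)y = zy = z$ and $(xy)z = x(yz) = xz = z$, so $z \le xy$, and symmetrically $z \le yx$.

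The last step, which is the real content, is then to combine $z \le xy$ with $z \Dd xy$: by Proposition \ref{prp:basic}(iv) the $\Dd$-classes of $X$ are order-discrete, so a comparison $\le$ between two $\Dd$-equivalent elements is forced to be an equality, whence $z = xy$ and likewise $z = yx$. I expect the main subtlety to be exactly this recognition that the converse does not call for re-establishing a universal greatest-lower-bound property, but only for placing $z$ beneath the product inside a single $\Dd$-class, at which point order-discreteness does the rest.
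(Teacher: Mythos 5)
Your proof is correct and follows essentially the same route as the paper's: both halves reduce to the characterisation $z\leq w \iff z=zw=wz$, with the converse obtained by placing $z$ below $xy$ and then invoking the order-discreteness of $\Dd$-classes (the paper cites Proposition \ref{prp:basic}iii directly, of which your appeal to part (iv) is an immediate restatement). Your side remark is also accurate: the paper's argument, like yours, never uses regularity, so the lemma indeed holds in any band.
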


\begin{proof}If $xy=yx$ then $xy\leq x$ and $xy\leq y$ and for $z\in X$ such that $z\leq x$ and $z\leq y$ we get $z=yxzxy=xyzxy$ thus $z\leq xy$. Conversely, if such a $z\in X$ satisfies $[z]_\Dd=[x]_\Dd\wedge[y]_\Dd=[xy]_\Dd=[yx]_\Dd$ then Proposition~\ref{prp:basic}iii implies $xy=z=yx$.\end{proof}

\subsection{Comprehensive factorisation}
\label{sct:comprehension}

Street-Walters \cite{SW} construct a comprehensive factorisation of any functor between small categories, see also Par\'e \cite{Par}. On the other hand, in \cite{BK} a general theory of comprehensive factorisation systems is established, inspired by the fact that any continuous map between locally simply connected topological spaces admits a formally analogous factorisation based on topological covering theory. As in \cite{BK}, our terminology is inspired by this example.

\emph{Partially ordered sets} (posets for short) can be considered as small categories in which relations $x\leq y$ are morphisms with domain $x$ and codomain $y$. Reflexivity of the order relation yields the identity morphisms, transitivity the compositions, and antisymmetry amounts to the property that the only invertible morphisms of the category are the identities. We will tacitly identify posets with the corresponding small categories, which we will refer to as posetal categories. Note also that the functors between such posetal categories are precisely the order preserving maps. Our purpose here is to show that Street-Walters' comprehensive factorisation of an order preserving map ``lifts'' to a comprehensive factorisation of a map of left (resp. right) regular bands. Let us make this statement more precise.

\begin{dfn}
A map of posets $f:(X,\leq)\to(Y,\leq)$ is said to be a \emph{covering} (aka a \emph{discrete fibration}) if for each $x\in X$ and $y'\leq f(x)$ there exists a unique $x'\leq x$ such that $f(x')=y'$.
\end{dfn}
\begin{rmk}\label{cover=elementproj}
The best way to understand covering maps is on the basis of \emph{presheaves}. In fact, any covering $f:X\to Y$ is isomorphic (over $Y$) to the projection map $\el_Y(\Ff)\to Y$ where the source $\el_Y(\Ff)$ is the poset of elements of an, up to isomorphism, uniquely determined set-valued presheaf $\Ff:(Y,\leq)^\op\to\Set$.

Given a set-valued presheaf $F:(Y,\leq)^\op\to\Set$ the poset $\el_Y(F)$ is the poset whose elements are the pairs $(y,s)$ consisting of an element $y\in Y$ and an element $s\in F(y)$ and with the partial order $(y,s)\leq(z,t)$ precisely when $y\leq z$ and $F(y\leq z)$ takes $t$ to $s$. Further, the projection $\pi:\el_Y(F)\to Y$ taking $(y,s)$ to $y$ is a covering. Conversely, for a given covering $f:(X,\leq)\to(Y,\leq)$ the \emph{representing presheaf} $F_f$ is defined by $F_f(y)=f^{-1}(y)$ with obvious action maps $F_f(y'\leq y):f^{-1}(y)\to f^{-1}(y')$ given by the covering property.
\end{rmk}

We denote by $PX$ the category of set-valued presheaves on $(X,\leq)$. A map of posets $f:(X,\leq)\to(Y,\leq)$ is \emph{connected} (aka \emph{final}) provided (cf. \cite{BK,SW}) the \emph{left Kan extension} along $f:(X,\leq)\to(Y,\leq)$ takes a terminal presheaf $\star_{PX}$ on $(X,\leq)$ to a terminal presheaf $f_!(\star_{PX})$ on $(Y,\leq)$. A terminal presheaf $\star_{PX}$ sends each element of $X$ to a singleton set with the obvious restriction maps. Also, one may show that the set-valued presheaf $f_!(\star_{PX})$ assigns to $y$ in $Y$ the set $\pi_0(y{\downarrow} f)$ of connected components of the poset $y{\downarrow} f$ where the posetal comma category $y{\downarrow} f$ has as objects those $x\in X$ for which $y\leq f(x)$ with partial order induced by $(X,\leq)$.

Having unraveled the definitions, we see that a map of posets $f:(X,\leq)\to(Y,\leq)$ is \emph{connected} if and only if, for each $y\in Y$, the poset $y{\downarrow} f=\{x\in X\,|\,y\leq f(x)\}$ is a non-empty connected subposet of $(X,\leq)$.

The \emph{comprehensive factorisation} of an order preserving map $f:(X,\leq)\to(Y,\leq)$ can now be described  explicitly as
\[
X\overset{\alpha_f}{\longrightarrow}\el_Y(F)\overset{\beta_f}{\longrightarrow}Y.
\]
Here $F=f_!(\star_{PX})$ and $\alpha_f(x)=(f(x),[f(x)]_{f(x){\downarrow} f})$ where $[f(x)]_{f(x){\downarrow} f}$ denotes the connected component of $f(x)$ inside the poset $f(x){\downarrow} f$. The map $\beta_f$ is the projection given by $\beta_f((y,s))=y$.

This defines an \emph{orthogonal} factorisation system for the category of posets with left part consisting of the connected maps and right part consisting of the coverings, cf. \cite{SW,BK}. We shall now show that this factorisation system ``lifts'' along the \emph{forgetful} functor $U:\LRB\to\Poset$ (resp. $U:\RRB\to\Poset$) taking a left (resp. right) regular band $X$ to its underlying poset $UX=(X,\leq)$ where $x\leq x'$ if and only if $x=x'xx'$, cf. Section~\ref{sct:bands}. Such a ``lifting'' means that for each map of left (resp. right) regular bands $f:X\to Y$ there exists an essentially unique factorisation $f=\beta_f\alpha_f$ such that $U\alpha_f=\alpha_{Uf}$ and $U\beta_f=\beta_{Uf}$.

\begin{lma}\label{lma:elements}For any left (resp. right) regular band $Y$ and set-valued presheaf $F:(Y,\leq)^\op\to\Set$ the poset of elements $\el_Y(F)$ underlies a unique structure of left (resp. right) regular band such that the projection map $\el_Y(F)\to Y$ is a band homomorphism. The induced semilattice map $\el_Y(F)/\Dd\hookrightarrow Y/\Dd$ is injective and identifies the domain with a downset of the codomain.\end{lma}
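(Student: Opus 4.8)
The plan is to reduce everything to the right regular case (the left regular case being perfectly dual) and to base all computations on the single structural fact that in a right regular band one has $ab\leq b$ for all $a,b$: indeed $b(ab)=bab=ab$ and $(ab)b=ab$, so $ab=b(ab)=(ab)b$. This inequality is the crux, since it makes the restriction map $F(yz\leq z)\colon F(z)\to F(yz)$ both available and canonical. Accordingly I would define the candidate operation on $\el_Y(F)$ by
\[
(y,s)\cdot(z,t):=(yz,\,F(yz\leq z)(t)),
\]
which lands in $\el_Y(F)$ precisely because $yz\leq z$. Idempotence is immediate from $F(y\leq y)=\id$, and the projection $\pi\colon\el_Y(F)\to Y$ is a band homomorphism by construction.

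Next I would verify that $\cdot$ is an associative right regular operation. For associativity both bracketings of $(y,s),(z,t),(w,u)$ have first coordinate $yzw$, and their second coordinates agree by presheaf functoriality along the chain $yzw\leq zw\leq w$ (each inequality being an instance of $ab\leq b$). Right regularity of $\el_Y(F)$ follows from $yzy=zy$ in $Y$: the element $(y,s)(z,t)(y,s)$ has first coordinate $yzy=zy$ and second coordinate $F(zy\leq y)(s)$, which is exactly $(z,t)(y,s)$. I would also record that the band order induced by $\cdot$ coincides with the given poset order, since unwinding $(y,s)=(z,t)(y,s)=(y,s)(z,t)$ forces $y\leq z$ together with $s=F(y\leq z)(t)$. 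For uniqueness, any right regular band structure $*$ on the poset $\el_Y(F)$ making $\pi$ a homomorphism must satisfy $(y,s)*(z,t)=(yz,c)$ for some $c\in F(yz)$; by $ab\leq b$ this product lies below $(z,t)$ in the band order, which is the given poset order, so $(yz,c)\leq(z,t)$ forces $c=F(yz\leq z)(t)$, reproving the formula.

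For the semilattice statement I would study the induced homomorphism $\bar\pi\colon\el_Y(F)/\Dd\to Y/\Dd$ (a semilattice map by Proposition~\ref{prp:basic}(ii)). The key claim is $(y,s)\Dd(z,t)\iff y\Dd z$: the forward direction holds because $\pi$ preserves $\Dd$, and for the converse, if $yzy=y$ then $(y,s)(z,t)(y,s)=(yzy,F(yzy\leq y)(s))=(y,s)$, and symmetrically $(z,t)(y,s)(z,t)=(z,t)$, whence $(y,s)\Dd(z,t)$. Thus $\bar\pi$ is injective, with image the set of classes $[y]_\Dd$ admitting a representative with $F(y)\neq\emptyset$. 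To see this image is a downset, take such a representative with $F(y)\neq\emptyset$ and suppose $[w]_\Dd\leq[y]_\Dd$, i.e. $[wy]_\Dd=[w]_\Dd$; since $wy\leq y$, the restriction $F(wy\leq y)\colon F(y)\to F(wy)$ gives $F(wy)\neq\emptyset$, so $[w]_\Dd=[wy]_\Dd$ lies in the image. A downset of a meet-semilattice is closed under meets, hence a subsemilattice, and a bijective semilattice homomorphism onto it is an isomorphism; therefore $\bar\pi$ identifies $\el_Y(F)/\Dd$ with a downset of $Y/\Dd$.

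I expect the main obstacle to lie not in any individual computation but in organising the associativity, uniqueness, and downset arguments so that each collapses to a one-line use of either presheaf functoriality or the right regular identities $ab\leq b$ and $yzy=zy$; once the inequality $ab\leq b$ is isolated as the governing principle, the verifications become routine rather than casework.
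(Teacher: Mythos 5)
Your proposal is correct and takes essentially the same approach as the paper: the same forced multiplication $(y,s)(z,t)=(yz,F(yz\leq z)(t))$ obtained from the right-regular inequality $ab\leq b$, the same verification of right regularity, the same computation showing $(y,s)\Dd(z,t)$ iff $y\Dd z$, and the same observation that the image consists of the classes meeting $\{y : F(y)\neq\emptyset\}$ and forms a downset. The only differences are expository: you spell out associativity and the uniqueness argument in full, where the paper compresses uniqueness into an appeal to the covering property of the projection $\el_Y(F)\to Y$.
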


\begin{proof}We assume that $Y$ is a right regular band, the proof for left regular bands is dual. The poset of elements $\el_Y(F)$ has as objects pairs $(x,s)$ consisting of an element $x\in Y$ and an element $s\in F(x)$ with partial order $(x,s)\leq(y,t)$ precisely when $x\leq y$ and $F(x\leq y)$ takes $t$ to $s$. We write $s=t\rceil^y_x$. Since $Y$ is right regular, we have $xy\leq y$ so that the covering property of the projection map $\el_Y(F)\to Y$ forces us to put $(x,s)(y,t)=(xy,t\rceil^y_{xy})$. This defines indeed a right regular band structure on $\el_F(Y)$ since $$(x,s)(y,t)(x,s)=(xyx,s\rceil^x_{xyx})=(yx,s\rceil^x_{yx})=(y,t)(x,s).$$ Moreover $(x,s)\leq(y,t)$ if and only if $(x,s)(y,t)=(x,s)$ so that the partial order on $\el_Y(F)$ underlies the right regular band structure.
For right regular bands Green's $\Dd$-relation reads\begin{align*}(x,s)\Dd(y,t)&\iff(x,s)(y,t)=(y,t)\text{ and }(y,t)(x,s)=(x,s)\\&\iff xy=y\text{ and }yx=x\text{ and }(s,t)\in F(x)\times F(y)\\&\iff x\Dd y\text{ and }(s,t)\in F(x)\times F(y)\end{align*}so that the induced map $\el_Y(F)/\Dd\to Y/\Dd$ is injective. Finally, notice that the image consists of those $[y]_\Dd$ for which $F(y)\neq\emptyset$ and these form a downset. \end{proof}

\begin{dfn}\label{defcovering}A map of regular bands $f:X\to Y$ is said to be a \emph{covering} (resp. \emph{connected}) if the underlying map of posets $Uf:(X,\leq)\to(Y,\leq)$ is such.\end{dfn}

\begin{prp}\label{prp:factorisation}Any map of left (resp. right) regular bands factors essentially uniquely as a connected map followed by a covering. The resulting comprehensive factorisation system is orthogonal and stable under pullback along coverings.\end{prp}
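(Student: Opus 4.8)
The plan is to transport the comprehensive factorisation of posets along the forgetful functor $U\colon\RRB\to\Poset$, letting Lemma~\ref{lma:elements} supply the band structure on the middle term. I argue for right regular bands, the left regular case being dual. Given $f\colon X\to Y$ in $\RRB$, I first take the comprehensive factorisation of the underlying order preserving map, $UX\xrightarrow{\alpha_{Uf}}\el_Y(F)\xrightarrow{\beta_{Uf}}UY$ with $F=(Uf)_!(\star_{PX})$. By Lemma~\ref{lma:elements} the poset $\el_Y(F)$ carries a unique right regular band structure making the projection $\beta_{Uf}$ a homomorphism; this band map $\beta_f$ is a covering by Definition~\ref{defcovering}. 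Since $U$ is faithful and $\beta_{Uf}\alpha_{Uf}=Uf$, everything reduces to checking that $\alpha_{Uf}$ is a homomorphism: then $\alpha_f:=\alpha_{Uf}$ is connected by Definition~\ref{defcovering} and $f=\beta_f\alpha_f$ is the sought factorisation.

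Verifying that $\alpha_{Uf}$ preserves products is the step I expect to be \textbf{the main obstacle}, as it is the only genuinely non-formal point: the right factor is handed to us by Lemma~\ref{lma:elements}, whereas the left factor must be shown to lift from posets to bands. Recall that $\alpha_{Uf}(x)=(f(x),c_x)$, where $c_x$ is the connected component of $f(x){\downarrow}f$ containing $x$, and that $F(y)=\pi_0(y{\downarrow}f)$ with restriction maps induced by the inclusions $y{\downarrow}f\subseteq y'{\downarrow}f$ for $y'\le y$. Using the product of Lemma~\ref{lma:elements} and that $f$ is a homomorphism,
\[\alpha_{Uf}(x)\alpha_{Uf}(x')=\bigl(f(x)f(x'),\,c_{x'}\rceil^{f(x')}_{f(x)f(x')}\bigr)=\bigl(f(xx'),\,c_{x'}\rceil^{f(x')}_{f(xx')}\bigr).\]
Right regularity gives $xx'\le x'$, hence $f(xx')\le f(x')$, so the displayed restriction is the image of $c_{x'}$ under $\pi_0(f(x'){\downarrow}f)\to\pi_0(f(xx'){\downarrow}f)$, that is the component of $x'$ in $f(xx'){\downarrow}f$. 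But $xx'\le x'$ also places $xx'$ and $x'$ in one component of $f(xx'){\downarrow}f$, so this component equals $c_{xx'}$; therefore $\alpha_{Uf}(x)\alpha_{Uf}(x')=\alpha_{Uf}(xx')$.

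For orthogonality I take a commuting square with connected $i\colon A\to B$ on the left and covering $p\colon C\to D$ with lower edge $v\colon B\to D$ on the right. Faithfulness of $U$ reduces uniqueness of a diagonal to the poset level, where the orthogonal poset system yields a unique order preserving $d\colon UB\to UC$ with both triangles commuting; it remains to see that $d$ is a homomorphism. For $b,b'\in B$ both $d(bb')$ and $d(b)d(b')$ lie below $d(b')$ --- the former because $bb'\le b'$ and $d$ is monotone, the latter because a product lies below its right factor in a right regular band --- and $p$ sends both to $v(bb')$. Since $v(bb')\le v(b')=p(d(b'))$, the unique lifting property of the covering $p$ forces $d(bb')=d(b)d(b')$; this is the same device as in the previous paragraph. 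Essential uniqueness of the factorisation then follows from orthogonality in the usual way.

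Finally, for stability I show that pullbacks along a covering exist in $\RRB$ and are again coverings. Presenting a covering $p\colon C\to D$ as $\el_D(G)\to D$ via Remark~\ref{cover=elementproj}, for any $f\colon B\to D$ the composite presheaf $f^{*}G=G\circ(Uf)^{\op}$ on $(B,\le)$ gives, through Lemma~\ref{lma:elements}, a right regular band $\el_B(f^{*}G)$ whose underlying poset is the poset pullback $UB\times_{UD}UC$ and whose projection to $B$ is a covering. A routine check of the universal property --- using that $U$ is faithful and that all structure maps are forced to be homomorphisms --- identifies $\el_B(f^{*}G)$ with $B\times_D C$ in $\RRB$, so coverings are stable under pullback along coverings, as claimed.
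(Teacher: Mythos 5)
Your existence argument is the paper's own: factor the underlying poset map comprehensively, endow $\el_Y(F)$ with the unique band structure of Lemma~\ref{lma:elements}, and check that $\alpha_{Uf}$ is a homomorphism; your verification (the restriction of $c_{x'}$ along $f(xx')\le f(x')$ is the component of $x'$ in $f(xx'){\downarrow}f$, which equals $c_{xx'}$ because $xx'\le x'$) is exactly the paper's computation, spelled out. Your orthogonality argument is correct and in fact \emph{more} detailed than the paper's: the paper only invokes orthogonality for posets plus faithfulness of $U$, while you supply the genuinely needed step that the poset-level diagonal is a band homomorphism, using the unique-lifting property of the covering $p$ applied to the two elements $d(bb')$ and $d(b)d(b')$ below $d(b')$. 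That check is a real (if small) gap in the paper's own wording, and you close it.

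The stability claim, however, is where you have a genuine gap: you prove the wrong half of the statement. Your last paragraph shows that \emph{coverings} are stable under pullback, but this is the automatic part: $\RRB$ is a variety (defined by $x^2=x$ and $xyx=yx$), so all pullbacks exist, and the right class of any orthogonal factorisation system is always closed under pullback -- your explicit model $\el_B(f^{*}G)$ is a nice description of such a pullback but adds nothing beyond this formality. What the proposition actually asserts -- and what the paper proves -- is that the \emph{left} class is preserved: the pullback of a \emph{connected} map along a covering is again connected. This is the substantive content, precisely because comprehensive factorisation systems are not stable under arbitrary pullbacks. The paper's route is to quote the poset case (\cite[Remark 1.11]{BK}) and transfer it to $\RRB$ using that $U\colon\RRB\to\Poset$ preserves and reflects pullbacks, coverings and connected maps; since connectedness in $\RRB$ is \emph{defined} through $U$ (Definition~\ref{defcovering}) and pullbacks in the variety $\RRB$ are computed on underlying sets, this transfer is immediate. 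Your proposal never mentions connected maps in the stability paragraph, so this part of the proposition remains unproved, although the missing argument is only a couple of lines in the style you already use elsewhere.
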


\begin{proof}We shall use Lemma~\ref{lma:elements} and the methods of \cite{SW,BK}. It suffices to treat the case of right regular bands. Let $f:X\to Y$ be a map of right regular bands and let $F=f_!(\star_{PX})$ be the left Kan extension of a terminal presheaf on $(X,\leq)$ along $Uf:(X,\leq)\to(Y,\leq)$. Recall that $F(y)=\pi_0(y{\downarrow} f)$. This defines a factorisation of $Uf$ as $X\to\el_Y(F)\to Y$. By Lemma~\ref{lma:elements}, the poset of elements $\el_Y(F)$ underlies a unique right regular band structure such that $\el_Y(F)\to Y$ is a semigroup homomorphism.

It suffices therefore to show that $X\to\el_Y(F)$ is a semigroup homomorphism as well. The latter takes $x\in X$ to the pair $(f(x),i_x\in F(f(x)))$ where $i_x$ denotes the connected component of $f(x){\downarrow} f$ containing $f(x)\leq f(x)$. We have to show that $(f(x),i_x)(f(x'),i_{x'})=(f(xx'),i_{xx'})$. By definition, $(f(x),i_x)(f(x'),i_{x'})=(f(xx'),i_{x'}\rceil^{f(x')}_{f(xx')})$. Restricting $i_{x'}$ along $f(x')\geq f(xx')$ lands in the connected component $i_{xx'}$.

Connected maps and coverings of right regular bands compose. Moreover, the comprehensive factorisation system for right regular bands is orthogonal because it is so for posets and the forgetful functor $U:\RRB\to\Poset$ is faithful.

For the stability it suffices to show that connected maps are stable under pullback along coverings. This is true for posets,  cf. \cite[Remark 1.11]{BK}, and is thus inherited by right regular bands because the forgetful functor $U:\RRB\to\Poset$ preserves and reflects pullbacks, coverings and connected maps.\end{proof}

\subsection{Normal bands}

A band $X$ is called \emph{normal} if the principal order-ideals ${\downarrow}z=zXz$ are commutative subbands of $X$ for all $z$, i.e. $\forall x,y,z\in X:zxyz=zyxz$.
A band is called left (resp. right) normal if it is simultaneously normal and left (resp. right) regular. According to Kimura \cite{Ki} any normal band is the fibre product of its left and right normal reflections over the semilattice reflection. The following result implies that a normal band is uniquely determined by its semilattice reflection together with two presheaves on it. This observation goes back to Kimura-Yamada \cite{YK} and has recently been recast in modern terms by Kudryavtseva-Lawson \cite{KL}.

\begin{prp}[cf. \cite{YK,KL}]\label{prp:YK}A regular band $X$ is normal if and only if its semilattice reflection $X\to X/\Dd$ is a covering.\end{prp}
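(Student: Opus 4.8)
The plan is to unwind the meaning of ``$X\to X/\Dd$ is a covering'' in terms of the band operation and then to exploit the description ${\downarrow}a=\{x\mid x\leq a\}=aXa$ of principal order-ideals recalled in Section~\ref{sct:bands}. Write $q\colon X\to X/\Dd$ for the semilattice reflection (Proposition~\ref{prp:basic}ii); it is a homomorphism onto a semilattice, and $[a]\leq[b]$ in $X/\Dd$ means $[a]=[a]\wedge[b]$. By Definition~\ref{defcovering}, saying $q$ is a covering amounts to: for every $a\in X$ and every $\Dd$-class $[b]\leq[a]$ there is a \emph{unique} $c\leq a$ with $[c]=[b]$. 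The two facts I will use repeatedly are that $c\leq a$ is equivalent to $c\in aXa$, and that the $\Dd$-relation collapses to equality on any semilattice (if $x\Dd y$ in a commutative band then $x=xyx=xxy=xy$ and symmetrically $y=xy$, so $x=y$).

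For the forward implication I would assume $X$ normal and verify existence and uniqueness of lifts separately. The candidate lift is $c=aba$ for any representative $b$ of the given class: since $aba\in aXa$ we get $c\leq a$, and applying the homomorphism $q$ gives $[c]=[a]\wedge[b]\wedge[a]=[a]\wedge[b]=[b]$, the last step because $[b]\leq[a]$. For uniqueness, if $c_1,c_2\leq a$ both lie in the class $[b]$, i.e. $c_1\Dd c_2$, then both belong to ${\downarrow}a=aXa$, which is a \emph{commutative} subband precisely by normality; the triviality of $\Dd$ on a semilattice then forces $c_1=c_2$. Hence $q$ is a covering.

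For the converse I would assume $q$ is a covering and show directly that each ${\downarrow}a=aXa$ is commutative, i.e. $axya=ayxa$ for all $x,y\in X$. Setting $c_1=axya$ and $c_2=ayxa$, both lie in $aXa$, hence $c_1,c_2\leq a$, and applying $q$ gives $[c_1]=[a]\wedge[x]\wedge[y]=[a]\wedge[y]\wedge[x]=[c_2]$ by commutativity of the semilattice $X/\Dd$. Thus $c_1$ and $c_2$ are two elements $\leq a$ lying over the same class $[c_1]\leq[a]$, and the uniqueness clause of the covering property forces $c_1=c_2$. As $a,x,y$ were arbitrary, every principal ideal is commutative, i.e. $X$ is normal.

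I do not expect a genuine obstacle here: the argument is short once the right candidate lift $aba$ is spotted and the collapse of $\Dd$ on semilattices is invoked. The only points requiring care are the bookkeeping of the equivalence $c\leq a\iff c\in aXa$ and keeping the two halves of the covering property (existence via the explicit lift, uniqueness via commutativity of principal ideals) cleanly separated; the converse is then just the uniqueness half read backwards. I would also remark that regularity of $X$ is not actually used anywhere in this argument and is retained only to match the ambient setting of the section.
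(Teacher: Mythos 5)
Your proof is correct, and it takes a genuinely different route from the paper's. The paper reduces to the one-sided case via Kimura's fibre-product decomposition $X\cong X/\Rr\times_{X/\Dd}X/\Ll$ and the stability of coverings under pullback and composition: it verifies the covering property for right normal bands (lift $yz$ of $[y]\leq[z]$; uniqueness via $y_1=y_1z=y_2y_1z=y_1y_2z=y_2z=y_2$), and for the converse passes to the right regular quotient, where the covering property applied to the two elements $xzyz$ and $yzxz$ below $z$, combined with right regularity, yields $xyz=yxz$. You instead argue two-sidedly: the explicit lift $aba$, uniqueness read as triviality of Green's $\Dd$-relation on the commutative subband $aXa={\downarrow}a$, and the converse by comparing $axya$ with $ayxa$ under the uniqueness clause of the covering property. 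Your closing observation that regularity is never used is correct and is the real payoff of your approach: your argument proves the stronger statement that an \emph{arbitrary} band is normal if and only if its semilattice reflection is a covering (the underlying poset, and hence the notion of covering, makes sense for any band), and it is entirely self-contained, whereas the paper's route leans on the pullback/composition stability of coverings but showcases the reduction-to-one-sided-bands technique that the section advertises after Kimura's theorem and reuses elsewhere. One step you should make explicit in the uniqueness argument: the relation $c_1\Dd c_2$ in $X$ is witnessed by the equations $c_1=c_1c_2c_1$ and $c_2=c_2c_1c_2$, whose right-hand sides lie in $aXa$ because this subset is closed under multiplication; hence $c_1\Dd c_2$ holds in the band $aXa$ itself, and only then does triviality of $\Dd$ on a semilattice finish the proof.
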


\begin{proof}Let $X$ be normal band. Then $X/\Rr$ (resp. $X/\Ll$) is left (resp. right) normal. It suffices now to show that for a left (resp. right) normal band the semilattice reflection is a covering because we then get that $X\to X/\Dd$ is a covering as well, coverings being stable under pullback and under composition. So assume that $X$ is right normal and let $[y]\leq[z]$ in $X/\Dd$. Then, since $[yz]=[y]\wedge[z]=[y]$, the element $yz\leq z$ is a lift in $X$. Moreover, for $y_1,y_2\leq z$ with $[y_1]=[y_2]$, we have $y_1=y_1z=y_2y_1z=y_1y_2z=y_2z=y_2$ whence the covering property of $X\to X/\Dd$.

Conversely, assume that for a regular band $X$ the reflection $X\to X/\Dd$ is a covering. Then $X/\Rr\to X/\Dd$ (resp. $X/\Ll\to X/\Dd$) is a covering of left (resp. right) regular bands. It suffices thus to show that this implies that $X/\Rr$ (resp. $X/\Ll$) is a left (resp. right) normal band. Therefore, without loss of generality, we can assume that $X$ is right regular and $X\to X/\Dd$ is a covering. Pick $x,y,z\in X$. Then $xz\leq z$ and $yz\leq z$. The products $xzyz$ and $yzxz$ have the same image in $X/\Dd$ and hence, by the covering property, they are equal in $X$. But by right regularity, we have $xzyz=xyz$ and $yzxz=yxz$, whence $xyz=yxz$, i.e. $X$ is right normal.\end{proof}

\begin{rmk}\label{rmk:globalpresheaves}A covering of posets $f\colon X\to Y$ is isomorphic to the projection map $\el_Y(F)\to Y$ for a uniquely determined presheaf $F$ on $Y$, defined elementwise by $F(y)=f^{-1}(y)$, cf. Remark~\ref{cover=elementproj}. Accordingly, a right regular band $X$ is normal if and only if there is an isomorphism of right regular bands

\[
X\to \el_{X/\Dd}(F): x\mapsto([x]_\Dd,x)
\]
where the band structure of the poset of elements of $F$ is given by
\[
(a,s)(b,t)=(a\wedge b,t\rceil^b_{a\wedge b}).
\]
The restriction $t\rceil^b_{a\wedge b}$ is uniquely determined by the covering property of $X\to X/\Dd$ and can be made explicit as follows: for $s\in[x]_\Dd$ and $t\in[y]_\Dd$ we get $t\rceil^{[y]_\Dd}_{[x]_\Dd\wedge[y]_\Dd}=st$, cf. the proof of Proposition \ref{prp:YK} and Remark \ref{cover=elementproj}.


A similar representation exists for left normal bands. In order to recover a left normal band $X$ from the poset of elements $\el_{X/\Dd}(F)$ of its presheaf, one has however to twist the multiplication $t\rceil^{[y]_\Dd}_{[x]_\Dd\wedge[y]_\Dd}=ts$ for $s\in[x]_\Dd$ and $t\in[y]_\Dd$.

Since any normal band $X$ is a fibre product $X/\Rr\times_{X\Dd}X/\Ll$ of its left and right normal reflections, a normal band $X$ can be represented by a \emph{pair} $(F_1,F_2)$ of presheaves on its semilattice reflection $X/\Dd$ such $X\cong\el_{X/\Dd}(F_1)\times_{X/\Dd}\el_{X/\Dd}(F_2)$ where on one of the two factors multiplication is twisted, cf. \cite[Theorem 5]{YK}.

There is an elegant way to describe the representation theorems for normal, resp. right normal, resp. left normal bands in a uniform way: a regular band $X$ is normal, resp. right normal, resp. left normal if and only if the semilattice reflection $X\to X/\Dd$ is a covering with rectangular, resp. right rectangular, resp. left rectangular fibres. Therefore, $X$ corresponds to the poset of elements of a presheaf taking values in the respective categories of rectangular bands. Now, the categories of left/right rectangular bands are both equivalent to the category of sets with reversed multiplication though, while the category of rectangular bands is equivalent to the category of double sets.\end{rmk}

\begin{cor}\label{YK2}Let $X\to Y$ be a covering of regular bands and assume that $Y$ is normal. Then $X$ is normal as well.\end{cor}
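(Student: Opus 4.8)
The plan is to invoke Proposition~\ref{prp:YK}: as $X$ is assumed regular, it suffices to prove that the semilattice reflection $h\colon X\to X/\Dd$ is a covering. First I would assemble the naturality square of semilattice reflections
\[
\xymatrix{
X\ar[r]^-{f}\ar[d]_-{h} & Y\ar[d]^-{g}\\
X/\Dd\ar[r]_-{\bar f} & Y/\Dd,
}
\]
where $g$ is the reflection of $Y$ and $\bar f$ the induced semilattice homomorphism. Commutativity is just functoriality of $(-)/\Dd$, which holds because band homomorphisms preserve $\Dd$ (apply $f$ to the defining identities $x=xyx$, $y=yxy$). Since $Y$ is normal, Proposition~\ref{prp:YK} tells me that $g$ is a covering, and coverings compose, so the composite $gf=\bar f\,h\colon X\to Y/\Dd$ is again a covering.

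Next I would verify the covering property of $h$ directly, splitting it into existence and uniqueness of lifts. Existence is automatic for any band: given $x\in X$ and $[w]_\Dd\le[x]_\Dd$, the element $z=xwx$ satisfies $z\le x$ by idempotency and $[z]_\Dd=[x]_\Dd\wedge[w]_\Dd=[w]_\Dd$, so it is the required lift. The substantive part is uniqueness, and this is exactly where normality of $Y$ is used. Suppose $x_1,x_2\le x$ with $[x_1]_\Dd=[x_2]_\Dd$. Applying $f$, which preserves $\le$ by Lemma~\ref{lma:idemGreen}(i) and preserves $\Dd$, gives $f(x_1),f(x_2)\le f(x)$ with $[f(x_1)]_\Dd=[f(x_2)]_\Dd$; because $g$ is a covering this forces $f(x_1)=f(x_2)$, and because $f$ is a covering this in turn forces $x_1=x_2$. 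Hence $h$ is a covering, and Proposition~\ref{prp:YK} yields that $X$ is normal.

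I expect the main obstacle to be ordering the two uniqueness arguments correctly: one must first push the equality down to $Y$ using that $g$ is a discrete fibration over the semilattice (which is where normality is genuinely consumed), and only afterwards lift it back across the covering $f$; trying to argue uniqueness directly inside $X$ is precisely what fails for a non-normal regular band. I would also mention a more abstract alternative: by Lemma~\ref{lma:elements} the map $\bar f$ embeds $X/\Dd$ as a downset of $Y/\Dd$, hence is itself a covering, so $h$ being a covering follows formally from the fact that $gf=\bar f\,h$ is a covering together with the right-cancellation property enjoyed by the right class of the orthogonal comprehensive factorisation system of Proposition~\ref{prp:factorisation}. The elementary argument above is shorter and sidesteps this cancellation lemma.
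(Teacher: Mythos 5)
Your proof is correct. It shares with the paper the unavoidable skeleton --- apply Proposition~\ref{prp:YK} to $Y$ to see that $g\colon Y\to Y/\Dd$ is a covering, reduce the claim via Proposition~\ref{prp:YK} to showing that $h\colon X\to X/\Dd$ is a covering, and exploit the commuting square $\bar f\,h=g\,f$ --- but the way you establish the covering property of $h$ is genuinely different. The paper argues abstractly: it first asserts that $\bar f\colon X/\Dd\to Y/\Dd$ is a covering, on the grounds that $h$ is an epimorphism of posets, and then removes $\bar f$ from the composite covering $\bar f\,h$ by the cancellation property of the right class of the orthogonal system of Proposition~\ref{prp:factorisation}; this is essentially the ``abstract alternative'' you sketch at the end. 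Your main argument instead checks the two halves of the covering property of $h$ by hand: existence of lifts via $z=xwx$ (valid in any band), and uniqueness by pushing two competing lifts forward along $f$, identifying their images using the covering $g$ (the only place where normality of $Y$ enters) and then identifying the lifts themselves using the covering $f$. This buys something concrete: mere epimorphy of $h$ is \emph{not} enough in general to transfer the covering property from $\bar f\,h$ to $\bar f$ (uniqueness of lifts for $\bar f$ can fail over a poset epimorphism); what makes that step of the paper legitimate is precisely the existence-of-lifts property of the band reflection that your $z=xwx$ computation proves, so your proof makes explicit a point the paper leaves tacit. Conversely, the paper's route is shorter and stays uniformly within the comprehensive-factorisation formalism of Section~\ref{sct:comprehension}. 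Two caveats on your closing remark: Lemma~\ref{lma:elements} is stated only for left or right regular bands, whereas the corollary concerns general regular bands, so the downset-embedding route needs a preliminary passage to the one-sided reflections (as in the proof of Proposition~\ref{prp:YK}); and to know that $X$ is isomorphic to $\el_Y(F)$ \emph{as a band}, not merely as a poset over $Y$, you must invoke the uniqueness clause of Lemma~\ref{lma:elements}. Neither caveat affects your main argument.
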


\begin{proof}According to Proposition~\ref{prp:YK}, it suffices to show that $X\to X/\Dd$ is a covering of posets. If we compose with $X/\Dd\to Y/\Dd$ we get a covering. Moreover, since the quotient map $X\to X/\Dd$ is an epimorphism of posets, the induced semilattice homomorphism $X/\Dd\to Y/\Dd$ is a covering. We can now apply the left cancellability of poset coverings to conclude that $X\to X/\Dd$ is a covering as well.\end{proof}


\begin{prp}\label{prp:normalreflection}For any right regular band $X$, the normal reflection $X/\Nn$ may be obtained by comprehensive factorisation of the semilattice reflection $X\to X/\Dd$.\end{prp}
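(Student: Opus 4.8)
The plan is to apply the comprehensive factorisation system of Proposition~\ref{prp:factorisation} to the semilattice reflection $\eta_X\colon X\to X/\Dd$, obtaining a factorisation
\[
X\overset{\alpha}{\longrightarrow}Z\overset{\beta}{\longrightarrow}X/\Dd
\]
with $\alpha$ connected and $\beta$ a covering, and then to show that $\alpha\colon X\to Z$ realises the (right) normal reflection of $X$, so that $Z\cong X/\Nn$. Concretely $Z=\el_{X/\Dd}(F)$ for the presheaf $F$ associated to $\eta_X$ as in Proposition~\ref{prp:factorisation}, with $F(y)=\pi_0(y{\downarrow}\eta_X)$. Since $\eta_X$ is surjective, every such fibre is non-empty, so by Lemma~\ref{lma:elements} the induced semilattice map $Z/\Dd\hookrightarrow X/\Dd$ is not only injective but onto a downset equal to everything, hence an isomorphism.

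First I would check that $Z$ is normal. By Lemma~\ref{lma:elements}, $Z$ is a right regular band and $\beta\colon Z\to X/\Dd$ is a band homomorphism, in fact a covering. As $\beta$ is a map into a semilattice it factors as $Z\to Z/\Dd\to X/\Dd$ through the semilattice reflection of $Z$, and the second map is exactly the isomorphism $Z/\Dd\eqv X/\Dd$ noted above. Thus the semilattice reflection of $Z$ differs from the covering $\beta$ only by an isomorphism, hence is itself a covering, and Proposition~\ref{prp:YK} shows $Z$ is normal.

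Next, the universal property. Given a right normal band $N$ and a homomorphism $g\colon X\to N$, I would form the commutative square with top $g$, left $\alpha$, right the semilattice reflection $\eta_N\colon N\to N/\Dd$, and bottom $\overline{g}\circ\beta$ where $\overline{g}=g/\Dd$; naturality of the semilattice reflection, together with $\beta\alpha=\eta_X$, makes it commute. Since $N$ is normal, $\eta_N$ is a covering by Proposition~\ref{prp:YK} and so lies in the right class, while $\alpha$ lies in the left class. Orthogonality (Proposition~\ref{prp:factorisation}) then supplies a diagonal $h\colon Z\to N$ with $h\alpha=g$, which gives existence of the factorisation through $\alpha$.

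The main obstacle is \emph{uniqueness}: connected maps need not be epimorphisms (a final map of posets can be non-surjective, e.g.\ $\{\ast\}\to\{0<1\}$ hitting the top), so $h\alpha=g$ does not by itself pin down $h$. Here I would use orthogonality in its full hom-set form: $\alpha\perp\eta_N$ asserts that
\[
\Hom(Z,N)\;\cong\;\Hom(X,N)\times_{\Hom(X,N/\Dd)}\Hom(Z,N/\Dd).
\]
Because $\alpha$ induces an isomorphism $X/\Dd\eqv Z/\Dd$ on semilattice reflections (from $\beta\alpha=\eta_X$ and the isomorphism $\beta/\Dd$ above), precomposition with $\alpha$ is a bijection $\Hom(Z,S)\eqv\Hom(X,S)$ for every semilattice $S$, in particular for $S=N/\Dd$. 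Hence the right-hand vertical map of the pullback is a bijection, so the projection onto $\Hom(X,N)$ is a bijection; that is, $-\circ\alpha\colon\Hom(Z,N)\eqv\Hom(X,N)$. This is precisely the universal property identifying $\alpha\colon X\to Z$ with the normal reflection $X\to X/\Nn$.
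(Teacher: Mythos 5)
Your proof is correct, but the way you identify $Z$ with $X/\Nn$ is genuinely different from the paper's. The paper plays the two candidates off against each other: since $X_N$ (your $Z$) is normal, the map $X\to X_N$ factors through the congruence quotient $X\to X/\Nn$; since $X/\Nn$ is itself normal, $X/\Nn\to X/\Dd$ is a covering by Proposition~\ref{prp:YK}, so a single orthogonality square yields a map $X_N\to X/\Nn$ in the other direction; the two maps are then mutually inverse because the quotient map $X\to X/\Nn$ is epimorphic. You instead verify the reflection universal property of $\alpha\colon X\to Z$ from scratch: orthogonality $\alpha\perp\eta_N$ in its hom-set pullback form, together with the bijection $\Hom(Z,S)\cong\Hom(X,S)$ for every semilattice $S$ (coming from $\alpha/\Dd$ being an isomorphism), gives that $-\circ\alpha\colon\Hom(Z,N)\to\Hom(X,N)$ is bijective for every right normal band $N$. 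What each buys: the paper's argument is shorter because it borrows the already-constructed quotient $X/\Nn$ and the epi-ness of quotient maps --- exactly the resource you correctly observed is unavailable for the connected map $\alpha$ itself; your argument never invokes the congruence-theoretic construction of $X/\Nn$, so it in effect re-proves the existence of the normal reflection, and your hom-set use of orthogonality is precisely the right substitute for the missing epi-ness. (Your normality argument for $Z$, via $Z/\Dd\cong X/\Dd$ and Proposition~\ref{prp:YK}, is a direct rendering of what the paper quotes as Corollary~\ref{YK2}; the two amount to the same thing.)
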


\begin{proof}Let us denote the comprehensive factorisation of the semilattice reflection $X\to X/\Dd$ by $X\to X_N \to X/\Dd$. Then $X_N \to X/\Dd$ is a covering and $X_N$ is a right normal band by Corollary~\ref{YK2}. Therefore, $X_N$ factors through the normal reflection $X/\Nn$ of $X$, where $\Nn$ is the congruence relation on $X$ generated by $(xyz,yxz)$ for $x,y,z\in X$. We get thus the following commutative diagram

$$\xymatrix{X\ar[r]^{conn}\ar[d]&X_N\ar[d]^{cov}\\X/\Nn\ar[r]_{cov}\ar[ru]&X/\Dd}$$in which the upper horizontal map is connected by construction. Again by Proposition~\ref{prp:YK}, the lower horizontal map is a covering as well. The orthogonality of the comprehensive factorisation system implies that the diagonal has a section making the whole diagram commute. Since the quotient map $X\to X/\Nn$ is epimorphic, the diagonal must be invertible.\end{proof}

\section{Distributive bands and distributive skew lattices}\label{sct:skew}

By Proposition~\ref{prp:YK} normal bands are equivalent to presheaves over semilattices.
The aim of the following definition is to capture the subclass of those bands which correspond to sheaves on bounded distributive lattices. We then relate distributive bands to distributive skew lattices in the sense of Leech \cite{Le1,Le2,Le3}.

\subsection{Distributive bands}

Given a band $X$ and a semilattice $L$, we say that $X$ is a band over $L$ provided $X/\Dd\cong L$.
A \emph{zero element} of $X$ is a bottom element of $X$ while a \emph{global element} of $X$ is an element whose reflection is a top element for the meet-semilattice $X/\Dd$.

\begin{dfn}\label{dfn:distband}
A band $X$ is called \emph{distributive} (resp. \emph{Boolean}) provided
\begin{itemize}
\item[(i)]$X$ is a normal band with zero element;
\item[(ii)]each pair of commuting elements of $X$ has a supremum;
\item[(iii)]$X/\Dd$ is a bounded distributive (resp. Boolean) lattice;\end{itemize}
A map of distributive (Boolean) bands is a morphism of the underlying bands preserving zero element, global elements and suprema of commuting elements.\end{dfn}

\begin{rmk}\label{rmk:etale}A regular band $X$ is normal if and only if $X\to X/\Dd$ is a covering, cf. Proposition~\ref{prp:YK}. Condition (ii) may then be interpreted as a lifting of suprema which in our case exist in $X/\Dd$ by condition (iii). Therefore, let us call a covering with this lifting property \emph{etale}. A distributive (Boolean) band may then be defined as an \emph{etale covering} of a distributive (Boolean) lattice. This uniform definition suggests extensions to other classes of lattices, e.g. modular or geometric lattices.

We will use the terminology \emph{right} distributive (resp. \emph{right} Boolean) band if the underlying band is \emph{right regular}. Right Boolean bands have already been studied by Kudryavtseva-Lawson \cite{KL} and shown to form a category equivalent to the category of right-handed \emph{Boolean skew lattices} in the sense of Leech \cite{Le2}. This result does \emph{not} extend to right distributive bands. Conditions under which a right distributive band underlies a right-handed distributive skew lattice are given in Section~\ref{sct:patch} below.\end{rmk}

\begin{rmk}\label{rmk:top}There is an asymmetry regarding ``bottom'' and ``top'' of a distributive band. Any distributive band $X$ has a zero element lying over the bottom element of $X/\Dd$. On the other hand, our assumption that $X/\Dd$ has a top element amounts to the assumption that $X$ has global elements. However, unless $X$ itself is commutative and hence a bounded distributive lattice, there is no top element in $X$. The literature on Boolean bands frequently drops also the assumption that $X/\Dd$ has a top element which forces one to deal with so-called generalised Boolean algebras. We deviate from this practice in order to stay in the realm of classical Stone duality. This simplifies the duality theory but it is not necessary, see e.g. \cite{BCGGK}.\end{rmk}

\begin{lma}\label{lma:sheafppty}
In a distributive band $X$ the following properties hold:
\begin{itemize}
\item[(i)] The quotient map $X\to X/\Dd$ is a morphism of distributive bands.
\item[(ii)]The band operation distributes over binary suprema: for any $x$ and pair $y,z$ of commuting elements one has: $x(y\vee z)=xy\vee xz\text{ and }(y\vee z)x=yx\vee zx.$\item[(iii)]Any finite set of pairwise commuting elements has a supremum.\end{itemize}\end{lma}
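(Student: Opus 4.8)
The plan is to handle the three parts in order, deriving (ii) and (iii) from the normal‑band structure of Section~\ref{sct:bands} together with Definition~\ref{dfn:distband}. For (i), I would first note that the semilattice $X/\Dd$ is itself a distributive band: it is a commutative (hence normal) band, its bottom element is a zero, every pair of elements commutes and admits a supremum (the lattice join), and $(X/\Dd)/\Dd\cong X/\Dd$ is a bounded distributive lattice by hypothesis. It then remains to check that the reflection $q\colon X\to X/\Dd$ preserves the structure of Definition~\ref{dfn:distband}. Preservation of the zero and of global elements is immediate, since $q$ is a surjective band homomorphism carrying the bottom to the bottom and, by definition, a global element to the top. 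The substantial point is preservation of suprema of commuting elements, for which I would isolate a property of the covering $q$ (cf. Proposition~\ref{prp:YK}): whenever $w=y\vee z$ is a least upper bound in $(X,\leq)$ then $q(w)=q(y)\vee q(z)$. Monotonicity gives $q(w)\geq q(y)\vee q(z)$; conversely, lifting $a:=q(y)\vee q(z)\leq q(w)$ to the unique $w'\leq w$ with $q(w')=a$ and applying the uniqueness clause of the covering property twice shows $y\leq w'$ and $z\leq w'$, so that $w=y\vee z\leq w'\leq w$ forces $q(w)=a$.

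For (ii) I would work through the presheaf representation of Remark~\ref{rmk:globalpresheaves}, identifying $X$ (in the right regular case) with $\el_{X/\Dd}(F)$, so that elements are pairs $(a,s)$ and the product is the ``restriction'' $(a,s)(b,t)=(a\wedge b,\,t\rceil^{b}_{a\wedge b})$. Writing $x=(c,u)$, $y=(a,s)$, $z=(b,t)$ with $y,z$ commuting (equivalently $s,t$ agree after restriction to $a\wedge b$), I would first verify that $xy$ and $xz$ commute: the meet of their $\Dd$‑images is $[x]\wedge[y]\wedge[z]=[xyz]$, and $xyz=xzy$ is a common lower bound realising this meet, so by Lemma~\ref{lma:commuting} one gets $xy\cdot xz=xyz=xz\cdot xy$. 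Next, both $x(y\vee z)$ and $xy\vee xz$ lie over $c\wedge(a\vee b)=(c\wedge a)\vee(c\wedge b)$ — this is exactly where distributivity of $X/\Dd$ enters — and functoriality of restriction shows that each restricts to $s\rceil^{a}_{c\wedge a}$ over $c\wedge a$ and to $t\rceil^{b}_{c\wedge b}$ over $c\wedge b$; in particular $x(y\vee z)$ is an upper bound of $xy$ and $xz$, so $xy\vee xz\leq x(y\vee z)$. Since both elements share the same $\Dd$‑class and are comparable, order‑discreteness of $\Dd$‑classes (Proposition~\ref{prp:basic}iv) yields $x(y\vee z)=xy\vee xz$. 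The identity $(y\vee z)x=yx\vee zx$ is symmetric, and the general normal case follows from the right regular one and its left regular dual via $X\cong X/\Rr\times_{X/\Dd}X/\Ll$.

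Part (iii) I would obtain from (ii) by induction on the cardinality of a pairwise commuting set, strengthening the inductive hypothesis to record also that any element commuting with each member commutes with the supremum. This strengthening is forced by (ii): if $x$ commutes with $x_1$ and $x_2$ and the latter commute, then $x(x_1\vee x_2)=xx_1\vee xx_2=x_1x\vee x_2x=(x_1\vee x_2)x$. In the inductive step the partial supremum $v=x_1\vee\cdots\vee x_{k-1}$ exists by hypothesis, $x_k$ commutes with $v$ by the strengthened hypothesis, and the existence clause of Definition~\ref{dfn:distband}(ii) produces $v\vee x_k$, which is readily seen to be the supremum of the whole family.

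I expect the main obstacle to be part (ii): because the band is non‑commutative, one cannot argue purely in the lattice $X/\Dd$, and the crux is to establish simultaneously that $xy$ and $xz$ commute — so that $xy\vee xz$ is even defined — and that the product distributes over the join. Both facts hinge on the covering (normal‑band) structure, and the cleanest bookkeeping is the presheaf description in which multiplication becomes restriction and the computation collapses to the distributive law $c\wedge(a\vee b)=(c\wedge a)\vee(c\wedge b)$ in $X/\Dd$.
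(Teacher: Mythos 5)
Your proof is correct, but it takes a genuinely different route from the paper's in parts (i) and (ii). For (i), the paper argues equationally inside the band: for $x,y\preceq z$ it computes $xz(x\vee y)=zx(x\vee y)=x$ to conclude $x\vee y\leq z(x\vee y)$ and hence $[x\vee y]_\Dd\leq[z]_\Dd$; you instead exploit the covering property of $X\to X/\Dd$ (Proposition~\ref{prp:YK}) purely order-theoretically, lifting the join $q(y)\vee q(z)$ below $w=y\vee z$ and using uniqueness of lifts to force the lift to absorb $y$ and $z$ --- this needs no band computation at all, only normality, and it bypasses the reduction to the right normal case. For (ii), the paper again computes directly with the band operation (right normality gives that $yx,zx$ commute, commutativity of $y,z$ gives that $xy,xz$ commute, and then both identities follow by comparing $\Dd$-equivalent comparable elements); you translate into the Kimura--Yamada presheaf picture of Remark~\ref{rmk:globalpresheaves}, where multiplication is restriction, commutation of $xy,xz$ comes from Lemma~\ref{lma:commuting} applied to the common lower bound $xyz$, and the identity collapses to the distributive law $c\wedge(a\vee b)=(c\wedge a)\vee(c\wedge b)$ in $X/\Dd$ plus the same final appeal to order-discreteness of $\Dd$-classes (Proposition~\ref{prp:basic}). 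Your version makes transparent exactly where distributivity of $X/\Dd$ enters; the paper's stays elementary and representation-free. In (iii) the inductions are essentially the same, but your strengthened inductive hypothesis (anything commuting with every member commutes with the partial supremum) is tidier than the paper's device of applying (ii) to the $n-1$ products $x_1x_2,\dots,x_1x_n$, which implicitly iterates (ii) over multi-fold joins; your step uses only binary distributivity. Both you and the paper invoke the reduction to the right normal case via $X\cong X/\Rr\times_{X/\Dd}X/\Ll$ without verifying that zeros, commutation and suprema are detected componentwise, so this common ellipsis is not a gap relative to the paper's own level of detail.
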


\begin{proof} First, note that we can assume that our band $X$ is right normal. For (i),
observe that since $X/\Dd$ is a bounded distributive lattice, it is in particular a distributive band.  Clearly the quotient map preserves zero and global elements. Also, $[x\vee y]_\Dd$ is an upper bound of $[x]_\Dd$ and $[y]_\Dd$. Further, if $x,y\preceq z$, then $x\leq z(x\vee y)$ since $xz(x\vee y)=zx(x\vee y)=x(x\vee y)=x$. Similarly  $y\leq z(x\vee y)$ and thus $x\vee y\leq z(x\vee y)$. It follows that $[x\vee y]_\Dd\leq [z(x\vee y)]_\Dd=[z]_\Dd\wedge[x\vee y]_\Dd$ and thus $[x\vee y]_\Dd\leq [z]_\Dd$. That is, $[x\vee y]_\Dd= [x]_\Dd\vee[y]_\Dd$ as required.

For (ii), by right normality, $yx$ and $zx$ commute so that $yx\vee zx$ exists.  Also, since $y$ and $z$ are less than $y\vee z$ and $X$ is right normal,  $yx$ and $zx$ are less than $(y\vee z)x$, whence $yx\vee zx\leq(y\vee z)x$. Since these elements are $\Dd$-equivalent, it follows that they are equal and thus we obtain the second identity. For the first identity, since $y$ and $z$ commute, it follows that $xy$ and $xz$ commute. Also, since $y\leq y\vee z$, we have $y=y(y\vee z)$ and it follows that $xyx(y\vee z)=xy(y\vee z)=xyy=xy$ so that $xy\leq x(y\vee z)$. Similarily, $xz\leq x(y\vee z)$, whence $xy\vee xz\leq x(y\vee z)$. Again, since these elements are $\Dd$-equivalent, they are in fact equal.

For (iii) note that the supremum of the empty set is the zero element which exists by hypothesis. Assume inductively that $n-1$ pairwise commuting elements have a supremum. It is then enough to show that for $n$ pairwise commuting elements $x_1,\dots,x_n$, the first element $x_1$ commutes with the supremum $x_2\vee\cdots\vee x_n$.

The $n-1$ elements $x_1x_2,x_1x_3,\dots,x_1x_n$ commute pairwise and thus admit a supremum $x_1x_2\vee\cdots\vee x_1x_{n-1}$ which equals $x_1(x_2\vee\cdots\vee x_n)$ by (ii). Similarly, we get $x_2x_1\vee\cdots \vee x_{n-1}x_1=(x_2\vee\cdots\vee x_{n-1})x_1$ from which we obtain the required commutation relation because $x_1$ commutes with $x_2,\dots,x_n$.\end{proof}

\begin{lma}\label{lma:reflection}A morphism of distributive bands $\phi:X\to Y$ induces a morphism of bounded distributive lattices $\phi/\Dd:X/\Dd\to Y/\Dd$.\end{lma}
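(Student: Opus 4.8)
The plan is to first produce the map $\phi/\Dd$ from the universal property of the semilattice reflection, and then to verify separately that it respects the four pieces of bounded-lattice structure: meets, bottom, top and binary joins. Since $\Dd$ is a semigroup congruence (Proposition~\ref{prp:basic}i) and is preserved by any band homomorphism---if $x=xyx$ and $y=yxy$ then $\phi(x)=\phi(x)\phi(y)\phi(x)$ and symmetrically---the composite $X\overset{\phi}{\to}Y\to Y/\Dd$ is a band map into a semilattice and hence factors uniquely through $X\to X/\Dd$ by Proposition~\ref{prp:basic}ii. This yields $\phi/\Dd$ as a semilattice homomorphism, so that meets are automatically preserved.

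Preservation of the distinguished bounds is then immediate from the definition of a morphism of distributive bands. Because $\phi$ carries the zero element of $X$ to that of $Y$, and the quotient map sends a zero to the bottom of the semilattice (Lemma~\ref{lma:sheafppty}i), the map $\phi/\Dd$ preserves $0$; likewise, since $\phi$ preserves global elements, whose reflections are by definition the top elements, $\phi/\Dd$ preserves $1$.

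The substance of the argument is preservation of binary joins, and the key device is the lifting of a pair $[x]_\Dd,[y]_\Dd\in X/\Dd$ to a \emph{commuting} pair in $X$. For this I would fix a lift $w\in X$ of $[x]_\Dd\vee[y]_\Dd$ and, using that $X$ is normal so that $X\to X/\Dd$ is a covering (Proposition~\ref{prp:YK}), take the unique $x',y'\leq w$ with $[x']_\Dd=[x]_\Dd$ and $[y']_\Dd=[y]_\Dd$ furnished by the covering property. Both $x'$ and $y'$ then lie in the principal order-ideal ${\downarrow}w=wXw$, which is a commutative subband by normality, so $x'y'=y'x'$. Now $x'\vee y'$ exists in $X$ by condition (ii) of a distributive band, and Lemma~\ref{lma:sheafppty}i gives $[x'\vee y']_\Dd=[x']_\Dd\vee[y']_\Dd=[x]_\Dd\vee[y]_\Dd$.

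It remains to push this through $\phi$. Since $\phi$ preserves suprema of commuting elements we have $\phi(x'\vee y')=\phi(x')\vee\phi(y')$, and $\phi(x'),\phi(y')$ commute because $x',y'$ do; applying Lemma~\ref{lma:sheafppty}i in $Y$ yields $[\phi(x')\vee\phi(y')]_\Dd=[\phi(x')]_\Dd\vee[\phi(y')]_\Dd$. Chaining these identities gives
\[
\phi/\Dd([x]_\Dd\vee[y]_\Dd)=[\phi(x'\vee y')]_\Dd=[\phi(x')]_\Dd\vee[\phi(y')]_\Dd=\phi/\Dd([x]_\Dd)\vee\phi/\Dd([y]_\Dd),
\]
as required. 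The hard part is precisely the existence of the commuting lift $x',y'$; everything else is a direct unwinding of the definition of a morphism of distributive bands together with Lemma~\ref{lma:sheafppty}i. One should also check that the argument uses only normality, and so never appeals to right-regularity or to the fibre-product decomposition, confirming that it applies to arbitrary (not necessarily right) distributive bands.
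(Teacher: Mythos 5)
Your proposal is correct and follows essentially the same route as the paper: construct $\phi/\Dd$ via the universal property of the semilattice reflection, get the bounds from preservation of zero and global elements, and prove join-preservation by lifting a pair of $\Dd$-classes to a commuting pair below a common lift of their join (covering property plus normality) and then applying preservation of suprema of commuting elements. The only cosmetic difference is that the paper additionally identifies $y\vee y'$ with the chosen lift $z$ via Proposition~\ref{prp:basic}iii, whereas you bypass this by invoking Lemma~\ref{lma:sheafppty}i directly, which is equally valid.
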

\begin{proof}By Proposition~\ref{prp:basic}ii, the induced morphism $\phi/\Dd$ is a morphism of meet-semilattices. Since $\phi$ preserves zero and global elements, $\phi/\Dd$ preserves bottom and top elements. We shall now show that $\phi/\Dd$ preserves binary joins.

Let $[x]_\Dd$ and $[x']_\Dd$ be two $\Dd$-classes. Then there is a representative $z\in X$ of the $\Dd$-class $[x]_\Dd\vee[x']_\Dd$ and, by the covering property of $X\to X/\Dd$, there are unique representatives $y,y'$ for $[x]_\Dd,[x']_\Dd$ such that $y\leq z$ and $y'\leq z$.  By normality, we have $yy'=y'y$. Since $X$ is a distributive band, the supremum $y\vee y'$ exists in $X$, and by construction $y\vee y'\leq z$ and $[y\vee y']_\Dd=[z]_\Dd$ so that $y\vee y'=z$. Moreover, since $y$ and $y'$ commute so do $\phi(y)$ and $\phi(y')$ so that, by distributivity of $Y$, the supremum $\phi(y)\vee\phi(y')$ exists and represents the join $[\phi(y)]_\Dd\vee[\phi(y')]_\Dd$ in $Y/\Dd$.

Since $\phi$ preserves suprema of commuting elements $\phi(y\vee y')$ equals $\phi(y)\vee\phi(y')$. Consequently, we have $\phi/\Dd([x]_\Dd\vee[x']_\Dd)=\phi/\Dd([x]_\Dd)\vee\phi/\Dd([x']_\Dd)$.\end{proof}

\subsection{Distributive skew lattices}\label{subsct:skew}

According to Leech \cite{Le1}, a \emph{skew lattice} is a set $S$ with two binary operations $\cm,\cj$ such that $(S,\cm)$ and $(S,\cj)$ are bands related by the following four \emph{absorption laws}
\begin{itemize}\item[(i)]$(y\cm x)\cj x=x=x\cm(x\cj y)$ (i.e. $y\cm x=y$ if and only if $y\cj x=x$);
\item[(ii)]$x\cj(x\cm y)=x=(y\cj x)\cm x$ (i.e. $x\cm y=y$ if and only if $x\cj y=x$).\end{itemize}
In particular, the order relation $y\leq x$ on $(S,\cm)$ is the \emph{opposite} of the order relation on $(S,\cj)$ so that Green's $\Dd$-relation yields on $S/\Dd$ a meet semilattice structure and a join semilattice structure, inducing opposite partial orders on $S/\Dd$. In other words, $S/\Dd$ is a lattice, the \emph{universal lattice reflection} of $S$, cf. \cite{Le1}. Both bands $(S,\cm)$ and $(S,\cj)$ are regular, and $(S,\cm)$ is left (resp. right) regular if and only if $(S,\cj)$ is right (resp. left) regular in which case the skew lattice $S$ is said to be \emph{left-handed} (resp. \emph{right-handed}). Every skew lattice is the fibre product of its left- and right-handed reflections over its lattice reflection, cf. \cite{Le1}.

A skew lattice $(S,\cm,\cj)$ is called \emph{normal} if $(S,\cm)$ is a normal band. This amounts to the property that the principal downsets ${\downarrow} z$ are \emph{lattices} for all $z\in S$. A skew lattice $S$ is called \emph{symmetric} if $x\cm y=y\cm x$ precisely when $x\cj y=y\cj x$.

We call a skew lattice $S$ \emph{distributive} (resp. \emph{Boolean}) if it is a \emph{normal} and \emph{symmetric} skew lattice whose lattice reflection $S/\Dd$ is \emph{bounded distributive} (resp. \emph{Boolean}). Both classes of skew lattices form \emph{varieties}, cf. \cite{Le1,Le2}.

Leech \cite{Le1} uses a weaker notion of distributivity for which neither symmetry nor normality is required, yet coming equipped with a distributive lattice reflection. We refer the interested reader to \cite{KLP} for a comparison of several notions of distributivity. Our notion is called \emph{strongly distributive} or \emph{meet bidistributive} in \cite{BCGGK, CKLS, KLP, Le3} and it is essential to our results. Since we only use this strong form of distributivity, we suppress additional adjectives, hoping no confusion will arise. Note also that we assume the lattice reflection to be \emph{bounded}. This is not standard and could be avoided, but it simplifies the duality theoretic aspects of our work.

Below, we give a new proof of Leech's equational characterisation of distributive skew lattices among normal skew lattices, cf. Proposition~\ref{prp:distributive}. On the way, we characterise symmetric skew lattices among normal skew lattices, cf. Lemma~\ref{lma:symmetric}.

\begin{lma}\label{lma:normal}A normal skew lattice $S$ has a distributive lattice reflection $S/\Dd$ if and only if for all $x,y,z\in S$,$$z\cm(x\cj y)\cm z=(z\cm x\cm z)\cj(z\cm y\cm z).$$

A normal left-handed (resp. right-handed) skew lattice has a distributive lattice reflection if and only if for all $x,y,z\in S$,$$z\cm(x\cj y)=(z\cm x)\cj(z\cm y)\quad (\text{resp.}\quad (x\cj y)\cm z=(x\cm z)\cj(y\cm z))$$\end{lma}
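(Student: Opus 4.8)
The plan is to reduce everything to the behaviour of the reflection map $q\colon S\to S/\Dd$. Recall that for a skew lattice the $\Dd$-relations of $(S,\cm)$ and of $(S,\cj)$ coincide, so that $q$ is simultaneously a homomorphism $q(a\cm b)=q(a)\wedge q(b)$ and $q(a\cj b)=q(a)\vee q(b)$ onto the lattice $S/\Dd$, and $q$ is surjective. The single structural input I will use is normality: by hypothesis each principal downset ${\downarrow}z=z\cm S\cm z$ is a (commutative) lattice, hence closed under both $\cm$ and $\cj$. I first record that the restriction $q|_{{\downarrow}z}\colon{\downarrow}z\to S/\Dd$ is injective: if $a,b\in{\downarrow}z$ satisfy $a\,\Dd\,b$, then, $a$ and $b$ commuting, $a\cm b=a\cm b\cm a=a$ and symmetrically $b\cm a=b$, so $a=a\cm b=b\cm a=b$ (equivalently, invoke order-discreteness of $\Dd$-classes, Proposition~\ref{prp:basic}iv). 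Note also that $z\cm x\cm z\in{\downarrow}z$ for every $x$.

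The converse implication is then immediate. Assuming the identity $z\cm(x\cj y)\cm z=(z\cm x\cm z)\cj(z\cm y\cm z)$, apply $q$ and use that it preserves $\cm$ and $\cj$; the two sides become $[z]\wedge([x]\vee[y])$ and $([z]\wedge[x])\vee([z]\wedge[y])$ in $S/\Dd$. As $q$ is surjective, $[x],[y],[z]$ range over all of $S/\Dd$, so $S/\Dd$ satisfies the distributive law.

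For the forward implication, suppose $S/\Dd$ is distributive. Both members of the identity lie in ${\downarrow}z$: the left-hand side manifestly, and the right-hand side because ${\downarrow}z$ is closed under $\cj$. Their images under $q$ are exactly $[z]\wedge([x]\vee[y])$ and $([z]\wedge[x])\vee([z]\wedge[y])$, which agree by distributivity of $S/\Dd$. Since $q$ is injective on ${\downarrow}z$, the two members are already equal in $S$, which is the desired identity.

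Finally, the left- and right-handed statements are specialisations of the general one. In a left-handed skew lattice $(S,\cm)$ is left regular, so $z\cm x\cm z=z\cm x$; substituting this into each factor turns the general identity into $z\cm(x\cj y)=(z\cm x)\cj(z\cm y)$. Dually, right-handedness gives $z\cm x\cm z=x\cm z$, turning it into $(x\cj y)\cm z=(x\cm z)\cj(y\cm z)$. Thus each one-sided identity is literally the general identity rewritten under the relevant regularity law, and its equivalence to distributivity of $S/\Dd$ follows from the general case. The only genuinely non-formal step, and the single place where normality is essential, is the injectivity of $q|_{{\downarrow}z}$; everything else is bookkeeping with the homomorphism $q$.
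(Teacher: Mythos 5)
Your proof is correct and follows essentially the same route as the paper's: both arguments localize to the principal downset ${\downarrow}z$, use normality to conclude that ${\downarrow}z$ is a lattice on which the reflection $S\to S/\Dd$ is injective, and then transfer distributivity back and forth along the reflection. You merely make explicit what the paper leaves implicit, namely the injectivity of the reflection on ${\downarrow}z$ and the regularity rewrites $z\cm x\cm z=z\cm x$ (resp. $z\cm x\cm z=x\cm z$) that specialize the general identity to the left-handed (resp. right-handed) one.
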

\begin{proof}The three elements $z\cm (x\cj y)\cm z$, $z\cm x\cm z$ and $z\cm y\cm z$ belong to the downset ${\downarrow} z=z\cm S\cm z$. For a normal skew lattice this downset is a lattice upon which the reflection $S\to S/\Dd$ is injective. This establishes the identity if $S/\Dd$ is distributive. Conversely, the identity clearly implies that $S/\Dd$ is distributive.

The left- (resp. right-handed) case follows immediately from the general case.\end{proof}

\begin{exm}[Non-symmetric skew lattices and free regular bands]\label{exm:freerightregular}--\vspace{1ex}

For a finite set $A$, let $S(A)\subset A^*$ be the set of those words of the free monoid $A^*$ on $A$ containing each element of $A$ at most once. This inclusion admits a retraction $\rho:A^*\to S(A)$ (in sets) assigning to a word $w\in A^*$ the subword $\rho(w)\in S(A)$ obtained from $w$ by keeping the \emph{rightmost} occurrence of each element of $A$ in $w$. This induces a \emph{non-commutative join} operation on $S (A)$:$$s\cj_r t=\rho(st)$$turning $(S(A),\cj_r)$ into a right regular band. In fact, this is the free unital right regular band generated by $A$, cf. \cite[Theorem 10]{Ki}.

We can think of $S(A)$ as the set of \emph{linearly ordered subsets} of $A$. For two such linearly ordered subsets $B,C$ of $A$, the join  $B\cj_r C$ is then the subset $B\cup C$ equipped with the ordinal sum of the induced orders on $B\setminus C$ and $C$ respectively.

The semilattice quotient $S(A)/\Dd$ is isomorphic to the powerset $\Pp(A)$ of $A$, equipped with union as binary operation. Note that the partial order on $(\Pp(A),\cup)$ is the reverse of subset-inclusion, and the partial order on $(S(A),\cj_r)$ is the reverse of ``suffix'' relation, i.e. the reverse of right interval inclusion.

There is a ``dual'' left regular operation $\cj_l$ on $S(A)$, obtained by means of the retraction $\lambda:A^*\to S(A)$ keeping the \emph{leftmost} occurrence of each element of $A$. This yields $(S(A),\cj_l)$, the free unital left regular band on $A$, cf. \cite[Theorem 10]{Ki}.

There is also a \emph{non-commutative meet} operation of $S(A)$, defined by letting $s\cm_r t$ be the subword of $t$
 consisting of those elements of $A$ which occur in $s$ as well. Viewed as linearly ordered subsets, $B\cm_r C$ is the intersection $B\cap C$ ordered according to $C$. This makes $(S(A),\cm_r)$ into a right regular band, where this time the semilattice quotient $S(A)/\Dd$ is isomorphic to $(\Pp(A),\cap)$ and the partial orders are subword inclusion, resp. monotone subset inclusion.

The quotient map $(S(A),\cm_r)\to(\Pp(A),\cap)$ is easily seen to be a \emph{covering} so that $(S(A),\cm_r)$ is a right normal band by Proposition~\ref{prp:YK}. This also follows directly from the definitions. The fibre over a subset $B\subseteq A$ of this quotient map corresponds to the set of all possible linear orderings of $B$.

It is then a straightforward verification that $S^r(A)=(S(A),\cm_r,\cj_l)$ is a \emph{right-handed} normal skew lattice whose lattice reflection is $(\Pp(A),\cap,\cup)$. This skew lattice is however \emph{not} symmetric. Elements that commute for $\cm_r$ do not necessarily commute for $\cj_l$, e.g.,  any two elements of $S(A)$ with disjoint images in $\Pp(A)$ commute for $\cm_r$ but do not so for $\cj_l$ unless one of them is the empty word.

A similar argument shows that the ``dual'' construction $S^l(A)=(S(A),\cm_l,\cj_r)$ yields a \emph{left-handed} normal skew lattice which is not symmetric.

The fibre product $\tilde{S}(A)=S^l(A)\times_{\Pp(A)} S^r(A)$ yields then a normal non-symmetric skew lattice. Note that its underlying band $(\tilde{S}(A),\cj)$ is the free unital regular band generated by $A$, cf. \cite[Theorem 11]{Ki}. \end{exm}

Despite the interesting example just treated, for our intended sheaf-theoretical interpretation, the symmetry of a skew lattice will play a crucial role. The following lemma reformulates symmetry of a normal skew lattice in equational terms. In \cite[Theorem 4.2]{CKLS} a related (but more complicated) equational description of general symmetric skew lattices is given.

\begin{lma}\label{lma:symmetric}
A normal skew lattice $S$ is symmetric if and only if for all $x,y\in S$,
\[
(x\cm y)\cj x=x\cm(y\cj x)\text{ and }(x\cj y)\cm x=x\cj(y\cm x).
\]
\end{lma}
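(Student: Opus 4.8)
The plan is to exploit the evident $(\cm,\cj)$-duality of the statement. Interchanging the two operations is an involution on skew lattices that fixes the underlying set, carries the first identity $(x\cm y)\cj x=x\cm(y\cj x)$ to the second $(x\cj y)\cm x=x\cj(y\cm x)$, and leaves the defining condition of symmetry unchanged (swapping $\cm$ and $\cj$ turns $x\cm y=y\cm x\iff x\cj y=y\cj x$ into itself). It therefore suffices to prove the two implications ``both identities $\Rightarrow$ symmetric'' and ``symmetric $\Rightarrow$ first identity'', the remaining half of each biconditional following by duality. Throughout I would write $\leq$ for the order of $(S,\cm)$, on which $\cj$ induces the opposite order, and use three facts from the normal setting: by Proposition~\ref{prp:YK} the reflection $S\to S/\Dd$ is a covering; by Proposition~\ref{prp:basic}(iv) the $\Dd$-classes are order-discrete; and, since each principal downset ${\downarrow}z=z\cm S\cm z$ is a genuine lattice, Lemma~\ref{lma:commuting} lets me replace a product of two commuting elements by their meet (or join) inside that downset.

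For the direction ``both identities $\Rightarrow$ symmetric'', assume $x\cm y=y\cm x=:m$. First I would record that $m$ is the meet $x\wedge y$ in $(S,\leq)$, so $m\leq x$ and $m\leq y$, and hence $x\cj m=x$ and $y\cj m=y$ (comparable elements commute for both operations). Feeding the pairs $(x,y)$ and $(y,x)$ into the \emph{second} identity gives $(x\cj y)\cm x=x\cj(y\cm x)=x\cj m=x$ and $(y\cj x)\cm y=y\cj(x\cm y)=y\cj m=y$. Combining these with the absorption laws (i)--(ii), which already yield $(x\cj y)\cm y=y$, $(y\cj x)\cm x=x$, $x\cm(x\cj y)=x$ and $(y\cj x)\cm x=x$, shows that $x\cj y$ and $y\cj x$ are two elements of the common $\Dd$-class $[x]_\Dd\vee[y]_\Dd$ standing in the same order relations to $x$ and to $y$; the goal is to collapse them to a single element and conclude $x\cj y=y\cj x$. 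The dual implication $x\cj y=y\cj x\Rightarrow x\cm y=y\cm x$ then follows verbatim from the first identity.

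For the direction ``symmetric $\Rightarrow$ first identity'', I would begin by computing in the lattice reflection: writing $a=[x]_\Dd$ and $b=[y]_\Dd$, lattice absorption gives $(a\wedge b)\vee a=a=a\wedge(b\vee a)$, so both sides of $(x\cm y)\cj x=x\cm(y\cj x)$ lie in the class $[x]_\Dd$. Because $\Dd$-classes are order-discrete, it then suffices, via the covering property of $S\to S/\Dd$, to realise both sides as the \emph{unique} lift of $[x]_\Dd$ beneath one fixed element $z$ of class $[x]_\Dd\vee[y]_\Dd$. Here I would push the whole computation into the principal downset ${\downarrow}z$, which normality turns into an ambient lattice containing $x$, $x\cm y$ and $y\cj x$; symmetry is what guarantees that the pairs occurring (those commuting for $\cm$) commute \emph{for $\cj$ as well}, so that Lemma~\ref{lma:commuting} applies and the identity degenerates to the lattice absorption law computed above.

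The step I expect to be the main obstacle is exactly this bridge between a \emph{conditional} hypothesis (symmetry speaks only about commuting pairs) and \emph{unconditional} identities that must hold for all $x,y$, including non-commuting ones. The decisive device is the covering $S\to S/\Dd$: it reduces equality of two elements to equality of their $\Dd$-classes together with a common bound inside a principal downset, and normality provides the lattice structure there. The delicate point — in both directions — is to verify that symmetry delivers precisely the commutations needed to invoke Lemma~\ref{lma:commuting} at each stage, since it is this, and not mere normality, that forces the two $\Dd$-equivalent candidates ($x\cj y$ versus $y\cj x$, or the two sides of the identity) to coincide.
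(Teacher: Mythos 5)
Your opening reduction is where the proof breaks down: interchanging $\cm$ and $\cj$ is \emph{not} an involution on the class of \emph{normal} skew lattices. Normality is by definition a condition on $(S,\cm)$ alone (its principal downsets are commutative), so the operation-swap carries normal skew lattices to ``co-normal'' ones (those with $(S,\cj)$ normal), a genuinely different class. A minimal witness is the primitive right-handed skew Boolean algebra $S=\{0,a,b\}$ with $a\cm b=b$, $b\cm a=a$, $a\cj b=a$, $b\cj a=b$: it is normal and symmetric, yet $(S,\cj)$ is not normal, since $0\cj a\cj b\cj 0=a$ while $0\cj b\cj a\cj 0=b$. Every step of your argument for ``symmetric $\Rightarrow$ first identity'' leans on normality of $(S,\cm)$ (downsets being lattices, the covering $S\to S/\Dd$ of Proposition~\ref{prp:YK}), so dualizing it proves ``symmetric $\Rightarrow$ second identity'' only for \emph{co-normal} skew lattices; for the normal ones the lemma speaks about, the second identity is never established. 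The paper sidesteps duality entirely by splitting into the left- and right-handed cases via the fibre-product decomposition, where one of the two identities holds automatically (both sides equal $x$) and only the other carries content.

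There is a second, independent gap in ``both identities $\Rightarrow$ symmetric'': as written you invoke only the \emph{second} identity (plus absorption), and the facts you extract provably cannot suffice. Indeed, in any right-handed skew lattice the second identity is automatic, so the paper's Example~\ref{exm:freerightregular} (the right-handed normal skew lattice $S^r(A)$, which is \emph{not} symmetric) satisfies everything you derive; hence no ``collapsing'' of $x\cj y$ and $y\cj x$ can follow from those relations. Concretely, you obtain $x\leq x\cj y$ and $y\leq y\cj x$ but only the one-sided relations $(x\cj y)\cm y=y$ and $(y\cj x)\cm x=x$, and in a normal skew lattice two distinct $\Dd$-equivalent elements can lie above a common element, so ``same order relations to $x$ and $y$'' does not force equality. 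The repair must use the \emph{first} identity: it gives $y\cm(x\cj y)=(y\cm x)\cj y=(x\cm y)\cj y=y$, hence $y\leq x\cj y$; now $x\cj y$ is a common upper bound of $x$ and $y$ in the class $[x]_\Dd\vee[y]_\Dd$, and the converse half of Lemma~\ref{lma:commuting}, applied to the regular band $(S,\cj)$ with $z=x\cj y$, yields $x\cj y=y\cj x$ at once. Your sketch of ``symmetric $\Rightarrow$ first identity'' does point in the right direction (it is essentially the paper's argument, which exhibits $y\cj x$ as a common upper bound of both sides, using that $y$ and $(x\cm y)\cj x$ $\cm$-commute and hence, by symmetry, $\cj$-commute), but there too the decisive computation is left undone.
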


\begin{proof}
Note that since $(S,\cj)$ is regular, if $x\cj y=y\cj x$ then, by Lemma~\ref{lma:commuting}, the join $x\vee y$ exists, and $x,y$ belong to the downset ${\downarrow}(x\vee y)$ so that normality of $(S,\cm)$ implies $x\cm y=y\cm x$. That is, in any normal skew lattice we have that $x\cj y=y\cj x$ implies $x\cm y=y\cm x$. It is thus sufficient to deal with the other implication of symmetry.

Also, it suffices to prove the statement separately for left- and right-handed skew lattices. Since the proofs are symmetric, we will assume our skew lattice is right-handed. In this case, the right identity holds because both sides equal $x$: By left regularity of $(S,\cj)$, $x\leq x\cj y$ and thus by right regularity of $(S,\cm)$, $x=(x\cj y)\cm x$; symmetrically, $x=x\cj(y\cm x)$.

Thus we must show that for a right-handed normal skew lattice, the left identity is equivalent to $x\cm y=y\cm x$ implies $x\cj y=y\cj x$ for all $x,y\in S$. To this end, assume first that the left identity holds. Then, if $x\cm y=y\cm x$, we get by absorption $x=(y\cm x)\cj x=(x\cm y)\cj x=x\cm(y\cj x)$ so that $x\leq y\cj x$. Since for a right-handed skew lattice $S$, the band $(S,\cj)$ is left regular, we also have $y\leq y\cj x$. By normality the downset ${\downarrow}(y\cj x)$ is a lattice so that $y\cj x$ represents the supremum $y\vee x$. Exchanging the roles of $x$ and $y$ we get $x\cj y=x\vee y=y\vee x=y\cj x$, i.e. symmetry.

Assume conversely that $S$ is symmetric. Since $S$ is a right handed skew lattice, we have $x\cm y\leq(x\cm y)\cj x$ and $x\cm y\leq y$ and $[x\cm y]_\Dd$=$[(x\cm y)\cj x]_\Dd\wedge[y]_\Dd$. Therefore, Lemma~\ref{lma:commuting} implies that $y$ and $(x\cm y)\cj x$ $\cm$-commute so that by symmetry they also $\cj$-commute, yielding
\begin{align*}
((x\cm y)\cj x)\cj(y\cj x)&=[((x\cm y)\cj x)\cj y]\cj x\\
				  &=[y\cj ((x\cm y)\cj x)]\cj x\\
				  &=[y\cj (x\cm y)]\cj x=y\cj x.
\end{align*}
Whence $(x\cm y)\cj x\leq y\cj x$. Since $(S,\cm)$ is right regular, we also have $x\cm(y\cj x)\leq y\cj x$ so that the left identity follows from normality.\end{proof}

\begin{prp}[cf. \cite{Le3}, Theorem 2.5]\label{prp:distributive}A normal skew lattice $S$ is symmetric and has a distributive lattice reflection if and only if for all $x,y,z\in S$,$$x\cm(y\cj z)=(x\cm y)\cj(x\cm z)\text{ and }(x\cj y)\cm z=(x\cm z)\cj(y\cm z).$$\end{prp}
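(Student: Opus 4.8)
The plan is to prove the two implications separately, beginning with the routine one. Suppose $S$ satisfies both distributivity identities. Applying the universal lattice reflection $S\to S/\Dd$, which by Proposition~\ref{prp:basic} is a homomorphism of skew lattices, carries either identity to the classical distributive law $[x]\cm([y]\cj[z])=([x]\cm[y])\cj([x]\cm[z])$ in $S/\Dd$, so the reflection is a bounded distributive lattice. For symmetry I would read off the two equations of Lemma~\ref{lma:symmetric}: putting $z=x$ in the first identity and using $x\cm x=x$ gives $x\cm(y\cj x)=(x\cm y)\cj x$, and putting $z=x$ in the second gives $(x\cj y)\cm x=x\cj(y\cm x)$. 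By Lemma~\ref{lma:symmetric} these say exactly that the normal skew lattice $S$ is symmetric, which finishes this direction.

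For the converse I would assume $S$ normal and symmetric with distributive reflection and establish the two identities. Both are equational, and every skew lattice embeds into the product of its left- and right-handed reflections over $S/\Dd$ (Section~\ref{subsct:skew}); these reflections are again normal and share the same distributive reflection, so it suffices to argue one-handedly, and by duality I treat the right-handed case. Here Lemma~\ref{lma:normal} already supplies the second identity $(x\cj y)\cm z=(x\cm z)\cj(y\cm z)$ for free from distributivity of $S/\Dd$, so everything reduces to the first identity $x\cm(y\cj z)=(x\cm y)\cj(x\cm z)$.

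To obtain it I would work inside the principal downset ${\downarrow}(y\cj z)$. By normality this is a bounded distributive lattice, and by Proposition~\ref{prp:basic} the relation $\Dd$ restricts to equality on it, so an element of ${\downarrow}(y\cj z)$ is completely determined by its class in $S/\Dd$. Distributivity of the reflection makes the two sides of the identity share the single class $([x]\cm[y])\cj([x]\cm[z])=[x]\cm([y]\cj[z])$; hence it is enough to check that both sides genuinely lie in ${\downarrow}(y\cj z)$. For the left-hand side this is immediate in the right-handed case, where $x\cm a\leq a$; for the right-hand side one has $x\cm y\leq y\leq y\cj z$ and $x\cm z\leq z\leq y\cj z$, while a principal downset is closed under $\cj$ (the join being computed there as the supremum). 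Both sides therefore belong to ${\downarrow}(y\cj z)$ and, having equal $\Dd$-class, they coincide.

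The delicate point — and the place where symmetry does the real work — is guaranteeing this common-downset containment so that the join $(x\cm y)\cj(x\cm z)$ may be computed as a genuine supremum inside the lattice ${\downarrow}(y\cj z)$ rather than drifting into a parallel $\Dd$-class. In the two-sided situation the convenient inequality $x\cm a\leq a$ fails, and it is symmetry, in the guise of the identities of Lemma~\ref{lma:symmetric}, that synchronises the meet order with the join order and keeps both sides within a single principal downset; this is also what legitimises passing to the one-handed reflections in the first place. I expect establishing this containment to be the main obstacle, after which the normality-plus-distributivity argument above closes the first identity, and the second follows dually.
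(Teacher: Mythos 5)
Your forward direction is correct and is essentially the paper's own argument (specialise $z=x$ in the distributive laws and invoke Lemma~\ref{lma:symmetric}, apply the reflection for distributivity); the only quibble is that boundedness of $S/\Dd$ does not follow from the identities, but it is not asserted in the statement either. The converse, however, has a genuine gap at the containment step. In a right-handed skew lattice $(S,\cj)$ is \emph{left} regular, so only the \emph{left} factor of a join lies below it: $(y\cj z)\cj y=y\cj z$ gives $y\leq y\cj z$, whereas $z\cj(y\cj z)=(z\cj y)\cj z=z\cj y$, so $z\leq y\cj z$ holds if and only if $y\cj z=z\cj y$. Your chain $x\cm z\leq z\leq y\cj z$ therefore fails at its last link. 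Concretely, take two distinct $\Dd$-equivalent elements $y,z$ (e.g.\ two distinct global sections of a sheaf, in the paper's model where $\cm$ is restriction of the right factor and $\cj$ is gluing with the left factor prevailing on overlaps): then $y\cj z=y$, and $z\leq y\cj z=y$ would force $z=y$ because $\Dd$-classes are order-discrete (Proposition~\ref{prp:basic}). Taking $x=y$ in this example also gives $x\cm z=z\not\leq y\cj z$, so the containment $(x\cm y)\cj(x\cm z)\leq y\cj z$ --- which is in fact true --- cannot be obtained termwise. Note that your paragraph establishing the containment never uses symmetry (your only appeal to it is closure of the downset under $\cj$); if a symmetry-free termwise argument worked, the left law would hold in the non-symmetric normal skew lattices of Example~\ref{exm:freerightregular}, contradicting your own forward direction. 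Your closing paragraph also misplaces the difficulty: the problem is not specific to the two-sided situation, it already occurs in the right-handed case you reduced to.

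Your global strategy is nevertheless the paper's: reduce to the right-handed case, get $(x\cj y)\cm z=(x\cm z)\cj(y\cm z)$ from Lemma~\ref{lma:normal}, then identify the two sides of the left law as $\Dd$-equivalent elements of a common principal downset, on which the reflection is injective by normality. The difference lies in the choice of common upper bound. The paper does not bound both sides by $y\cj z$ via termwise estimates; its chain runs through $(x\cm y)\cj z$: one has $(x\cm y)\cj(x\cm z)\leq(x\cm y)\cj z$ from $x\cm z\leq z$ together with monotonicity of $\cj$ in its \emph{right} argument (which does follow from left regularity of $\cj$: $u\cj a\cj u\cj b=u\cj a\cj b=u\cj b$ when $a\leq b$), and the comparison of $(x\cm y)\cj z$ with $x\cm(y\cj z)$ is precisely where Lemma~\ref{lma:symmetric} is invoked. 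That comparison is the real content of the implication and is what your estimate $x\cm z\leq z\leq y\cj z$ was silently standing in for; repairing your proof means replacing the bound $y\cj z$ by one reachable from both sides and supplying this symmetry step.
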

\begin{proof}Again, left- and right-handed skew lattices can be treated separately, and the proofs are symmetric to each other. We give the proof for right-handed skew lattices. Assume first that the two distributivity laws hold. Then by Lemma~\ref{lma:normal} the right law yields distributivity of the lattice reflection, and by Lemma~\ref{lma:symmetric} the left law with $z=x$ yields symmetry.

Conversely, if the skew lattice is symmetric and has distributive lattice reflection, then Lemma~\ref{lma:normal} yields the right law while Lemma~\ref{lma:symmetric} implies$$(x\cm y)\cj(x\cm z)\leq(x\cm y)\cj z=x\cm(y\cj z)\leq y\cj z$$and the left law follows from normality, the two sides being $\Dd$-equivalent.\end{proof}

\begin{cor}\label{cor:forgetful}Every distributive (resp. Boolean) skew lattice $(S,\cm,\cj)$ has an underlying distributive (resp. Boolean) band $(S,\cm)$.\end{cor}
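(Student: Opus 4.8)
The plan is to check the three clauses of Definition~\ref{dfn:distband} for the meet band $(S,\cm)$ directly, reading two of them off the hypotheses and concentrating the actual work on the supremum clause (ii), where symmetry of $S$ is the decisive ingredient.

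First I would dispatch clauses (i) and (iii). Since $S$ is by assumption a \emph{normal} skew lattice, $(S,\cm)$ is a normal band, and since Green's $\Dd$-relation of a skew lattice is at once the $\Dd$-relation of $(S,\cm)$ and of $(S,\cj)$, the universal semilattice reflection $(S,\cm)/\Dd$ of Proposition~\ref{prp:basic}(ii) coincides with the meet-semilattice reduct of the lattice $S/\Dd$. As $S/\Dd$ is a bounded distributive (resp.\ Boolean) lattice by hypothesis, clause (iii) holds verbatim, and in particular $S/\Dd$ has a bottom element. The zero element demanded by clause (i) is the zero of the skew lattice lying over this bottom: being two-sided absorbing for $\cm$, it is exactly a bottom of $(S,\cm,\leq)$ in the sense of Lemma~\ref{lma:idemGreen}(i). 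I would flag here that this genuine zero is supplied by the bounded structure of $S/\Dd$; normality and symmetry by themselves only determine the bottom $\Dd$-class and do not, on their own, collapse it to a single element.

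The substance of the proof is clause (ii). Let $x,y\in S$ commute for $\cm$. Because $S$ is \emph{symmetric}, $x$ and $y$ then commute for $\cj$ as well. Now $(S,\cj)$ is a regular band, so Lemma~\ref{lma:commuting} applies to it and shows that $x\cj y$ represents the meet of $x$ and $y$ in the order $\leq_\cj$ of the join band. But the absorption laws make $\leq_\cj$ the opposite of the order $\leq$ underlying $(S,\cm)$, so a meet for $\leq_\cj$ is a supremum for $\leq$. Hence $x\cj y$ is precisely the supremum $x\vee y$ of the commuting pair in $(S,\cm)$, which is clause (ii). Assembling the three clauses shows that $(S,\cm)$ is a distributive band, and in the Boolean case the identical argument applies once $S/\Dd$ is Boolean.

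I expect the main obstacle to be not a computation but keeping the two partial orders straight: one must invoke Lemma~\ref{lma:commuting} for the band $(S,\cj)$ rather than $(S,\cm)$, note that its output is a meet for $\leq_\cj$, and use the reversal $\leq_\cj\,=\,\leq^{\op}$ to turn this into the supremum required by Definition~\ref{dfn:distband}(ii). Conceptually the crux is the use of symmetry to pass from $\cm$-commutation to $\cj$-commutation, without which Lemma~\ref{lma:commuting} could not be applied; Example~\ref{exm:freerightregular} exhibits normal skew lattices with distributive reflection where this symmetry is absent.
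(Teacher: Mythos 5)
Your treatment of clause (ii) is correct and is exactly the paper's own argument: symmetry upgrades $\cm$-commutation to $\cj$-commutation, Lemma~\ref{lma:commuting} applied to the regular band $(S,\cj)$ exhibits $x\cj y$ as a meet for the natural order of $(S,\cj)$, and since that order is opposite to $\leq$ this meet is the supremum demanded by Definition~\ref{dfn:distband}(ii). Your insistence on keeping the two orders apart is the right care to take, and clause (iii) is likewise unproblematic.

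The genuine gap is in clause (i), at precisely the step you flagged: the assertion that the zero element ``is supplied by the bounded structure of $S/\Dd$'' is false. Boundedness of the reflection produces a bottom $\Dd$-class, not a bottom element, and nothing in the definition of a distributive skew lattice (normal, symmetric, bounded distributive reflection) collapses that class to a point. Concretely, let $R=\{1,2\}$ be the right rectangular skew lattice with $x\cm y=y$ and $x\cj y=x$, and put $S=R\times\mathbf{2}$ where $\mathbf{2}=\{0<1\}$, so that $(i,x)\cm(j,y)=(j,x\wedge y)$ and $(i,x)\cj(j,y)=(i,x\vee y)$. Then $S$ is a right-handed normal skew lattice; it is symmetric, because for either operation $(i,x)$ and $(j,y)$ commute exactly when $i=j$; and $S/\Dd\cong\mathbf{2}$ is bounded distributive, indeed Boolean. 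So $S$ is a distributive (even Boolean) skew lattice in the sense of the paper, yet the natural partial order of $(S,\cm)$ is the disjoint union of the two chains $(i,0)<(i,1)$, so $(S,\cm)$ has no zero element and fails Definition~\ref{dfn:distband}(i). Hence no argument along the lines you propose can close clause (i); your own remark that normality and symmetry only pin down the bottom $\Dd$-class was the correct instinct, and it applies just as well to boundedness of the reflection.

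In fairness, the paper's one-line proof (``it is enough to check condition~\ref{dfn:distband}ii'') silently assumes the very same point, so the defect lies in the statement rather than in your overall strategy: the corollary holds only if a zero element is built into the definition of a distributive (Boolean) skew lattice, as it is in Leech's skew Boolean algebras --- the identification invoked before Proposition~\ref{prp:Boolean} --- and as is automatic for the skew lattices of local sections arising in Theorems~\ref{thm:main1} and~\ref{thm:main3}, where the section over the empty set provides the zero. With that amendment clause (i) becomes trivial and your proof is complete and identical in substance to the paper's.
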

\begin{proof}It is enough to check condition~\ref{dfn:distband}ii of a distributive $\cm$-band. For any two $\cm$-commuting elements, symmetry of the skew lattice implies $x\cj y=y\cj x$ whence, by Lemma~\ref{lma:commuting}, the latter element represents the supremum $x\vee y$.\end{proof}

For ease of terminology we shall say right Boolean/distributive skew lattice for a right-handed Boolean/distributive skew lattice.
 In view of Lemma~\ref{lma:normal}, right Boolean skew lattices are the same as \emph{skew Boolean algebras} in the terminology of Leech \cite{Le2}. The following proposition shows that a right Boolean skew lattice is completely determined by its underlying right Boolean band. This has been shown by Kurdryatseva-Lawson \cite{KL} in a slightly more general context (not requiring the existence of global elements).

\begin{prp}[cf. \cite{KL}, Theorem 1.11]\label{prp:Boolean}The forgetful functor from right Boolean skew lattices to right Boolean bands is an equivalence of categories.\end{prp}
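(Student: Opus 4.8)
The plan is to show that the forgetful functor $U$ is faithful, full and essentially surjective. Faithfulness is immediate, since $U$ does not alter the underlying map. By Corollary~\ref{cor:forgetful} the functor is well defined, so all the content lies in reconstructing the join $\cj$ from the underlying right Boolean band $(S,\cm)$ and showing this reconstruction is natural. The key observation is that for $x,y$ in a right Boolean band $X$ one has $x\cm y\leq y$ (by right regularity and idempotence), and that the principal downset ${\downarrow}y=yXy$ is a \emph{Boolean} lattice: it is commutative by normality, and via the covering $X\to X/\Dd$ of Proposition~\ref{prp:YK} it is isomorphic to the Boolean downset ${\downarrow}[y]_\Dd$, with meet $\cm$, join the commuting supremum, bottom the zero element and top $y$. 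I would therefore define $y\ominus x$ to be the complement of $x\cm y$ in the Boolean lattice ${\downarrow}y$, and set $x\cj y:=x\vee(y\ominus x)$. This supremum exists because $x$ and $y\ominus x$ commute: their $\Dd$-images meet in the bottom of $X/\Dd$, so the zero element witnesses the converse hypothesis of Lemma~\ref{lma:commuting}, forcing $x(y\ominus x)=0=(y\ominus x)x$.

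For essential surjectivity I would verify that $(X,\cm,\cj)$ is a right Boolean skew lattice recovering $X$. This is cleanest in the representation $X\cong\el_{X/\Dd}(F)$ of Remark~\ref{rmk:globalpresheaves}, under which the formula becomes an explicit gluing: writing elements as pairs $(a,s)$ with $a\in L:=X/\Dd$ and $s\in F(a)$, and using that $L$ is Boolean, the element $x\cj y$ over $a\vee b$ is the unique section restricting to $s$ on $a$ and to $t$ on $b\wedge\neg a$ (these agree over $a\wedge(b\wedge\neg a)=0$, hence glue by the etale property encoded in Definition~\ref{dfn:distband}(ii)). In this description one checks directly that $\cj$ is idempotent, associative and left regular — left regularity being automatic, since the left support already contains the right one — and that the two absorption laws hold: $x\cm(x\cj y)=x$ because $x\cj y$ restricts to $s$ on $a$, and $(y\cj x)\cm x=x$ because $\cm$ reads its value off the right argument. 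Symmetry reduces to $x\cm y=y\cm x\iff x\cj y=y\cj x$, both sides being equivalent to $s$ and $t$ agreeing over $a\wedge b$; normality of $(X,\cm)$ is given, and the lattice reflection is $L$, which is Boolean. Hence $(X,\cm,\cj)$ is a right Boolean skew lattice whose underlying band is $X$.

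For fullness, let $f\colon(S,\cm)\to(S',\cm)$ be a morphism of right Boolean bands; by Definition~\ref{dfn:distband} it preserves $\cm$, the zero element and suprema of commuting elements. I would show $f$ preserves $\ominus$, whence $\cj$. Indeed $(x\cm y)\vee(y\ominus x)=y$ and $(x\cm y)\cm(y\ominus x)=0$; applying $f$ and using $f(x\cm y)=f(x)\cm f(y)$ shows that $f(y\ominus x)$ is \emph{a} complement of $f(x)\cm f(y)$ inside the Boolean lattice ${\downarrow}f(y)$. Complements in a Boolean lattice are unique, so $f(y\ominus x)=f(y)\ominus f(x)$ and therefore $f(x\cj y)=f(x)\vee f(y\ominus x)=f(x)\cj f(y)$; conversely a skew-lattice morphism preserves commuting suprema by symmetry, so the two morphism classes coincide. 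The main obstacle is the bookkeeping in essential surjectivity — establishing associativity and the absorption laws for the reconstructed $\cj$ — which the presheaf-of-elements representation reduces to elementary manipulations of restriction maps together with complementation in $L$; it is precisely here that passing from distributive to Boolean reflections is essential, as the complement $\neg a$ is what makes the gluing formula well defined.
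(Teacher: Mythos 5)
Your construction is in substance the paper's own: the paper defines $y\backslash x$ as the unique lift, along the covering $S\to S/\Dd$, of the relative complement $[y]_\Dd\backslash([x]_\Dd\wedge[y]_\Dd)$ and sets $x\cj y=x\vee(y\backslash x)$, which is exactly your $y\ominus x$ computed inside the Boolean lattice ${\downarrow}y$; your verification of the skew axioms in the representation $\el_{X/\Dd}(F)$ is a correct reformulation of the paper's equational checks, and your use of the zero element together with Lemma~\ref{lma:commuting} to see that $x$ and $y\ominus x$ commute is fine. There is, however, a genuine gap in the fullness step. What you prove there is that a morphism of right Boolean bands $f\colon(S,\cm)\to(S',\cm)$ preserves the \emph{reconstructed} join $x\vee(y\ominus x)$. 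Fullness of the forgetful functor requires that $f$ preserve the \emph{original} join operations of the given skew lattices $S$ and $S'$, and your chain of equalities $f(x\cj y)=f(x)\vee f(y\ominus x)=f(x)\cj f(y)$ silently uses the identity $x\cj y=x\vee(y\ominus x)$ for those original structures. That identity --- equivalently, the statement that a right Boolean band carries \emph{at most one} extension to a right Boolean skew lattice --- is precisely what the paper secures first, by quoting \cite[Lemma 2.4]{KL}. Nothing in your essential-surjectivity argument supplies it: constructing \emph{some} skew structure on every right Boolean band does not show that an arbitrary right Boolean skew lattice agrees with the reconstructed one over its underlying band, and without this, preservation of the reconstructed join says nothing about preservation of the given joins.

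The gap is fixable with ingredients you already have. In any right Boolean skew lattice, $x$ and $y\ominus x$ $\cm$-commute (your zero-element argument), hence $\cj$-commute by symmetry, so Lemma~\ref{lma:commuting} applied to the regular band $(S,\cj)$ (whose natural order is opposite to $\leq$) gives $x\cj(y\ominus x)=x\vee(y\ominus x)$. Likewise $x\cm y$ and $y\ominus x$ commute with supremum $y$, whence $y=(x\cm y)\cj(y\ominus x)$. Associativity of $\cj$ and the absorption law $x\cj(x\cm y)=x$ then yield
\[
x\cj y=x\cj\bigl((x\cm y)\cj(y\ominus x)\bigr)=\bigl(x\cj(x\cm y)\bigr)\cj(y\ominus x)=x\cj(y\ominus x)=x\vee(y\ominus x).
\]
Inserting this observation before your fullness argument closes the hole, after which your proof coincides with the paper's, carried out in the sheaf-of-elements picture rather than purely equationally.
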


\begin{proof} Note first that the right Boolean skew lattice $(S,\cm,\cj)$ is completely determined by its underlying right Boolean band $(S,\cm)$. Indeed, for $x,y\in S$ we have the identity $x\cj y=x\vee(y\backslash x)$ where $y\backslash x$ denotes the relative complement of $x$ in $y$, cf. \cite[Lemma 2.4]{KL}. Since $x$ and $y\backslash x$ $\cm$-commute, their supremum is already defined in the right Boolean band $(S,\cm)$ so that the binary join operation $\cj$ is completely determined by the underlying $\cm$-band structure.

Let us construct $(S,\cm,\cj)$ out of $(S,\cm)$. For $x,y\in S$, we have $x\cm y\leq y$, i.e. $x\cm y\in{\downarrow}y$. Therefore, $[x\cm y]_\Dd$ has a relative complement $[y]_\Dd\backslash[x]_\Dd\cm[y]_\Dd$ which lifts uniquely to $S$ and thus defines an element $y\backslash x\in S$ with the formal properties of a relative complement
In particular, $x$ and $y\backslash x$ $\cm$-commute so that they have a supremum in $S$, which by definition is our join $x\cj y$. We now have to check that the four absorption laws of a skew lattice hold and that the latter is symmetric. For the symmetry it is enough to establish that $x\cm y=y\cm x$ implies $x\cj y=y\cj x$. This implication holds because our construction of the binary join yields $x\cj y=x\vee y=y\cj x$ whenever $x$ and $y$ $\cm$-commute.

For the absorption laws we have to show that $y\cm x=y$ if and only if $y\cj x=x$ as well as $x\cm y=y$ if and only if $x\cj y=x$. The first equivalence holds because both sides express $y\leq x$. For the second equivalence, if $x=x\cj y$, i.e. $x=x\vee y\backslash (x\cm y)$, then by Lemma~\ref{lma:sheafppty}i we have $x\cm y=(x\vee y\backslash x\cm y)\cm y=(x\cm y)\vee(y\backslash x\cm y)=y$. Conversely, if $x\cm y=y$ then $x\cj y= x\vee y\backslash(x\cm y)=x\vee (y\backslash y)=x$.

Finally, Boolean band morphisms preserve the constructed binary joins as they preserve joins of commuting elements and their semilattice reflections preserve the bounded lattice operations and thus also the relative complements.\end{proof}

One may wonder whether Proposition~\ref{prp:Boolean} holds for right distributive skew lattices as well. This is not the case. The following lemma shows how to construct counter-examples. There are non-commutative right distributive bands with a single global element over any finite linear order with more than two elements (viewed as a distributive lattice).

\begin{lma}Any right distributive skew lattice with a unique global element is commutative.\end{lma}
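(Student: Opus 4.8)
The plan is to show that the unique global element $1$ is actually an absorbing element for the join $\cj$ and a two-sided identity for the meet $\cm$, which forces every element of $S$ to lie in the principal downset ${\downarrow}1$; commutativity then follows from normality. So let $1$ denote the unique global element, meaning $[1]_\Dd$ is the top element $\top$ of the bounded distributive lattice $S/\Dd$. Since $S\to S/\Dd$ is a covering (Proposition~\ref{prp:YK}) and the fibre over $\top$ consists exactly of the global elements, the hypothesis says this fibre is the singleton $\{1\}$. I will exploit this singleton fibre repeatedly.

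The key observation I would establish first is that $1$ is absorbing for $\cj$. For any $x\in S$ we have $[x\cj 1]_\Dd=[x]_\Dd\vee[1]_\Dd=\top$ because $\top$ is absorbing for the join of the lattice $S/\Dd$; similarly $[1\cj x]_\Dd=\top$. Hence both $x\cj 1$ and $1\cj x$ are global elements, so by uniqueness $x\cj 1=1=1\cj x$. Now I feed $1$ into the absorption laws. From law (i), $x=x\cm(x\cj y)$, taking $y=1$ gives $x=x\cm(x\cj 1)=x\cm 1$. From law (ii), $x=(y\cj x)\cm x$, taking $y=1$ gives $x=(1\cj x)\cm x=1\cm x$. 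Thus $x\cm 1=x=1\cm x$ for every $x$.

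The last two identities say precisely that $x\leq 1$ in $(S,\cm)$ for every $x\in S$, i.e. $S={\downarrow}1$. Since $S$ is a \emph{normal} skew lattice, every principal downset ${\downarrow}z$ is a lattice, in particular ${\downarrow}1$ is commutative for both $\cm$ and $\cj$. As $S={\downarrow}1$, the skew lattice $S$ is itself a lattice, hence commutative, which is the claim.

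I expect no serious obstacle here: the argument is short and, somewhat surprisingly, uses neither right-handedness nor symmetry nor distributivity of the reflection, only normality and the singleton top-fibre. The one point that requires care — and which I regard as the crux — is recognising that uniqueness of the global element makes $1$ absorbing for $\cj$ (via the fact that $\cj$ with a top-class element stays in the top class); once that is in hand, the two absorption laws do all the work mechanically. I would double-check only that the covering $S\to S/\Dd$ is genuinely surjective onto $\top$ (so that a global element exists at all, making ``unique'' meaningful), which holds because $S\to S/\Dd$ is a quotient map.
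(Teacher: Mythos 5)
Your proof is correct and is essentially the paper's argument: the paper likewise deduces $b\cj a=a$ from uniqueness of the global element together with join-preservation of the lattice reflection, then applies absorption to get $b=b\cm a$, and finishes with normality --- phrased there as the right-normal computation $c=b\cm c=b\cm c\cm a=c\cm b\cm a=c\cm b=b$ for a pair of $\Dd$-equivalent elements, where you instead globalize to $S={\downarrow}1$ and invoke the fact that principal downsets of a normal skew lattice are (commutative) lattices. The one small bonus of your version is that using \emph{both} absorption laws yields the two-sided identity $x\cm 1=x=1\cm x$, so, as you observe, right-handedness is never needed and the statement holds for any normal skew lattice with bounded reflection and a unique global element, whereas the paper's final computation explicitly uses right normality.
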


\begin{proof}Let $a$ be the unique global element and $b,c$ be two $\Dd$-equivalent elements, i.e. $b\cm c=c$ and $c\cm b=b$. Since $a$ is the unique global element and the lattice reflection preserves binary joins, we have $b\cj a=a$ and $c\cj a =a$. Consequently, by absorption, we have $b=b\cm(b\cj a)=b\cm a$ and similarily, $c=c\cm a$. Therefore, by right normality, $c=b\cm c=b\cm c\cm a=c\cm b\cm a=c\cm b=b$, i.e. all $\Dd$-classes are singleton and the skew lattice is commutative.\end{proof}

\section{Stone duality for spectral sheaves}\label{sct:duality}
This section contains our first main result: we establish a duality between global sheaves on spectral spaces and right distributive bands. Let us first review the relevant definitions concerning sheaves.

\subsection{Sheaves}A (set-valued) \emph{presheaf} on a topological space $\Xx$ is a functor

$$\Ff\colon\Omega(\Xx)^\op\to \Set$$ where the set $\Omega(\Xx)$ of open subsets of $\Xx$ is viewed as a posetal category. The functoriality of the presheaf yields restriction maps $|^U_V\colon \Ff(U)\to \Ff(V)$ for $V\subseteq U$ which compose in the evident way. Note that, in order to lighten the notation, we will write $|_V$ for $|^U_V$ whenever the domain, in this case $\Ff(U)$, can easily be inferred from the context.

For an open cover $\Uu$ of an open subset $W$ of $\Xx$, a family $s_U\in \Ff(U)$ indexed by $U\in\Uu$ is called \emph{compatible} provided $s_U|_{U\cap V}=s_V|_{U\cap V}$ for all $U,V$ in $\Uu$.

A presheaf $F$ on $\Xx$ is called a \emph{sheaf} provided, for any such compatible family $\{s_U\}_{U\in\Uu}$ on any open subset $W$, there is a unique $s\in \Ff(W)$ with $s|_U=s_U$ for all $U$ in $\Uu$.
The notion of sheaf on a topological space only uses the \emph{frame} structure of $\Omega(\Xx)$, i.e. that $\Omega(\Xx)$ has finite meets and arbitrary joins which distribute over finite meets. In particular, it makes sense to define sheaves on any frame.

We may also define a notion of sheaf over a bounded distributive lattices $L$ but in this setting we will only require the gluing condition for \emph{finite} compatible families of sections of $F$. To be precise: a presheaf $\Ff\colon L^\op\to \Set$ is called a sheaf if for any \emph{finite} set $A$ of elements of $L$, and any family $s_a\in \Ff(a)$ indexed by $a\in A$ such that $s_a|_{a\wedge b}=s_b|_{a\wedge b}$ for  $a,b\in S$, there is a unique $s\in \Ff(\bigvee A)$ such that  $s|_a=s_a$  for $a\in A$. We denote by $\Sh(L)$ the category of set-valued sheaves on $L$. It is an example of a \emph{coherent} topos, cf. Mac Lane-Moerdijk \cite{MoerMacL}.

Recall that a (pre)sheaf $\Ff:L^\op\to\Sets$ is said to be \emph{global} provided there is a map from the terminal (pre)sheaf to $\Ff$, cf. Remark~\ref{rmk:globalpresheaves}. When $L$ has a top element, $\Ff$ is global if and only if the set $\Ff(1)$ of global sections is non-empty. In the following proposition we have to restrict to global (pre)sheaves because the semilattice reflection of a distributive band is bounded by definition, cf. Remark~\ref{rmk:top}.

\begin{prp}\label{prp:localsectionband}For a presheaf $\,\Ff$ on a bounded distributive (resp. Boolean) lattice $L$, the category of elements $\el_L(\Ff)$ is a right distributive (resp. Boolean) band over $L$ if and only if $~\Ff$ is a global sheaf. \end{prp}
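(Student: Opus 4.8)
The plan is to translate each clause of Definition~\ref{dfn:distband} for $\el_L(\Ff)$ into a corresponding property of the presheaf $\Ff$, and then to match these against the empty, binary, and general finite instances of the sheaf axiom on $L$. First I would record the band structure supplied by Lemma~\ref{lma:elements} and Remark~\ref{rmk:globalpresheaves}: viewing $L$ as a commutative (hence right regular) band, $\el_L(\Ff)$ carries the right normal band structure $(a,s)(b,t)=(a\wedge b,t|_{a\wedge b})$, and it is automatically normal since $\el_L(\Ff)\to\el_L(\Ff)/\Dd$ is a covering. The key elementary observation is that $(a,s)(b,t)=(a\wedge b,t|_{a\wedge b})$ while $(b,t)(a,s)=(a\wedge b,s|_{a\wedge b})$, so two elements commute precisely when $s|_{a\wedge b}=t|_{a\wedge b}$, that is, when $s$ and $t$ form a compatible family on the binary cover $\{a,b\}$ of $a\vee b$.

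With this dictionary in place I would verify three correspondences. First, by Lemma~\ref{lma:elements} the reflection $\el_L(\Ff)/\Dd$ embeds as the downset $\{a\in L\mid\Ff(a)\neq\emptyset\}$ of $L$; since $L$ is bounded, this downset is all of $L$ exactly when $\Ff(1)\neq\emptyset$, so ``$\el_L(\Ff)$ is over $L$'' is equivalent to $\Ff$ being global. Second, a zero element of $\el_L(\Ff)$ is a bottom of its poset, and because $\Dd$-classes are order-discrete such a bottom exists iff the fibre $\Ff(0)$ is a singleton, which is precisely the sheaf axiom for the empty cover of $0=\bigvee\emptyset$. Third, Condition~\ref{dfn:distband}ii, that commuting pairs have suprema, is to be matched with the gluing axiom for binary covers.

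The crux is this third correspondence, and I expect it to be the main obstacle, since it must be argued in both directions and the location of the supremum has to be controlled. For the ``sheaf $\Rightarrow$ band'' implication, given compatible $s,t$ the unique gluing $u\in\Ff(a\vee b)$ yields an upper bound $(a\vee b,u)$ of the commuting pair; any other upper bound $(c,v)$ has $a\vee b\leq c$, and $v|_{a\vee b}$ again glues $s,t$, so $v|_{a\vee b}=u$ by uniqueness and hence $(a\vee b,u)\leq(c,v)$, making it the supremum. For the converse ``band $\Rightarrow$ sheaf'' implication I would invoke Lemma~\ref{lma:sheafppty}i: as $\el_L(\Ff)$ is now assumed distributive, the reflection $\el_L(\Ff)\to L$ preserves suprema of commuting elements, which forces the supremum of $(a,s),(b,t)$ to lie over $a\vee b$; reading off its restrictions produces the gluing, and uniqueness of suprema gives uniqueness of the gluing. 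This is exactly the step where distributivity is genuinely needed, to prevent the least upper bound from sitting over an element strictly above $a\vee b$.

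Finally I would upgrade binary gluing to arbitrary finite gluing. In the ``band $\Rightarrow$ sheaf'' direction this is immediate from Lemma~\ref{lma:sheafppty}iii: a finite pairwise-compatible family $\{s_i\in\Ff(a_i)\}$ gives finitely many pairwise-commuting elements, whose supremum lies over $\bigvee_i a_i$ and restricts to the $s_i$, yielding the required unique gluing. Conversely a global sheaf certainly satisfies the binary instance needed for Condition~\ref{dfn:distband}ii. Assembling the three correspondences then shows that $\el_L(\Ff)$ satisfies all clauses of Definition~\ref{dfn:distband} (normality being automatic, and Condition~\ref{dfn:distband}iii holding because $\el_L(\Ff)/\Dd\cong L$ is bounded distributive by hypothesis) if and only if $\Ff$ is a global sheaf. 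The Boolean case is identical, the only change being that Condition~\ref{dfn:distband}iii now asks $\el_L(\Ff)/\Dd\cong L$ to be Boolean, which is again exactly the standing hypothesis on $L$.
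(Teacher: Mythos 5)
Your proposal is correct and follows essentially the same route as the paper's proof: the right normal band structure and downset embedding from Lemma~\ref{lma:elements} (via Proposition~\ref{prp:YK}), the dictionary ``commuting $\iff$ compatible'' from Lemma~\ref{lma:commuting}, the matching of binary gluing with suprema of commuting pairs located over the join, and Lemma~\ref{lma:sheafppty} to pass to finite families. You are merely more explicit than the paper on a few points (the zero element versus the empty cover, and both directions of the binary correspondence); note only that your aside that distributivity is ``genuinely needed'' to pin the supremum over $a\vee b$ overstates matters, since restricting any upper bound to $a\vee b$ already forces this, though your use of Lemma~\ref{lma:sheafppty}i there is perfectly valid.
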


\begin{proof}Let $\Ff$ be a presheaf on a bounded distributive lattice $L$. An object $(U,s)$ of $\el_L(\Ff)$ is by definition a local section $s\in \Ff(U)$. By Proposition~\ref{prp:YK}, there is a uniquely defined multiplication $$(U,s)(V,t)=(U\wedge V,t|_{U\wedge V})$$ turning $\el_L(\Ff)$ into a \emph{right normal band} whose semilattice reflection $\el_L(\Ff)/\Dd$ embeds into $L$ as a downset, cf. Lemma~\ref{lma:elements}. Therefore, $\Ff$ is a global presheaf if and only if the quotient $\el_L(\Ff)/\Dd$ contains a top element if and only if it equals $L$.

The partial order on the right normal band $\el_L(\Ff)$ corresponds to restriction of local sections: $(V,t)\leq(U,s)$ just means that $V\leq U$ and $t=s|_{V}$. Therefore, local sections $(U,s),(V,t)$ commute in $\el_L(\Ff)$ if and only if their restrictions to the meet $U\wedge V$ agree, cf. Lemma~\ref{lma:commuting}. The presheaf $\Ff$ satisfies thus the sheaf condition for pairs of elements of $L$ if and only if commuting local sections $(U,s),(V,t)$ have a supremum $(U,s)\vee(V,t)$ in $\el_L(\Ff)$, defined on the join $U\vee V$ by gluing. This last condition amounts by Lemma~\ref{lma:sheafppty} to the sheaf condition for \emph{finite} compatible families of local sections. Therefore, the category of elements satisfies Condition \ref{dfn:distband}ii of a right distributive (resp. Boolean) band if and only if $\Ff$ is a global sheaf.\end{proof}

\begin{cor}\label{cor:sheafification}Let $L$ be a bounded distributive (resp. Boolean) lattice. The category of right distributive (resp. Boolean) bands over $L$ is a reflective subcategory of the category of right normal bands over $L$. The reflection preserves finite meets.\end{cor}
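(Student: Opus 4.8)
The plan is to transport the assertion along the equivalence furnished by the category-of-elements construction and to recognise the reflection as the associated-sheaf functor. By Proposition~\ref{prp:YK}, Lemma~\ref{lma:elements} and Remark~\ref{rmk:globalpresheaves}, the assignment $F\mapsto\el_L(F)$ identifies global presheaves on $L$ with right normal bands over $L$: a presheaf $F$ has $\el_L(F)/\Dd\cong L$ precisely when its support exhausts $L$, i.e. when $F(1)\neq\emptyset$, which (as $L$ has a top element) is exactly globality. By Proposition~\ref{prp:localsectionband} this restricts to an identification of global \emph{sheaves} on $L$ with right \emph{distributive} bands over $L$. First I would verify that under these identifications morphisms over $L$ correspond to natural transformations: a band homomorphism inducing the identity on $L$ acts fibrewise by maps $F(y)\to G(y)$, and the homomorphism condition against the multiplication $(y,s)(y',t)=(y\wedge y',t|_{y\wedge y'})$ is exactly naturality. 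The extra clauses of Definition~\ref{dfn:distband} are then automatic, since a natural transformation commutes with restriction maps and therefore carries the zero section, the global sections and the gluing of a compatible pair (that is, the supremum of two commuting sections) to the analogous data, by the uniqueness part of the sheaf axiom.

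Under this dictionary the inclusion of right distributive bands over $L$ into right normal bands over $L$ becomes the inclusion of global sheaves into global presheaves on $L$. The candidate reflection is the associated-sheaf functor $a$, left adjoint to this inclusion; its existence and left adjointness are standard for the coherent topos $\Sh(L)$. The one point requiring care is that $a$ preserves globality: the terminal presheaf is already a sheaf, so $a$ fixes it, whence a global structure $\star\to F$ is sent to $\star=a\star\to aF$, exhibiting $aF$ as a global sheaf. This establishes reflectivity, with reflection $\el_L(F)\mapsto\el_L(aF)$ and unit the sheafification map.

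It remains to see that the reflection preserves finite meets, which on both sides are the finite products: these are computed pointwise for presheaves, are stable under the sheaf condition, and correspond on the band side to $\el_L(F\times G)$, the empty product being the terminal band $\el_L(\star)\cong L$. The required preservation is then precisely the left-exactness of the associated-sheaf functor of a Grothendieck (here coherent) topos: $a$ preserves all finite limits, in particular the terminal object and binary products, and this persists under restriction to global objects because $a\star=\star$. I would simply invoke this fact. The Boolean case is identical; Booleanness of $L$ only sharpens the lattice reflection and does not affect any of the above.

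The step I expect to be the main obstacle is not the reflectivity, which is a transparent restatement of sheafification, but the bookkeeping around the equivalence: one must check carefully that band homomorphisms over $L$ preserving zero, global elements and suprema of commuting elements are exactly the natural transformations of the associated (pre)sheaves, and that both the sheafification adjunction and its left-exactness descend cleanly to the global subcategories. Once this is in place, preservation of finite meets follows at once from the left-exactness of $a$.
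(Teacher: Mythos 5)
Your proof is correct and follows essentially the same route as the paper's: identify right normal (resp.\ distributive) bands over $L$ with global presheaves (resp.\ sheaves) on $L$ via the category-of-elements equivalence, and recognise the reflection as the associated-sheaf functor, whose left-exactness gives preservation of finite meets. The extra bookkeeping you carry out---fullness of the inclusion under the dictionary, and the observation that $a\star=\star$ ensures sheafification preserves globality---only makes explicit what the paper's one-line appeal to ``sheafification is a finite limit preserving reflection'' leaves implicit.
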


\begin{proof}Right normal bands over $L$ form a category equivalent to global presheaves on $L$, cf. Remark \ref{rmk:globalpresheaves}. By Proposition \ref{prp:localsectionband}, this equivalence restricts to one between right distributive (resp. Boolean) bands over $L$ and global sheaves on $L$. The claim now follows from the fact that sheafification is a finite limit preserving reflection.\end{proof}

\begin{rmk}\label{rmk:reflective}The following proposition generalises the existence of a Boolean envelope for bounded distributive lattices to right distributive bands.

Recall that the \emph{Boolean envelope} $L_B$ of a bounded distributive lattice $L$ comes equipped with a lattice homomorphism $L\to L_B$ enjoying the following universal property: each homomorphism of bounded distributive lattices $L\to B$ whose codomain $B$ is a Boolean lattice factors uniquely through $L\to L_B$. In more abstract terms, the assignment $L\mapsto L_B$ defines a functor from the category of bounded distributive lattices to the category of Boolean lattices which is \emph{left adjoint} to the inclusion functor. Such a left adjoint functor is called a \emph{reflection}, and the category of Boolean lattices is said to be a \emph{reflective} subcategory of the category of bounded distributive lattices. The faithfulness of the reflection functor is equivalent to the property that the homomorphisms $L\to L_B$ are monic for all $L$.\end{rmk}

\begin{prp}\label{prp:Bool1}The category of right Boolean bands is a reflective subcategory of the category of right distributive bands.\end{prp}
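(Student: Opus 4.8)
The plan is to assemble the reflection from two ingredients already at hand: the Boolean envelope of the semilattice reflection (Remark~\ref{rmk:reflective}) and the band--sheaf dictionary of Proposition~\ref{prp:localsectionband}. Throughout I identify a right distributive band with the category of elements $\el_L(\Ff)$ of a global sheaf $\Ff$ on $L=X/\Dd$, and a right Boolean band with $\el_B(\Gg)$ for a global sheaf $\Gg$ on a Boolean lattice $B$. I shall use that a bounded distributive lattice homomorphism $\eta\colon L\to M$ induces a geometric morphism $\Sh(M)\to\Sh(L)$ of coherent toposes (cf.~\cite{MoerMacL}), hence an adjoint pair $\eta^*\dashv\eta_*$ with inverse image $\eta^*\colon\Sh(L)\to\Sh(M)$ and direct image $\eta_*\colon\Sh(M)\to\Sh(L)$. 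Given $X=\el_L(\Ff)$, let $j\colon L\to L_B$ be the Boolean envelope of $L$. Since $j^*$ is left exact it preserves the terminal sheaf and hence carries global sheaves to global sheaves, so $j^*\Ff$ is a global sheaf on the Boolean lattice $L_B$; Proposition~\ref{prp:localsectionband} then shows $R(X):=\el_{L_B}(j^*\Ff)$ is a right Boolean band over $L_B$, with unit $X\to R(X)$ acting as $j$ on $\Dd$-reflections and as the adjunction unit $\Ff\to j_*j^*\Ff$ on local sections.

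The inclusion $J$ of right Boolean bands into right distributive bands is fully faithful by Definition~\ref{dfn:distband}, so it suffices to produce, for each right Boolean band $Y=\el_B(\Gg)$, a natural bijection $\Hom_{\RDistband}(X,JY)\cong\Hom(R(X),Y)$. The key observation is that a morphism of right distributive bands $\el_L(\Ff)\to\el_M(\Gg)$ is the same datum as a pair $(\bar f,\vartheta)$ consisting of a bounded lattice homomorphism $\bar f=f/\Dd\colon L\to M$ (Lemma~\ref{lma:reflection}, where preservation of zero and global elements supplies the bounds) together with a sheaf morphism $\vartheta\colon\Ff\to\bar f_*\Gg$ recording the action on sections, its naturality corresponding to preservation of suprema of commuting elements. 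Taking $M=B$ and using that every lattice homomorphism $L\to B$ factors uniquely through $j$, band maps $X\to JY$ biject with pairs $(h\colon L_B\to B,\ \Ff\to(h\circ j)_*\Gg)=(h,\ \Ff\to j_*h_*\Gg)$; the adjunction $j^*\dashv j_*$ rewrites the second component as $j^*\Ff\to h_*\Gg$, and such a pair is exactly a morphism of right Boolean bands $R(X)=\el_{L_B}(j^*\Ff)\to\el_B(\Gg)=Y$. Verifying that this bijection is implemented by precomposition with the unit then yields the adjunction $R\dashv J$.

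Preservation of globality by $j^*$ and the Boolean band property of $\el_{L_B}(j^*\Ff)$ are immediate from the cited results, so no hard computation hides there. The main obstacle is the morphism dictionary: I must check carefully that every band map splits functorially into a ``horizontal'' lattice part and a ``vertical'' sheaf part, and that factoring the horizontal part through the Boolean envelope is compatible with the sheaf adjunction $j^*\dashv j_*$ naturally in $X$. This bookkeeping, rather than any single estimate, is where the argument needs to be pinned down.
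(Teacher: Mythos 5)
Your argument is correct, and it constructs the same reflection as the paper, but it establishes the universal property by a genuinely different route. The paper never invokes the geometric morphism $j^*\dashv j_*$ at this stage: it composes $S\to S/\Dd\to (S/\Dd)_B$, applies its band-level comprehensive factorisation (Proposition~\ref{prp:factorisation}) to get a connected map $S\to S'$ followed by a covering $S'\to(S/\Dd)_B$, and then sheafifies via Corollary~\ref{cor:sheafification}; the universal property is deduced from Lemma~\ref{lma:reflection}, the universal property of the Boolean envelope, the \emph{orthogonality} of connected maps against coverings, and the sheafification adjunction. Your object $R(X)=\el_{L_B}(j^*\Ff)$ coincides with the paper's $S'_B$: for each $b\in L_B$ the covering part of the comprehensive factorisation has fibre $\pi_0(b\,{\downarrow}\,f)=\colim_{a\in L,\,j(a)\geq b}\Ff(a)=(j_!\Ff)(b)$, so the paper's $S'$ is $\el_{L_B}(j_!\Ff)$ and its sheafification step produces $\el_{L_B}(j^*\Ff)$ --- an identification the paper itself records in the remark following Proposition~\ref{prp:Bool2}, where the dual sheaf of $S'_B$ is recognised as $\phi^*\Ff$. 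The trade-off is this: the paper's proof stays internal to band theory and exercises the factorisation system of Section~\ref{sct:bands}, with orthogonality doing exactly the work that the adjunction $j^*\dashv j_*$ does for you; your proof is conceptually shorter and makes transparent that the reflection is ``inverse image along the Boolean envelope'', but it front-loads the morphism dictionary (band maps over varying lattices $=$ bounded lattice homomorphism plus sheaf map into the direct image). That dictionary is correct --- it is essentially the morphism-level content of Theorem~\ref{thm:main0}, which is proved later but independently of Proposition~\ref{prp:Bool1}, so no circularity arises --- though your gloss on it is slightly off: naturality of $\vartheta$ corresponds to compatibility with restriction (i.e.\ with the band order), and preservation of suprema of commuting elements is then a \emph{consequence} of naturality together with separatedness of $\Gg$ and the fact that $\bar f$ preserves finite joins; this is worth spelling out when you pin the dictionary down.
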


\begin{proof}Let $S$ be a right distributive band with bounded distributive lattice reflection $S\to S/\Dd$. We compose with the Boolean envelope $S/\Dd\to(S/\Dd)_B$ and factor the composite as a connected morphism followed by covering (cf. Proposition~\ref{prp:factorisation})$$\xymatrix{S\ar[r]^{connected}\ar[d]&S'\ar[d]^{covering}\ar[r] &S'_B\ar[d]^{etale\, covering}\\S/\Dd\ar[r]&(S/\Dd)_B\ar@{=}[r]&S'_B/\Dd}$$and finally apply sheafification to the presheaf represented by the middle vertical map in order to get an etale covering on the right by Corollary~\ref{cor:sheafification}, cf. Remark \ref{rmk:etale}.

To check the universal property of the composite reflection $S\to S'_B$, let $S\to T$ be a map of distributive bands whose codomain $T$ is a right Boolean band. By Lemma~\ref{lma:reflection} we get a homomorphism of distributive lattices $S/\Dd\to T/\Dd$ whose codomain is a Boolean lattice. Therefore, thanks to the universal property of the Boolean envelope, we get a unique factorisation $S/\Dd\to(S/\Dd)_B\to T/\Dd$. The latter induces a compatible map of right normal bands $S'\to T$ because of the orthogonality between connected maps and coverings. Since $T\to T/\Dd$ is an etale covering, the given map $S\to T$ factors uniquely through $S'_B$ by Corollary~\ref{cor:sheafification}.\end{proof}

\subsection{Stone duality}\label{subsct:stone}

Marshall Stone \cite{Stone37} has shown that the category of bounded distributive lattices is dually equivalent to the category of spectral spaces.

A \emph{spectral space} is a sober topological space $\Xx$ such that the collection $\KO(\Xx)$ of \emph{compact open} subsets forms a bounded distributive sublattice  of $\Omega(\Xx)$ generating the topology of $\Xx$. Spectral spaces are sometimes called ``Stone spaces'' (see e.g. Balbes-Dwinger \cite{BalbesDwinger} or Nerode \cite{Nerode}), but nowadays the name Stone space is mainly used for Boolean spaces, cf. Stone \cite{Stone36} and Johnstone \cite{Johnstone}. Hochster \cite{Hoch} has shown that spectral spaces are precisely those spaces which occur as prime spectra of commutative rings, and this may serve as justification for our terminology. The appropriate maps between bounded distributive lattices are lattice homomorphisms preserving the bounds, and between spectral spaces they are the continuous maps for which the preimage of a compact open subset is compact open. The latter are called \emph{spectral maps}. For a recent reference on spectral spaces see \cite{DST}.

For a bounded distributive lattice $L$, the dual spectral space $\Spec(L)$ is the set of bounded lattice homomorphisms $L\to\{0,1\}$ equipped with the subspace topology of $\{0,1\}^L$ where $\{0,1\}$ carries the Sierpinski topology.
For a spectral space $\Xx$, the dual lattice is the set $\KO(\Xx)$ of compact open subsets of $\Xx$ viewed as a bounded distributive sublattice of $\Omega(\Xx)$. For more details see \cite{BalbesDwinger, Johnstone}.

Recall that a topological space $\Xx$ is called \emph{sober} if every irreducible closed subset of $\Xx$ is the closure of a unique point. This is weaker than Hausdorff's separation axiom $T_2$ but stronger than Kolmogorov's separation axiom $T_0$. The importance of sobriety comes from the fact that, assuming \emph{Zorn's Lemma}, a topological space $\Xx$ is sober if and only if the map $\Xx\to\Hom_\Frm(\Omega(\Xx),\{0,1\})$ taking a point of $\Xx$ to its characteristic frame homomorphism is a bijection. In particular, assuming Zorn's Lemma, the category of sober spaces and continuous maps is dually equivalent to the category of spatial frames and frame homomorphisms. Combined with the identification of bounded distributive lattices and coherent frames, this yields a possible proof of the aforementioned Stone duality, cf. Johnstone \cite{Johnstone}.

A nice application of Stone duality is a topological perspective on the Boolean envelope $L_B$ of a bounded distributive lattice $L$, cf. Remark \ref{rmk:reflective} and Nerode \cite[Theorem 2.1]{Nerode}. Indeed, by Stone duality, $L$ may be represented as the dual lattice $\KO(\Xx)$ of a spectral space $\Xx$. The Boolean envelope $\KO(\Xx)_B$ may then be identified with the dual lattice $\KO(\Xx^p)$ of a Boolean space $\Xx^p$ canonically associated with the spectral space $\Xx$, namely the one obtained by replacing the \emph{spectral} topology with the finer \emph{patch} topology, cf. Section \ref{subsct:patch} below. The latter is generated by the so-called \emph{constructible} subsets of $\Xx$. In other words, the Boolean envelope $\KO(\Xx)_B=\KO(\Xx^p)$ coincides with the Boolean subalgebra of the powerset $\Pp(\Xx)$ generated by the compact open subsets of $\Xx$.

\subsection{Spectral sheaves}\label{subsct:spectralsheaves}We shall call \emph{spectral sheaf} any sheaf on a spectral space and \emph{map of spectral sheaves} any pair $(f,\phi):(\Xx,\Ff)\to(\Xx',\Ff')$, where $f:\Xx\to\Xx'$ is a spectral map between spectral spaces and $\phi_U:\Ff'(U)\to \Ff(f^{-1}(U))$ is a family of maps indexed by $U\in\Omega(\Xx')$ which is natural with respect to restrictions. Such a family determines, and is determined, either by a map of spectral sheaves $\Ff'\to f_*\Ff$ over $\Xx'$, or by a map of spectral sheaves $f^*\Ff'\to \Ff$ over $\Xx$.
Spectral sheaves and maps of spectral sheaves form a category containing the category of spectral spaces and spectral maps as a full subcategory.

Any sheaf $\Ff$ on a spectral space $\Xx$ can be restricted to the dual lattice $\KO(\Xx)$ consisting of the compact opens of $\Xx$. Since the latter is a basis of the topology, this restriction induces an equivalence of categories between the respective sheaf categories, cf. \cite[Theorem~II.1.3]{MoerMacL}. Since in $\Omega(\Xx)$ every cover has a finite subcover, we get the following proposition which will be central to our further work.

\begin{prp}\label{prp:sheavesDL}Let $\Xx$ be a spectral space and $\,L=\KO(\Xx)$ its dual lattice. Then the category of sheaves on $\Xx$ is equivalent to the category of sheaves on $L$.\end{prp}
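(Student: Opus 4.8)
The plan is to use the standard comparison theorem between sheaves on a space and sheaves on a basis of its topology, applied to the basis $\KO(\Xx)$ of compact open subsets, and then to exploit the compactness built into the definition of a spectral space to reconcile the two a priori different sheaf conditions. The key subtlety is that a ``sheaf on $\Xx$'' requires the gluing condition for \emph{arbitrary} open covers, whereas a ``sheaf on $L=\KO(\Xx)$'' in the sense defined in Section~\ref{subsct:spectralsheaves} requires gluing only for \emph{finite} compatible families indexed by elements of $L$. I must show these two conditions determine equivalent categories.

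First I would invoke the general Comparison Lemma for sheaves on a basis, cited already as \cite[Theorem~II.1.3]{MoerMacL}: since $\KO(\Xx)$ is a basis for the topology of $\Xx$ that is closed under finite intersections, restriction of sheaves along the inclusion $\KO(\Xx)^\op\hookrightarrow\Omega(\Xx)^\op$ induces an equivalence between $\Sh(\Xx)$ and the category of sheaves on the \emph{site} $\KO(\Xx)$, where the covers are the open covers by compact opens. Concretely, the inverse functor extends a sheaf defined on compact opens to all open subsets by taking a limit over the compact opens contained in a given open set. This reduces the problem to comparing the site-theoretic sheaf condition on $\KO(\Xx)$ (gluing over arbitrary compact-open covers) with the lattice-theoretic sheaf condition of Section~\ref{subsct:spectralsheaves} (gluing over \emph{finite} families).

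The heart of the argument, and the step I expect to be the main obstacle, is therefore showing that these two conditions coincide on $L=\KO(\Xx)$. Here is where spectrality is essential: in a spectral space every compact open subset is itself compact, so any cover of a compact open $W\in\KO(\Xx)$ by compact opens admits a finite subcover $U_1,\dots,U_n$ with $W=U_1\vee\cdots\vee U_n$. A compatible family indexed by an arbitrary cover restricts to a compatible family indexed by this finite subcover; conversely a section glued over the finite subcover restricts correctly to every member of the original cover, using that the pairwise intersections $U_i\wedge U_j$ are again compact open. Thus the finite gluing condition already implies the condition for arbitrary covers, and the two notions of sheaf agree. One must also check the uniqueness half of the sheaf axiom transfers, which again follows from covering $W$ by finitely many compact opens. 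This is precisely the content of the phrase ``since in $\Omega(\Xx)$ every cover has a finite subcover'' flagged just before the statement.

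Finally, I would note that this equivalence is functorial and respects morphisms: a map of presheaves on $\Omega(\Xx)$ is determined by its restriction to $\KO(\Xx)$, since $\KO(\Xx)$ generates the topology, so the restriction functor is fully faithful, and the finite-subcover argument shows it is essentially surjective onto the lattice-sheaves. Combining the Comparison Lemma with the finite-subcover reduction yields the desired equivalence $\Sh(\Xx)\simeq\Sh(L)$. The only genuinely nontrivial input beyond routine bookkeeping is the compactness of compact opens in a spectral space, which is exactly what licenses the passage from arbitrary to finite covers.
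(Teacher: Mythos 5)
Your proposal is correct and follows exactly the paper's (very terse) argument: the paper likewise invokes the Comparison Lemma \cite[Theorem~II.1.3]{MoerMacL} for the basis $\KO(\Xx)$ and then notes that covers of compact opens admit finite subcovers, which is precisely your reduction of the site-theoretic gluing condition to the finite one. You have merely spelled out the details that the paper compresses into the sentence preceding the proposition.
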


The following is our first main result. It extends classical Stone duality \cite{Stone37}.

\begin{thm}\label{thm:main0}The category of spectral sheaves with global sections is dually equivalent to the category of right distributive bands.\end{thm}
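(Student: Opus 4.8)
The plan is to assemble the equivalence from the pieces already established rather than construct it from scratch. By Stone duality (Section~\ref{subsct:stone}), the category of spectral spaces is dually equivalent to the category of bounded distributive lattices via $\Xx\mapsto\KO(\Xx)$ and $L\mapsto\Spec(L)$. So I would first reduce the problem from spectral spaces to their dual lattices: a spectral sheaf is a sheaf on a spectral space $\Xx$, and by Proposition~\ref{prp:sheavesDL} this is equivalent to a sheaf on the dual lattice $L=\KO(\Xx)$. Under this reduction, the category of spectral sheaves with global sections becomes (equivalent to) the category whose objects are pairs $(L,\Ff)$ consisting of a bounded distributive lattice $L$ together with a global sheaf $\Ff$ on $L$, with morphisms given by the sheaf maps described in Section~\ref{subsct:spectralsheaves} but read contravariantly through Stone duality on the base.

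The heart of the argument is then Proposition~\ref{prp:localsectionband}, which tells us that for a fixed bounded distributive lattice $L$, a presheaf $\Ff$ on $L$ is a global sheaf if and only if its category of elements $\el_L(\Ff)$ is a right distributive band over $L$. I would promote this object-level correspondence to a functor: sending a pair $(L,\Ff)$ to the right distributive band $\el_L(\Ff)$, and conversely sending a right distributive band $X$ to the pair $(X/\Dd,F_X)$, where $X/\Dd$ is its bounded distributive lattice reflection (Lemma~\ref{lma:sheafppty}i together with Proposition~\ref{prp:basic}ii) and $F_X$ is the representing sheaf of the covering $X\to X/\Dd$ guaranteed by Proposition~\ref{prp:YK} and described elementwise in Remark~\ref{rmk:globalpresheaves} by $F_X(a)=\{x\in X\mid[x]_\Dd=a\}$. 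These two assignments are mutually inverse on objects essentially by Remark~\ref{rmk:globalpresheaves}, which gives the isomorphism $X\cong\el_{X/\Dd}(F_X)$ of right normal bands, and the distributivity data (zero, global elements, suprema of commuting elements) matches the sheaf data under Proposition~\ref{prp:localsectionband}.

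The main obstacle, and the step deserving the most care, is the \emph{morphism} correspondence, because this is where the duality must be shown to be full and faithful. A map of spectral sheaves $(f,\phi):(\Xx,\Ff)\to(\Xx',\Ff')$ yields, on dual lattices, a lattice homomorphism $h=\KO(f):\KO(\Xx')\to\KO(\Xx)$ together with a natural family $\phi_U:\Ff'(U)\to\Ff(h(U))$ compatible with restriction. I must check that such data correspond bijectively to morphisms of right distributive bands $\el_L(\Ff)\to\el_{L'}(\Ff')$ in the sense of Definition~\ref{dfn:distband}, i.e. band homomorphisms preserving zero element, global elements, and suprema of commuting elements. The key point is that a morphism of right distributive bands induces on semilattice reflections a morphism of bounded distributive lattices (Lemma~\ref{lma:reflection}), which under Stone duality supplies the spectral map $f$; the covering property of $X\to X/\Dd$ then forces the band map to be determined fibrewise by the section-transport maps $\phi_U$, and conversely any compatible family of section maps assembles into a band homomorphism by the explicit multiplication formula $(a,s)(b,t)=(a\wedge b,t\rceil^b_{a\wedge b})$ of Remark~\ref{rmk:globalpresheaves}.

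Finally I would verify that the equivalence is \emph{dual}, i.e. contravariant: the direction reversal enters precisely through Stone duality on the base lattices (the functor $L\mapsto\Spec(L)$ reverses arrows), while the sheaf component $\Ff'\to f_*\Ff$ runs in the ``same'' direction as the underlying spectral map. Tracking both components shows that the composite assignment $(\Xx,\Ff)\mapsto\el_{\KO(\Xx)}(\Ff)$ reverses the direction of morphisms, giving the asserted \emph{dual} equivalence. I expect the routine naturality and functoriality checks (that the two assignments respect composition and identities, and that the unit and counit are natural isomorphisms) to follow formally once the object and morphism bijections are in place, so the genuine work is concentrated in the fullness-and-faithfulness verification of the preceding paragraph.
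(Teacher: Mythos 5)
Your proposal is correct and follows essentially the same route as the paper's proof: both establish the object correspondence via Propositions~\ref{prp:sheavesDL}, \ref{prp:localsectionband} and the covering/elements correspondence of Proposition~\ref{prp:YK} and Remark~\ref{rmk:globalpresheaves}, and both handle morphisms by combining Lemma~\ref{lma:reflection} with Stone duality in one direction and the uniqueness of the band multiplication (covering property) in the other. The only cosmetic difference is your explicit intermediate category of pairs $(L,\Ff)$, which is implicit in the paper's argument.
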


\begin{proof}
Proposition~\ref{prp:YK} yields a correspondence between right normal bands $X$ and global presheaves on meet-semilattices. By Proposition~\ref{prp:localsectionband} this correspondence restricts to one between right distributive bands $X$ and global sheaves over bounded distributive lattices $X/\Dd$. By Proposition~\ref{prp:sheavesDL} sheaves on $L=X/\Dd$ correspond to sheaves on $\Xx=\Spec(X/\Dd)$, i.e. to spectral sheaves. This establishes the asserted equivalence on the level of objects.

Every map of right distributive bands $\phi:X\to X'$ induces by Lemma~\ref{lma:reflection} a commutative diagram$$\xymatrix{X\ar[r]^\phi\ar[d]&X'\ar[d]\\X/\Dd\ar[r]_{\phi/\Dd}&X'/\Dd}$$in which the lower horizontal map is a morphism of bounded distributive lattices. Stone duality yields then a spectral map $f:\Spec(X'/\Dd)\to\Spec(X/\Dd)$. Note that we have identifications $\KO(\Spec(X'/\Dd))=X'/\Dd$ and $\KO(\Spec(X/\Dd))=X/\Dd$ under which $f^{-1}$ identifies with $\phi/\Dd$. Consequently, the upper horizontal map $\phi:X\to X'$, once identified with $\el_{X/\Dd}(\Ff)\to\el_{X'/\Dd}(\Ff')$, yields maps of local sections $\rho_U:\Ff(U)\to \Ff'(f^{-1}(U))$ which are natural with respect to restrictions. We have thus constructed a contravariant functor from the category of right distributive bands to the category of global spectral sheaves.

Every map $(f,\rho):(\Xx',\Ff')\to(\Xx,\Ff)$ of global spectral sheaves induces by Stone duality a map of bounded distributive lattices $\KO(\Xx)\to\KO(\Xx')$ as well as distributive bands $\el_{\KO(\Xx)}(\Ff)$ and $\el_{\KO(\Xx')}(\Ff')$. The natural transformation $\rho_U:\Ff(U)\to \Ff'(f^{-1}(U))$ restricts to a map of posets $\el_{\KO(\Xx)}(\Ff)\to\el_{\KO(\Xx')}(\Ff')$ compatible with the lattice reflections. The uniqueness of the band multiplication (cf. Proposition~\ref{prp:YK}) implies that this map of posets is a map of right normal bands. The compatibility with the lattice reflections implies moreover that zero and global elements are preserved, as well as suprema of commuting elements, i.e. we have constructed a contravariant functor from global spectral sheaves to right distributive bands. The two functors are mutually quasi-inverse.\end{proof}

\begin{rmk}The duality of Theorem~\ref{thm:main0} comprises in particular an equivalence between the category of global sheaves on a fixed spectral space $\Xx$ and the category of right distributive bands over the dual lattice $\KO(\Xx)$. The contravariant aspect of Theorem~\ref{thm:main0} concerns base change and reduces here to classical Stone duality.

In the special case where $\Xx$ is a Boolean space, i.e. the dual lattice $\KO(\Xx)$ is a Boolean algebra, other representations of global sheaves exist. Our choice to take the right Boolean band of local sections of $\Ff$ is shared by Kudryavtseva-Lawson \cite{KL}. For related representations see Bergman \cite{Berg} and Burris-Werner \cite{BW}.
\end{rmk}
In Section \ref{sct:patch} we will characterise in sheaf-theoretical terms those global sheaves on spectral spaces whose band of local sections extends to a right distributive skew lattice. The following proposition is a tiny step in this direction.

\begin{prp}\label{prp:skewsheaf}Let $\Xx$ be the spectral space and $\Ff:\KO(\Xx)^\op\to\Set$ be a sheaf whose band of local sections extends to a right distributive skew lattice $(S_\Ff,\cm,\cj)$.

\begin{itemize}\item[(i)]For compact open subsets $U,V,W\in\KO(\Xx)$ such that $U\supseteq V$, the square
$$\xymatrix{\Ff(V)\times \Ff(U)\ar[r]^\cj\ar[d]_{|^V_{V\cap W}\times|^U_{U\cap W}}&\Ff(U)\ar[d]^{|^U_{U\cap W}}\\\Ff(V\cap W)\times \Ff(U\cap W)\ar[r]_>>>>>\cj& \Ff(U\cap W)}$$is commutative.
\item[(ii)]The sheaf $\Ff$ is quasi-flasque (cf. Kempf \cite{Kempf}), i.e. for all compact open subsets $V\subseteq U$, the restriction map $|^U_V:\Ff(U)\to \Ff(V)$ is surjective.
    \item[(iii)]For all compact open subsets $V\subseteq U$, the restriction map $|^U_V:\Ff(U)\to \Ff(V)$ may be identified with a projection map $\Ff(V)\times\Ff_{U\backslash V}(U)\to\Ff(V)$ whose fibre is definable by the following coequaliser diagram$$\xymatrix{\Ff(V)\times\Ff(U)\ar@<-.5ex>[r]_>>>>{\cj} \ar@<.5ex>[r]^>>>>{pr_{2}} & \Ff(U)\ar[r]&\Ff_{U\backslash V}(U)}$$ in the category of sets.\end{itemize}\end{prp}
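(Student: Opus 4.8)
The plan is to exploit the structural fact, established in Proposition~\ref{prp:Boolean} and its distributive analogue, that the join $\cj$ on a right distributive skew lattice is forced by the underlying $\cm$-band together with the relative complements available over principal downsets of the lattice reflection $S_\Ff/\Dd\cong\KO(\Xx)$. Throughout I identify $S_\Ff$ with $\el_{\KO(\Xx)}(\Ff)$ via Proposition~\ref{prp:localsectionband}, so that an element is a pair $(U,s)$ with $s\in\Ff(U)$, the meet is $(U,s)\cm(V,t)=(U\cap V,\,s|_{U\cap V})$, and $(V,t)\leq(U,s)$ means $V\subseteq U$ and $t=s|_V$.

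First I would prove (i). The key point is that for a right-handed skew lattice the join $(V,t)\cj(U,s)$ lives over the semilattice join $U\cup V=U$ (since $V\subseteq U$), hence is again a section over $U$; concretely it equals $s\vee(t\backslash s|_V\cdots)$ by the construction in Proposition~\ref{prp:Boolean}, and the point to extract is simply that $\cj$ is a well-defined binary operation $\Ff(V)\times\Ff(U)\to\Ff(U)$ whenever $V\subseteq U$. Commutativity of the square then reduces to the claim that $\cj$ commutes with restriction, i.e. $\bigl((V,t)\cj(U,s)\bigr)|_W=(V,t)|_{W}\cj(U,s)|_{W}$ for $W=U\cap W$ (and $V\cap W$ on the left factor). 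I would verify this by right normality and Lemma~\ref{lma:sheafppty}(ii): restriction is multiplication by the section $\one_W$ over $W$, which distributes over $\cj$ because $\cj$ is built from $\cm$ and suprema of commuting elements, and both of these commute with restriction---meets tautologically, suprema by Lemma~\ref{lma:sheafppty}(ii). I expect this to be the main obstacle, since it requires tracking the interaction of $\cj$ with restriction rather than treating $\cj$ as a black box; the cleanest route is to observe that $|^U_{U\cap W}$ is the $\cm$-band homomorphism $p_{\one_{U\cap W}}$, which by distributivity (Corollary~\ref{cor:forgetful} plus Proposition~\ref{prp:distributive}) is also a $\cj$-homomorphism on the relevant downset.

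For (ii), quasi-flasqueness, I would argue that given $V\subseteq U$ and $t\in\Ff(V)$, the element $(U,s)\cj(V,t)$ for any chosen global-over-$U$ section $s$ (which exists since $\Ff$ is global) restricts onto $t$. More directly: the absorption laws give $(V,t)=(V,t)\cm\bigl((V,t)\cj(U,s)\bigr)$, exhibiting $(V,t)$ as the $\cm$-image, i.e. the restriction to $V$, of the section $(V,t)\cj(U,s)\in\Ff(U)$. Thus every $t\in\Ff(V)$ is hit by $|^U_V$, proving surjectivity.

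Finally (iii) packages (i) and (ii). Surjectivity of $|^U_V$ and the existence of relative complements $y\backslash x$ (Proposition~\ref{prp:Boolean}, transported to the distributive setting via the downset ${\downarrow}U$, which is Boolean-complemented in the needed sense because $V\subseteq U$ admits a relative complement $U\backslash V$ in the distributive lattice $\KO(\Xx)$) let me split $\Ff(U)\cong\Ff(V)\times\Ff_{U\backslash V}(U)$, with the second factor the sections ``supported off $V$''. I would define $\Ff_{U\backslash V}(U)$ as the stated coequaliser: the two maps $\cj$ and $pr_2$ from $\Ff(V)\times\Ff(U)$ identify a section with its $\cj$-modification by an arbitrary piece over $V$, so the coequaliser quotients out exactly the part of a section detectable over $V$. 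The identification of $|^U_V$ with the resulting projection then follows because the fibre of the restriction over a fixed $t\in\Ff(V)$ is precisely a coset under this relation, and the coequaliser realizes that fibre as $\Ff_{U\backslash V}(U)$. The only subtlety is checking that the coequaliser is computed in $\Set$ as claimed, which is immediate since the relation generated by $\{(\xi,\,(V,t)\cj\xi)\mid t\in\Ff(V)\}$ is already an equivalence relation by idempotency and associativity of $\cj$ together with part (i).
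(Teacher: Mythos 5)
Your overall framing rests on a false premise, and it matters. You claim that $\cj$ is ``forced by the underlying $\cm$-band together with the relative complements available over principal downsets'', and in part (iii) that ``$V\subseteq U$ admits a relative complement $U\backslash V$ in the distributive lattice $\KO(\Xx)$''. Neither is true in the distributive setting. Relative complements exist in $\KO(\Xx)$ only when $\KO(\Xx)$ is Boolean: already for the Sierpinski space, whose dual lattice is the three-element chain, there is no compact open $U\backslash V$ at all --- that element is merely constructible, which is the whole reason the paper later passes to the patch topology. Correspondingly, the paper states explicitly (just after Proposition~\ref{prp:Boolean}) that Proposition~\ref{prp:Boolean} does \emph{not} extend to right distributive skew lattices; there is no ``distributive analogue'', and $\cj$ is genuinely extra structure, not determined by the band $(S_\Ff,\cm)$. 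So every step of your argument that decomposes $\cj$ as ``$\cm$ plus suprema of commuting elements plus relative complements'' is unavailable --- this invalidates your first argument for (i) (the one invoking Lemma~\ref{lma:sheafppty}(ii)) and your splitting argument for (iii). Note also that $\Ff_{U\backslash V}(U)$ is purely formal notation: it is \emph{defined} by the coequaliser, not as a value of $\Ff$ at a lattice element.

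Fortunately, much of what you actually verify survives once this scaffolding is discarded. For (i), your ``cleanest route'' --- restriction is left multiplication in the band, and by Proposition~\ref{prp:distributive} left multiplication is a $\cj$-homomorphism, which is precisely the identity $z\cm(y\cj x)=(z\cm y)\cj(z\cm x)$ --- is exactly the paper's proof. For (ii), your absorption argument $(V,t)\cm\bigl((V,t)\cj(U,s)\bigr)=(V,t)$ is correct and is essentially the paper's. For (iii) your route genuinely differs from the paper's: the paper compares a reflexive coequaliser, preserved by $\Ff(V)\times(-)$, with a split coequaliser, whereas you argue that the relation $\{(x,\,t\cj x)\mid t\in\Ff(V)\}$ is already an equivalence relation --- which is correct: reflexivity and symmetry follow from absorption together with $a\cj b=a$ for $\Dd$-equivalent elements of the left regular band $(S_\Ff,\cj)$, and transitivity from associativity --- and then identify fibres of $|^U_V$ with equivalence classes. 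But that identification is the actual content of (iii), and you only assert it. It does hold: writing $q:\Ff(U)\to\Ff_{U\backslash V}(U)$ for the coequaliser map, injectivity of $(|^U_V,q)$ follows since $x'=t\cj x$ and $x'|_V=x|_V$ force, by (i) with $W=V$, $t=(t\cj x)|_V=x'|_V=x|_V$ and hence $x'=(x|_V)\cj x=x$ by absorption; surjectivity follows since $(t\cj x)|_V=t$ and $q(t\cj x)=q(x)$. With that verification written out, your (iii) becomes a valid, more elementary alternative to the paper's coequaliser comparison.
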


\begin{proof}(i) Since the binary join of $S_\Ff$ is preserved by $S_\Ff\to S_\Ff/\Dd$, the horizontal maps of the square are well-defined. Now take any $(x,y,z)\in \Ff(U)\times \Ff(V)\times \Ff(W)$. By Remark \ref{rmk:globalpresheaves}, commutativity of the square amounts to the identity $z\cm(y\cj x)=(z\cm y)\cj(z\cm x)$ in $S_\Ff$ which holds by Proposition~\ref{prp:distributive}.

(ii) Taking $V=W$ and $y=z$ in (i) shows that $(y\cj x)|^U_V=y$.

(iii) For $(y,x)\in\Ff(V)\times\Ff(U)$ the absorption law $(x\cm y)\cj y=y$ implies that the parallel pair above has a common section $(|^U_V,id_{\Ff(U)}):\Ff(U)\to\Ff(V)\times\Ff(U)$, i.e. the coequaliser is \emph{reflexive}. Reflexive coequalisers in the category of sets are preserved under Cartesian product. Applying $\Ff(V)\times(-)$ we thus get$$\xymatrix{\Ff(V)\times\Ff(V)\times\Ff(U)\ar@<-.5ex>[r]_>>>>{id_{\Ff(V)}\times\cj} \ar@<.5ex>[r]^>>>>{pr_{13}} & \Ff(V)\times\Ff(U)\ar[r]&\Ff(V)\times\Ff_{U\backslash V}(U)}$$ a coequaliser diagram in sets. On the other hand, we also have a \emph{split} coequaliser$$\xymatrix{\Ff(V)\times\Ff(V)\times\Ff(U)\ar@<-.5ex>[r]_>>>>{id_{\Ff(V)}\times\cj} \ar@<.5ex>[r]^>>>>{pr_{13}} & \Ff(V)\times\Ff(U)\ar@<-.5ex>[r]^>>>>{\cj}\ar@/^2pc/[l]^{(\Delta_{\Ff(V)},id_{\Ff(U)})}&\Ff(U)\ar@/^2pc/[l]^{(|^U_V,id_{\Ff(U)})}}$$because for $(y_1,y_2,x)\in\Ff(V)\times\Ff(V)\times\Ff(U)$ we get $y_1\cj(y_2\cj x)=(y_1\cj y_2)\cj x=y_1\cj x$ so that $\cj\circ pr_{13}=\cj\circ (id_{\Ff(V)}\times\cj)$. Moreover, starting in the middle with $(y,x)\in\Ff(V)\times\Ff(U)$ the right hand idempotent gives $(y\cm(y\cj x),y\cj x)=(y,y\cj x)$ and the left hand idempotent gives the same result. Therefore, the two coequalisers are isomorphic showing that $\Ff(U)\cong\Ff(V)\times\Ff_{U\backslash V}(U)$ over $\Ff(V)$.\end{proof}

\section{The patch monad}\label{sct:patch}

This section contains the two main results of this article (Theorems \ref{thm:main1} and \ref{thm:main2}). These relate the category of algebras over the patch monad to two seemingly quite different categories, namely distributive skew lattices on one side, and saturated Priestley sheaves on the other. At the heart of these categorical equivalences is a \emph{monadicity theorem} for which we present two proofs.

\subsection{The patch topology}\label{subsct:patch}\vspace*{1ex}
Each spectral space $\Xx$ defines a Boolean space $\Xx^p$ with the same underlying set but a finer topology, the so-called \emph{patch topology}, cf. Hochster \cite{Hoch}. Under Stone duality, the patch refinement $\Xx^p\to \Xx$ corresponds to the Boolean envelope $\KO(\Xx)\hookrightarrow\KO(\Xx)_B$ of the dual lattice, cf. Section~\ref{subsct:stone}.

The existence of a patch topology relies first and foremost on the existence of a \emph{dual} topology on spectral spaces (reversing the specialisation order) since the patch topology coincides with the join of the spectral topology and its dual. This kind of order-reversing duality was introduced by de Groot \cite{deGroot67} and studied in more detail by Hochster \cite{Hoch} and in a more general setting by Lawson \cite{Lawson91}, see Kopperman \cite{Kopperman95} for a historical overview. In the algebraic geometer's tradition (cf. Grothendieck-Dieudonn\'e \cite{Groth}), the patch topology is called the \emph{constructible} topology.

Boolean spaces are \emph{zero-dimensional} compact Hausdorff spaces. They are often called ``Stone spaces'' or ``profinite sets'', cf. Johnstone \cite{Johnstone}. For a spectral space $\Xx$ the patch topology is most easily defined by taking as subbasis the compact open subsets of $\Xx$ together with their complements. Recall that a \emph{constructible} subset of a spectral space $\Xx$ is any element of the Boolean subalgebra generated by the compact open subsets of $\Xx$. The constructible subsets of a spectral space thus form a \emph{basis} for the patch topology. The constructible subsets of $\Xx$ are precisely the \emph{clopen subsets} of $\Xx^p$, i.e. those subsets which are simultaneously open and closed in $\Xx^p$. The \emph{spectral subspaces} of $\Xx$ are precisely the \emph{closed subsets} of $\Xx^p$, cf. Hochster \cite{Hoch}. This means that the constructible subsets of $\Xx$ can also be characterised as those subsets $Z$ for which $Z$ and $\Xx-Z$ are both spectral subspaces of $\Xx$.

Any spectral space $\Xx$ defines a \emph{specialisation order} on its points, namely $x\leq y$ if and only if $x\in\overline{\{y\}}$. The Boolean space $\Xx^p$ equipped with the specialisation order of $\Xx$ is known as the \emph{Priestley space} associated with $\Xx$. This ordered space satisfies Priestley's separation axiom of \emph{total order disconnectedness} \cite{P}: whenever $x\not\leq y$ there exists a clopen upset of $\Xx^p$ containing $x$ but not $y$.
When we use the notation $\Xx^p$ we always mean the Priestley space associated with $\Xx$, i.e. \emph{the specialisation order of $\Xx$ is supposed to be recorded}.
The topology of the spectral space $\Xx$ can be recovered from the associated Priestley space $\Xx^p$ as the one admitting the \emph{clopen upsets} as basis and the \emph{open upsets} as opens. This actually defines an \emph{isomorphism} between the category of spectral spaces and spectral maps and the category of Priestley spaces and order-preserving continuous maps, cf. \cite{P}.

\subsection{Distributive skew lattices as algebras over the patch monad}
\
\vspace*{1ex}

\noindent
Let $\Xx$ be a spectral space. Consider the continuous map
\[
\phi:\Xx^p\to\Xx,
\]
given by the identity on the underlying set. This map induces an adjunction between sheaf categories
\[
\phi^*:\Sh(\Xx)\leftrightarrows\Sh(\Xx^p):\phi_*,
\]
where the inverse image functor $\phi^*$ and direct image functor $\phi_*$ are defined as follows. The direct image functor is given by restricting a sheaf $\Gg$ on $\Xx^p$ to a sheaf on the opens of the spectral space $\Xx$, which are open upsets of $\Xx^p$. On the other hand, the inverse image functor $\phi^*$ takes a sheaf $\Ff$ on $\Xx$ to the sheafification of the presheaf, given on clopens by $V\mapsto \colim_{U\supseteq V}\Ff(U)$ on $\Xx^p$. The colimit ranges over all compact open subsets $U$ of $\Xx$ containing the clopen subset $V$ of $\Xx^p$.

The composite functor
\[
\PM=\phi_*\phi^*:\Sh(\Xx)\rightarrow\Sh(\Xx).
\]
is then a \emph{monad} $(\PM,\eta,\mu)$ which we call the \emph{patch monad}. This monad is reminiscent of Godement's ``flabbyfication'' monad associated with the discretisation $\Xx^{disc}\to\Xx$ of a topological space, cf. Proposition~\ref{prp:skewsheaf}. In the following statement, global $T$-algebra means a global sheaf with $T$-algebra structure, and skew lattice over a given lattice $L$ means a skew lattice whose lattice reflection is $L$.

\begin{thm}\label{thm:main1}For any spectral space $\Xx$, the category of global $\PM$-algebras is equivalent to the category of right distributive skew lattices over the dual lattice $\KO(\Xx)$.\end{thm}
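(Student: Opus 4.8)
The plan is to build on Theorem~\ref{thm:main0}, which already identifies global sheaves on $\Xx$ with right distributive bands over $L=\KO(\Xx)$ via $\Ff\mapsto S_\Ff=\el_L(\Ff)$, and to show that a $\PM$-algebra structure on such a global sheaf is precisely the datum of a join operation $\cj$ extending the band $(S_\Ff,\cm)$ to a right distributive skew lattice. By Corollary~\ref{cor:forgetful} the underlying $\cm$-band of a right distributive skew lattice is a right distributive band, so one direction of the dictionary is automatic; all the content lies in matching the \emph{extra} structure carried by each side. Since the lattice reflection of a right distributive band is bounded, the restriction to \emph{global} sheaves (equivalently, sheaves possessing global elements) is exactly what makes both sides live over the same bounded lattice $L$.

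First I would compute the patch monad explicitly on sections over compact opens. For $U\in L$ we have $(\PM\Ff)(U)=(\phi^*\Ff)(U)$, since $\phi$ is the identity on points and $U$ is an open upset of $\Xx^p$. Now $U$ is constructible, hence \emph{clopen} in the Boolean space $\Xx^p$, so I would describe $(\phi^*\Ff)(U)$ as the set of \emph{finite patch-gluings}: by compactness of $\Xx^p$ and the structure of sheaves on Boolean spaces, every such section is represented by a finite constructible partition $U=C_1\sqcup\cdots\sqcup C_n$ together with a section (germ) of $\Ff$ on each $C_i$, where the $C_i$ are Boolean combinations $V\setminus W$ of compact opens. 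The unit $\eta_\Ff$ sends an honest section $s\in\Ff(U)$ to the trivial one-block gluing.

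Given this description, I would read off the join from a $\PM$-algebra structure $\alpha:\PM\Ff\to\Ff$ by setting, for $s\in\Ff(U)$ and $t\in\Ff(V)$, the element $s\cj t\in\Ff(U\vee V)$ to be $\alpha$ applied to the patch-gluing that equals $s$ on $U$ and $t$ on $(U\vee V)\setminus U$. Proposition~\ref{prp:skewsheaf} controls the behaviour of such gluings and shows that the resulting sheaf is quasi-flasque with the product decomposition $\Ff(U)\cong\Ff(V)\times\Ff_{U\backslash V}(U)$ along a constructible partition. The unit axiom $\alpha\eta=\id$ then yields the absorption laws, while the associativity axiom $\alpha\mu=\alpha\,\PM\alpha$ yields associativity of $\cj$ and its compatibility with $\cm$; normality and symmetry follow from the band structure together with the sheaf (gluing) condition. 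Distributivity of the lattice reflection is already part of the notion of right distributive band, so I would finally invoke the equational characterisation of Proposition~\ref{prp:distributive} to certify that $(S_\Ff,\cm,\cj)$ is a genuine right distributive skew lattice over $L$.

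Conversely, starting from a right distributive skew lattice I would construct $\alpha$ by realising every finite patch-gluing as an iterated join, using Proposition~\ref{prp:skewsheaf}(iii) to see that the sheaf already satisfies the patch-gluing condition along constructible partitions. The main obstacle, and the step demanding the most care, is precisely the \textbf{well-definedness} of this realisation: different constructible partitions, and different orders of joining, of the same patch-section must produce the same value in $\Ff(U)$, and it is exactly here that \emph{distributivity}, \emph{symmetry} and \emph{normality} of the skew lattice are used essentially, via Proposition~\ref{prp:distributive} and the sheaf condition. Once $\alpha$ is shown to be well-defined the monad axioms reduce to the skew-lattice identities already in hand. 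To conclude, I would check that a morphism of $\PM$-algebras is a sheaf morphism commuting with the structure maps, which under this dictionary is exactly a band morphism preserving global elements and suprema of commuting elements, i.e.\ a morphism of right distributive skew lattices; this promotes the object-level bijection to the asserted equivalence of categories.
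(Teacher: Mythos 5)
Your overall strategy coincides with the paper's: describe $\PM\Ff$ on compact opens via finite formal patches over constructible partitions, read off the join $(s,U)\cj(t,V)$ by applying the algebra map to the two-block patch ($s$ on $U$, $t$ on $V\setminus U$), and conversely define the action map by iterated joins. Your forward direction (from $\PM$-algebras to skew lattices) matches Proposition~\ref{prp:Tskew} in outline and is sound.

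The gap is in the converse direction, and it is exactly the step you yourself flag as demanding the most care: you assert, but do not prove, that realising a patch section as an iterated join is well defined, and your formulation of what must be proved --- that ``different constructible partitions, and different orders of joining, of the same patch-section must produce the same value'' --- is not only unproven but false for naive iterated joins. Since $\cj$ is a skew join, later factors contribute only away from the domains of earlier ones: if $s\in\Ff(U)$ and $t\in\Ff(\Xx)$ is a global section, the patch agreeing with $s$ on $U$ and with $t$ on $\Xx\setminus U$ is realised by $s\cj t$, whereas $t\cj s=t$ (by absorption $t\cm(t\cj s)=t$, and $t\cm(t\cj s)$ is the restriction of $t\cj s$ to its full domain), which differs from $s\cj t$ unless $t|_U=s$. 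So the blocks of a constructible partition cannot be joined in an arbitrary order; and since the blocks are in general not compact open, one cannot repair this by restricting each section to its own block. The missing idea, which is the technical heart of the paper's proof, is the chain-partition machinery: every constructible partition refines to one induced by an ascending chain of compact opens (Lemma~\ref{lma:diffchainfin}), such partitions carry a canonical linear order, representatives can be chosen \emph{admissibly} in the sense that $U'_1\cup\cdots\cup U'_i=U_1\sqcup\cdots\sqcup U_i$ for all $i$, and Lemma~\ref{lma:Talg}, via common chain refinements (Lemma~\ref{lma:chainrefine}), shows that the resulting join is independent of all choices. Without this (or an equivalent device) your map $\alpha$ is not even defined, so the reduction of the monad axioms to skew-lattice identities cannot get started; it is not a routine verification that symmetry, normality and distributivity ``handle'', but the place where the ordering problem must be solved explicitly.
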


The proof of this theorem will occupy half of this section. We begin by explicitly constructing the patch monad $\PM$, before describing the two mutually quasi-inverse functors relating $\PM$-algebras and right distributive skew lattices. In view of Theorem~\ref{thm:main0}, spectral sheaves are equivalent to distributive bands, and by Corollary~\ref{cor:forgetful}, each distributive skew lattice has an underlying distributive band. Theorem~\ref{thm:main1} may be read as follows: ``skew structures'' on a right distributive band are in one-to-one correspondence with $\PM$-algebra structures on the dual spectral sheaf.

\subsection{Explicit construction of the patch monad}\label{sct:patchmonad}

It will be useful to describe the inverse image functor on the level of local sections. As stated above, $\phi^*\Ff$ is obtained as sheafification of the presheaf
\begin{align*}
\phi^\dagger \Ff\colon  \KO(\Xx^p)^\op\  \to \ &\Set\\
                            V    \     \mapsto \  &\left(\coprod_{U \in\KO(\Xx),U\supseteq V}\Ff(U)\right)/\sim_V
\end{align*}
where $\coprod$ denotes the disjoint union and the equivalence relation $\sim_V$ is given by
\[
(s,U)\sim_V(s',U')\iff \exists U''\in \KO(\Xx)\text{ with }V\subseteq U''\subseteq U\cap U'\text{ and } s|^U_{U''}= s'|^{U'}_{U''}.
\]
To see that $\phi^\dagger \Ff$ is in general only a presheaf, notice that $(\phi^\dagger \Ff)(V)$ can be identified with $\Ff({\uparrow} V)$ where ${\uparrow} V$ denotes the upset generated by $V$, which can be shown to be compact and equal to the intersection of all compact open subsets containing $V$.

Even for disjoint clopen subsets $V_1$ and $V_2$, the upsets ${\uparrow}V_1$ and ${\uparrow}V_2$ may not be disjoint. Thus, if we take arbitrary local sections $s_i\in \Ff({\uparrow}V_i)$, they may not be compatible and thus not have a common extension to  ${\uparrow}V_1\cup{\uparrow}V_2={\uparrow}(V_1\cup V_2)$. However, since  $V_1$ and $V_2$ are disjoint, as elements of the sheafification of $\phi^\dagger \Ff(V_i)$, they should have a common extension to $V_1\cup V_2$. Notice that $\phi^\dagger \Ff$ is however a \emph{separated} presheaf, since, whenever $s_i\in \Ff({\uparrow}V_i)$ have a common extension to ${\uparrow}(V_1\cup V_2)$, this extension is unique because $\Ff$ is a sheaf. Recall that the sheafification of a separated presheaf is a one-step construction while the sheafification of a general presheaf first constructs a separated presheaf before constructing a sheaf, cf. \cite{MoerMacL}.

The sheafification of $\phi^\dagger \Ff$ is the closure of $\phi^\dagger \Ff$ under \emph{formal finite disjoint patch} modulo the appropriate notion of refinement. Since we can refine any finite clopen cover to a finite clopen \emph{partition}, it suffices to consider clopen partitions, and for these the gluing condition is vacuous, thus simplifying the description.

Given $U\in\KO(\Xx)$, $s\in \Ff(U)$ and $V\in \KO(\Xx^p)$ with $U\supseteq V$, we write
$(s,U;V)$ for the \emph{equivalence class} in $\Ff({\uparrow}V)$ represented by $(s,U)$, as defined above. Now, for a clopen partition  $V_1\sqcup\dots\sqcup V_n$ of $V\in\KO(\Xx^p)$, $s_i\in \Ff(U_i)$ with $U_i\in\KO(\Xx)$, and $U_i\supseteq V_i$ for each $i=1,\dots, n$, we denote by
\[
\bigveedot_{i=1}^n (s_i,U_i;V_i) = (s_1,U_1;V_1)\veedot (s_2,U_2;V_2)\veedot\cdots\veedot (s_n,U_n;V_n)
\]
the formal patch of this family, which is patching since $V_1,\dots,V_n$ are disjoint. Taking all these formal patches as distinct does not yield a sheaf as we would lack uniqueness of gluing. However, if we identify two formal patches whenever they admit a \emph{common refinement}, then we get the sheafification $\phi^*\Ff$ of $\phi^\dagger \Ff$.

A formal patch $\inlinebigveedot_{j=1}^{n} (t_j,U'_j;V'_j)$ \emph{refines} the formal patch $\inlinebigveedot_{i=1}^m (s_i,U_i;V_i)$ provided the partition $(V'_j)_{j=1,\dots,n}$ refines the partition $(V_i)_{i=1,\dots,m}$, that is, for each $j$ there is $i$ such that $V'_j\subseteq V_i$, and moreover
 $(t_j,U'_j)\sim_{V'_j}(s_i,U_i)$. Note that the last property does not depend on the chosen representatives on both sides.

Later on, we adopt the convention that \emph{if a construction produces partitions with empty parts, then the latter will be dropped from the patch.} This convention is consistent with the fact that sheaves have singleton value on the empty set.

Since direct image $\phi_*$ is restriction and clopen subsets of $\Xx^p$ correspond to constructible subsets of $\Xx$, we get the following description of the \emph{patch monad}:
\begin{align*}
\PM \Ff:\KO(\Xx)^\op & \to \Set\\
             U &\mapsto \left(\bigcup_{n\geq 1}\{\bigveedot_{i=1}^n (s_i,U_i;V_i)\mid \{V_i\}_{i=1}^n\text{ constructible partition of }U\}\right)/\sim
\end{align*}
The unit of the patch monad is given by
\begin{align*}
\eta_\Ff\colon \Ff &\to \PM \Ff\\
                    (s,U)&\mapsto (s,U;U)
\end{align*}
and for the multiplication $\mu_\Ff\colon \PM ^2\Ff\to \PM \Ff$ let $U\in\KO(\Xx)$ and  $\inlinebigveedot_{i=1}^n (t_i,W_i;V_i)\in (\PM ^2\Ff)(U)$. That is, $\{V_i\}_{i=1}^n$ is a constructible partition of $U$, $V_i\subseteq W_i$, and $t_i\in(\PM \Ff)(W_i)$ for each $i=1,\dots,n$. Now $t_i\in (\PM \Ff)(W_i)$ means that
\[
t_i=\bigveedot_{j=1}^{n_i} (s_{ij},U_{ij};V_{ij})
\]
where $\{V_{i1},\dots,V_{in_i}\}$ is a constructible partition of $W_i$. It follows that $\{V_{ij}\cap V_i\mid 1\leq j\leq n_i, 1\leq i\leq n, \}$ is a constructible partition of $U$.
 Also, $V_{ij}\cap V_i\subseteq U_{ij}$ and
\[
^\cdot \hspace{-17.6pt}\bigvee_{\substack{1\leq j\leq n_i \\ 1\leq i\leq n}} (s_{ij},U_{ij};V_{ij}\cap V_i) \in (\PM \Ff)(U).
\]
The multiplication of the monad $\PM $ is then given by
\[
\mu_\Ff(\,\bigveedot_{i=1}^n (t_i,W_i;V_i))=\ \ ^\cdot \hspace{-17.6pt}\bigvee_{\substack{1\leq j\leq n_i \\ 1\leq i\leq n}} (s_{ij},U_{ij};V_{ij}\cap V_i)
\]
forgetting one level of partition.
\subsection{From $\PM$-algebras to distributive skew lattices}\label{subsct:TtoDSL}
Consider a $\PM$-algebra

\[
\xi_\Ff\colon \PM \Ff\to \Ff.
\]
By definition of a $\PM$-algebra we have $\xi_\Ff\circ\eta_\Ff=id_\Ff$ and $\xi_\Ff\circ \PM \xi_\Ff=\xi_\Ff\circ\mu_\Ff$.

By Proposition~\ref{prp:localsectionband} and Theorem~\ref{thm:main0}, any sheaf $\Ff$ on a spectral space $\Xx$ induces a right distributive band of local sections
\[
\el_{\KO(\Xx)}(\Ff)=\{(s,U)\mid s\in \Ff(U), U\in \KO(\Xx) \}
\]
with multiplication

\[
(s,U)(t,V)=(s,U)\cm(t,V)=(t|_{U\cap V},U\cap V)\in \Ff(U\cap V).\]
We claim that a $\PM$-algebra structure $\xi_\Ff:\PM \Ff\to \Ff$ allows us to extend the right distributive band $\el_{\KO(\Xx)}(\Ff)$ to a right distributive skew lattice $S_{\xi_\Ff}$ by introducing the following global join operation:
\[
(s,U)\cj(t,V):=\xi_\Ff((s,U;U)\veedot(t,V;V\backslash U))\in \Ff(U\cup V).
\]

\begin{prp}\label{prp:Tskew}
Let $\Ff$ be a global sheaf on the spectral space $\Xx$ and $\xi_\Ff\colon \PM \Ff\to \Ff$ be a $\PM$-algebra structure. Then $(\el_{\KO(\Xx)}(\Ff),\cm,\cj)$ is a right distributive skew lattice.\end{prp}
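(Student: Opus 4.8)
The plan is to verify directly that $(\el_{\KO(\Xx)}(\Ff),\cm,\cj)$ satisfies the axioms of a right-handed skew lattice and then to invoke Proposition~\ref{prp:distributive}. The meet band $(\el_{\KO(\Xx)}(\Ff),\cm)$ is already a right distributive, hence right normal, band by Proposition~\ref{prp:localsectionband} and Theorem~\ref{thm:main0}, so the normality hypothesis of Proposition~\ref{prp:distributive} will be automatic, and the lattice reflection $S_{\xi_\Ff}/\Dd\cong\KO(\Xx)$ is bounded distributive by construction. Everything therefore reduces to controlling the join $\cj$ defined through $\xi_\Ff$.

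The first key observation, on which almost all absorption-type identities rest, is a \emph{restriction lemma}: since $\xi_\Ff$ is a morphism of sheaves it commutes with restriction, and restricting the patch $(s,U;U)\veedot(t,V;V\backslash U)$ back to $U$ kills the second, disjoint part, leaving $\eta_\Ff(s)$; using $\xi_\Ff\circ\eta_\Ff=\id_\Ff$ we obtain $((s,U)\cj(t,V))|_U=s$. From this, together with the convention that empty parts of a partition are dropped, I would read off idempotence of $\cj$ and all four absorption laws of a skew lattice: each reduces either to a patch all of whose parts are restrictions of a single section, which $\xi_\Ff\circ\eta_\Ff=\id$ collapses, or to a meet that simply restricts a join back to one of its generating domains.

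Associativity of $\cj$ is where the genuine monad structure enters and is the step I expect to be the main obstacle. Here I would use the algebra identity $\xi_\Ff\circ\PM\xi_\Ff=\xi_\Ff\circ\mu_\Ff$ in the guise of a \emph{flattening lemma}: a formal patch each of whose parts is itself a $\xi_\Ff$-value equals, after applying $\xi_\Ff$, the single patch obtained by intersecting the outer partition with the inner ones, which is exactly $\mu_\Ff$ read off the explicit formula of Section~\ref{sct:patchmonad}. Writing $x=(s,U)$, $y=(t,V)$, $z=(r,W)$, one then checks that $(x\cj y)\cj z$ and $x\cj(y\cj z)$ flatten to $\xi_\Ff$ of one and the same ternary patch over the partition $\{U,\ V\backslash U,\ W\backslash(U\cup V)\}$ of $U\cup V\cup W$ carrying the sections $s,t,r$. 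The same flattening, with an empty trailing part, yields $x\cj y=(x\cj y)\cj x$, so that $(S_{\xi_\Ff},\cj)$ is a left regular band and the skew lattice is right-handed.

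It remains to verify the two distributivity laws of Proposition~\ref{prp:distributive}. For the right law $(x\cj y)\cm z=(x\cm z)\cj(y\cm z)$ both sides are restrictions of the single section $r$ of $z$: the left side is $(r|_{(U\cup V)\cap W},(U\cup V)\cap W)$ by definition of $\cm$, while the right side is a join of two restrictions of $r$, hence a patch all of whose parts come from $r$, collapsed to the same section by $\xi_\Ff\circ\eta_\Ff=\id$. For the left law $x\cm(y\cj z)=(x\cm y)\cj(x\cm z)$ I would again exploit that $\xi_\Ff$ commutes with restriction: restricting $y\cj z=\xi_\Ff\big((t,V;V)\veedot(r,W;W\backslash V)\big)$ to $U\cap(V\cup W)$ produces precisely the patch $(t,V;U\cap V)\veedot(r,W;U\cap(W\backslash V))$ defining $(x\cm y)\cj(x\cm z)$. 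With both distributivity laws established, Proposition~\ref{prp:distributive} delivers symmetry and a distributive lattice reflection at once, completing the proof that $(\el_{\KO(\Xx)}(\Ff),\cm,\cj)$ is a right distributive skew lattice.
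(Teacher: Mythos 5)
Your proposal is correct, and the core of it (unit law to collapse one-part patches, the algebra axiom $\xi_\Ff\circ \PM\xi_\Ff=\xi_\Ff\circ\mu_\Ff$ as a ``flattening'' device for associativity, patch computations for the absorption laws) coincides with the paper's proof; where you genuinely diverge is the endgame. The paper, having established the skew lattice axioms, verifies \emph{symmetry} directly: it writes $(s,U)\cj(t,V)$ and $(t,V)\cj(s,U)$ as threefold patches over $\{U\backslash V,\,U\cap V,\,V\backslash U\}$ and observes that they coincide precisely when $s|_{U\cap V}=t|_{U\cap V}$, i.e.\ when the sections $\cm$-commute; normality and the bounded distributive reflection then come for free from Proposition~\ref{prp:localsectionband}, so the definition of right distributive skew lattice is checked head-on. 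You instead verify the two distributivity identities --- the right law by collapsing a patch all of whose parts are restrictions of one section, the left law by naturality of $\xi_\Ff$ under restriction --- and then invoke the equational characterisation of Proposition~\ref{prp:distributive} to obtain symmetry and distributivity of the reflection in one stroke. Both routes are legitimate since Proposition~\ref{prp:distributive} is an equivalence; in fact the paper travels the implication in the opposite direction, deducing the distributivity laws from symmetry when it needs them in Proposition~\ref{prp:skewsheaf}(i), whereas you produce those laws first and harvest symmetry as a corollary. What your route buys is a systematic use of naturality of $\xi_\Ff$ (your ``restriction lemma'') as the single organising mechanism, plus an explicit check of left regularity of $\cj$ (hence right-handedness), which the paper leaves to the general fact that $(S,\cm)$ right regular forces $(S,\cj)$ left regular in any skew lattice; what the paper's route buys is a self-contained and more transparent picture of why $\cm$- and $\cj$-commutativity both amount to agreement of sections on the overlap.
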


\begin{proof}
The semilattice reflection of $(\el_{\KO(\Xx)}(\Ff),\cm,\cj)$ may be identified with $\KO(\Xx)$ and is thus bounded distributive, cf. Proposition~\ref{prp:localsectionband}.

The unit constraint $\xi_\Ff\circ\eta_\Ff=\id_\Ff$ implies $\xi_\Ff(s,U;U)=(s,U)$ for any $U\in\KO(\Xx)$ and $s\in \Ff(U)$. The associativity constraint of $\xi_\Ff$ implies the associativity of $\cj$. Indeed, consider $U,V,W\in\KO(\Xx)$ and $r\in \Ff(U)$, $s\in \Ff(V)$ and $t\in \Ff(W)$. Then

\begin{enumerate}
\item[$\bullet$] $\{U,V\backslash U\}$ is a constructible partition of $U\cup V$ and thus
\[
\sigma=(r,U;U)\veedot(s,V;V\backslash U)\in (\PM \Ff)(U\cup V);
\]
\hskip 1.8cm $\tau=(t,W;W)\in(\PM \Ff)(W)$;\\
\item[$\bullet$]  $\{U\cup V,W\backslash(U\cup V)\}$ is a constructible partition of $U\cup V\cup W$ and thus
\[
(\sigma,U\cup V;U\cup V)\veedot(\tau,W;W\backslash(U\cup V))\in(\PM ^2\Ff)(U\cup V\cup W).
\]
\end{enumerate}

\noindent Applying $\PM \xi_\Ff:\PM ^2\Ff\to \PM \Ff$ to this element of $(\PM ^2\Ff)(U\cup V\cup W)$ yields\[(\xi_\Ff(\sigma);U\cup V)\veedot(\xi_\Ff(\tau);W\backslash(U\cup V))\in(\PM \Ff)(U\cup V\cup W)\]so that
the associativity constraint $\xi_\Ff\circ \PM \xi_\Ff=\xi_\Ff\circ\mu_\Ff$ implies
\[
\xi_\Ff((\xi_\Ff(\sigma);U\cup V){\sveedot} (\xi_\Ff(\tau);W\backslash(U\cup V))=\xi_\Ff(\mu_\Ff((\sigma,U\cup V;U\cup V)\sveedot(\tau,W;W\backslash(U\cup V))),
\]
from which in turn follows that
\begin{align*}
((r,U)\cj(s,V))\cj(t,W)&=\xi_\Ff((\xi_\Ff((r,U;U)\veedot(s,V;V\backslash U)); U\cup V)\veedot(t,W;W\backslash(U\cup V)))\\
   &=\xi_\Ff((\xi_\Ff(\sigma);U\cup V) \veedot (\xi_\Ff(\tau);W\backslash(U\cup V)))\\
   &=\xi_\Ff(\mu_{\Ff}((\sigma,U\cup V;U\cup V)\veedot(\tau,W;W\backslash(U\cup V)))\\
   &=\xi_\Ff((r,U;U)\veedot(s,V;V\backslash U)\veedot(t,W;W\backslash(U\cup V)))
\end{align*}
The last term involves a threefold patch in $\PM \Ff$ which is associative because $\PM \Ff$ is a sheaf. A similar argument shows that $(r,U)\cj((s,V)\cj(t,W))$ equals the image by $\xi_\Ff$ of the same threefold patch, thus proving associativity of $\cj$.

The four absorption laws are also established in a straightforward way:
\begin{align*}
((s,U)\cm(t,V))\cj(t,V)&= (t|_{U\cap V}, U\cap V)\cj(t,V)\\
                                          &=\xi_\Ff((t|_{U\cap V},U\cap V; U\cap V)\veedot(t,V;V\backslash(U\cap V))\\
                                          &=\xi_\Ff((t,V;V))= (t,V)
\end{align*}
and
\begin{align*}
(s,U)\cj((s,U)\cm(t,V))&= (s,U)\cj(t|_{U\cap V}, U\cap V)\hskip1.2cm\\
                                          &=\xi_\Ff((s,U;U)\veedot (t|_{U\cap V}, U\cap V;\emptyset))\\
                                          &=\xi_\Ff(s,U;U)=(s,U).
                                          \end{align*}
Also, since $\cm$ is right restriction, we have
\begin{align*}
((s,U)\cj(t,V))\cm(t,V)= (t,V)|_{(U\cup V)\cap U}=(t,V)
\end{align*}
and
\begin{align*}(s,U)\cm((s,U)\cj(t,V))
                    &=((s,U)\cj(t,V))|_U\\
                    &=\xi_\Ff((s,U;U)\veedot(t,V;V\backslash U))|_U\\
                    &=\xi_\Ff((s,U;U)\veedot(t,V;\emptyset))\\
                    &=\xi_\Ff((s,U;U))=(s,U).
\end{align*}

All that remains to be shown is the symmetry of $(\el_{\KO(\Xx)}(\Ff),\cm,\cj)$. That is, for local sections $(s,U),(t,V)$ of the spectral sheaf $\Ff$, we have to show that
\begin{align*}
&(s,U)\cm(t,V)=(t,V)\cm(s,U)\qquad(\cm\text{-commutativity}) \\
\quad\text{if and only if}\quad &(s,U)\cj(t,V)=(t,V)\cj(s,U) \qquad (\cj\text{-commutativity}).
\end{align*}
Note that $\cm$-commutativity holds if and only if $s|_{U\cap V}=t|_{U\cap V}$  if and only if
\[
(s, U; U\cap V)=(t, V; V\cap U)
\]
in $\PM \Ff$. Note also that
\begin{align*}
(s,U)\cj(t,V)&=\xi_\Ff((s,U;U)\veedot(t,V;V\backslash U))\\
                     &=\xi_\Ff((s,U;U\backslash V)\veedot(s,U;U\cap V)\veedot(t,V;V\backslash U))
\end{align*}
and
\begin{align*}
(t,V)\cj(s,U)&=\xi_\Ff((t,V;V)\veedot(s,U;U\backslash V))\\
                     &=\xi_\Ff((t,V;V\backslash U)\veedot(t,V;V\cap U)\veedot(s,U;U\backslash V)).
\end{align*}
It is thus clear that $(\el_{\KO(\Xx)}(\Ff),\cm,\cj)$ is a symmetric skew lattice as claimed.
\end{proof}
\subsection{From distributive skew lattices to $\PM$-algebras}\label{subsct:DSLtoT}

We start with a fundamental refinement property of constructible partitions of spectral spaces. Under Stone duality these correspond to Boolean partitions of bounded distributive lattices so that the refinement property can be formulated in both contexts. For the convenience of the reader we give individual proofs of the two dual statements.

We introduce the following terminology: a \emph{chain partition} of $\Xx$ is any constructible partition $(U_i)_{i=1,\dots,n}$ associated with an ascending chain of compact opens $$\emptyset=\Xx_0\subset \Xx_1\subset\cdots\subset \Xx_{n-1}\subset \Xx_n=\Xx$$ by letting $U_i=\Xx_i-\Xx_{i-1}$ for $i=1,\dots,n.$ Observe that, by definition, a chain partition is linearly ordered, while a general constructible partition is not.

\begin{lma}\label{lma:diffchainfin}Any constructible partition of a spectral space $\Xx$ can be refined to a chain partition.\end{lma}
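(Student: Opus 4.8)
The plan is to reduce the statement to a finite combinatorial fact about the lattice of compact opens and then to peel off, one at a time, the atoms of a finite Boolean algebra attached to the given partition. So, starting from a constructible partition $(V_i)_{i=1}^m$ of $\Xx$, I would first observe that each $V_i$ is by definition a Boolean combination of finitely many compact opens; collecting these over all $i$ produces a finite family $U_1,\dots,U_k\in\KO(\Xx)$ such that every $V_i$ lies in the finite Boolean subalgebra $B\subseteq\Pp(\Xx)$ that they generate. Let $\mathrm{At}$ be the set of atoms of $B$, namely the nonempty sets $\alpha_S=\bigcap_{i\in S}U_i\cap\bigcap_{i\notin S}(\Xx\setminus U_i)$ for $S\subseteq\{1,\dots,k\}$. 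These atoms form a finite partition of $\Xx$ into constructible sets, and since each $V_i\in B$ is a union of atoms, this atomic partition already refines $(V_i)_{i=1}^m$: every atom sits inside exactly one part. It therefore suffices to arrange the atoms into a chain partition.

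The next step is to determine which unions of atoms are compact open. Recall from Section~\ref{subsct:patch} the specialisation order $x\le y\iff x\in\overline{\{y\}}$, for which the open subsets of $\Xx$ are exactly the upsets, and for which a \emph{constructible} subset is open precisely when it is an upset (the standard characterisation of open constructible sets in a spectral space, cf. Hochster \cite{Hoch}). Since two distinct atoms $\alpha_S,\alpha_{S'}$ are separated by some $U_i$, which is itself an upset, the specialisation order descends to a genuine partial order on the finite set $\mathrm{At}$, and the compact opens lying in $B$ are precisely the upsets of $(\mathrm{At},\le)$. In particular $\emptyset$ and $\Xx$ are such upsets.

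Finally I would build the chain explicitly. Enumerating the atoms as $\alpha_1,\dots,\alpha_n$ by repeatedly choosing a $\le$-maximal atom among those not yet picked, every initial union $\Xx_j=\alpha_1\cup\cdots\cup\alpha_j$ is an upset, hence compact open, with $\Xx_0=\emptyset$ and $\Xx_n=\Xx$. The associated chain partition then has layers $U_j=\Xx_j\setminus\Xx_{j-1}=\alpha_j$, each a single atom and thus contained in a single part $V_i$; this is the desired refinement of $(V_i)_{i=1}^m$.

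The main obstacle is the second step: one genuinely needs that a constructible subset of a spectral space is open exactly when it is an upset for the specialisation order, so that the compact opens inside $B$ coincide with the upsets of the finite poset $(\mathrm{At},\le)$. Granting this, the peeling-off argument — a linear extension of the opposite order — is purely combinatorial and yields single-atom layers automatically. An alternative, entirely order-theoretic route would be to apply finite Birkhoff duality to the finite distributive lattice of compact opens in $B$, whose join-irreducibles correspond to the atoms of $B$, so that a maximal chain adds exactly one atom at each step. The dual statement for Boolean partitions of a bounded distributive lattice $L$ is proved in the same spirit, peeling off the atoms of the finite Boolean algebra generated inside $L_B$ by the partition, this time along the lattice order of $L$.
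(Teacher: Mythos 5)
Your argument is correct, but it is a genuinely different route from the paper's own proof of Lemma~\ref{lma:diffchainfin} --- and, amusingly, it is essentially the route the paper takes for the \emph{dual} statement, Lemma~\ref{diffchainfinbis}. The paper proves the present lemma by a direct induction on the number of parts, staying entirely inside $\KO(\Xx)$: each part is written as a difference $V_i-V_i'$ of compact opens, the inner set $V_i'$ then carries an $(n-1)$-part constructible partition to which the inductive hypothesis applies, and the chain partitions obtained for the various $V_i$ are assembled into a single chain for $\Xx$ by taking intersections in lexicographic order; no specialisation order and no patch/Priestley machinery enter. You instead reduce to the finite Boolean subalgebra $B$ of $\Pp(\Xx)$ generated by the compact opens defining the partition, order its atoms by specialisation, and peel off maximal atoms greedily. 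What this buys is conceptual clarity: the chain layers are single atoms, and the spectral lemma and its lattice-theoretic dual become literally the same argument --- indeed the paper's proof of Lemma~\ref{diffchainfinbis} reduces via Birkhoff duality to a finite poset and builds the chain of upsets by exactly your greedy choice of maximal elements. What it costs is the external input that a \emph{constructible} subset of $\Xx$ is open if and only if it is an upset for specialisation (Hochster/Priestley), which the paper's self-contained induction avoids. Two small points you should tighten: first, the opens of $\Xx$ are \emph{not} ``exactly the upsets'' in general (upsets need not be open in an infinite spectral space); only the restricted claim for constructible subsets is true, and it is the only one you use. Second, the relation on atoms induced pointwise by specialisation need not be transitive as it stands, so you should pass to its reflexive--transitive closure; that this closure is antisymmetric --- hence a genuine partial order, so that your enumeration by maximal elements exists and every initial segment is an upset --- follows from precisely the observation you make, namely that distinct atoms are separated by some $U_i$ and each $U_i$ is an upset, forcing the index sets $S$ to increase weakly along specialisation and hence to be constant around any cycle.
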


\begin{proof}Let $(U_i)_{i=1,\dots,n}$ be a constructible partition of $\Xx$. Each constructible subset of $\Xx$ can be partitionned into constructible subsets of the form $V_i-V'_i$ for a pair of compact opens $V'_i\subset V_i$. We can thus assume without loss of generality that the individual $U_i$ have already this form $V_i-V'_i$. Consequently, $V'_i$ is contained in the union $U_1\sqcup\cdots\sqcup U_{i-1}\sqcup U_{i+1}\sqcup\cdots\sqcup U_n$ and we get a constructible partition$$V'_i=(U_1\cap V'_i)\sqcup\cdots\sqcup(U_{i-1}\cap V'_i)\sqcup(U_{i+1}\cap V'_i)\sqcup\cdots\sqcup(U_n\cap V'_i)$$in which each of the $n-1$ members is again the difference of two compact open subsets of $\Xx$. We can thus make an induction on $n$ and assume that the latter refines to a chain partition. By adding $U_i$ as last element we get a chain partition of $V_i$. Each member of this chain partition of $V_i$ is contained in some $U_j$.

The thus constructed chains $\emptyset=V_{0,i}\subset\cdots\subset V_{n(i),i}=V_i$ for the individual $V_i$ assemble into a chain for $V_1\cup\cdots\cup V_n=\Xx$ by taking intersections $V_{i_1,1}\cap\cdots\cap V_{i_n,n}$ in lexicographical order. The total chain has then the property that each difference of two successive chain members is contained in some $U_j$. In other words, the associated chain partition refines the given partition of $\Xx$.\end{proof}

We shall now formulate and prove the Stone-dual of Lemma~\ref{lma:diffchainfin}. Let $L$ be a bounded distributive lattice. A \emph{Boolean partition} of $L$ is a finite collection $\{b_1,\dots,b_n\}$ of elements of the \emph{Boolean envelope} $L_B$ of $L$ such that
\begin{enumerate}
\item $b_i\wedge b_j=0$ whenever $i\neq j$;
\item $\bigvee_{i=1}^nb_i=1$.
\end{enumerate}

Such a Boolean partition is called a \emph{Boolean chain partition} if there is a sequence $0=a_0<a_1<\dots<a_n=1$ of elements of $L$ such that $a_i-a_{i-1}=b_i$ for $1\leq i\leq n$.

\begin{lma}\label{diffchainfinbis}Each Boolean partition of a bounded distributive lattice can be refined to a Boolean chain partition.\end{lma}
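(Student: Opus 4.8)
The plan is to dualise the spectral argument of Lemma~\ref{lma:diffchainfin} algebraically, but in a slightly slicker form that avoids an explicit induction. The starting observation is that, although the parts $b_1,\dots,b_n$ live in the Boolean envelope $L_B$, each of them is a finite Boolean combination of finitely many elements of $L$. Let $L_0\subseteq L$ be the bounded sublattice generated by all these finitely many elements of $L$ together with $0$ and $1$. Since a finitely generated bounded distributive lattice is finite, $L_0$ is a finite distributive lattice, and by construction every $b_i$ lies in the Boolean subalgebra $L_{0,B}\subseteq L_B$ generated by $L_0$. First I would pick any maximal chain $0=a_0<a_1<\dots<a_m=1$ in $L_0$; since $L_0$ is finite, maximality forces each step $a_{j-1}<a_j$ to be a covering relation. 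The candidate refinement is then the family $e_j=a_j-a_{j-1}=a_j\wedge\neg a_{j-1}$ (computed in $L_B$), which is by construction of the required form, the $a_j$ being elements of $L$.

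The heart of the argument, and the step I expect to be the main obstacle, is to show that each $e_j$ is an ``atom relative to $L_0$'': for every $x\in L_0$ one has either $e_j\leq x$ or $e_j\wedge x=0$. I would prove this by a short computation exploiting the covering relation $a_{j-1}<a_j$. Setting $y=a_j\wedge x\in L_0$, the element $a_{j-1}\vee y$ lies in the interval $[a_{j-1},a_j]$, which by covering equals $\{a_{j-1},a_j\}$. If $a_{j-1}\vee y=a_{j-1}$ then $y\leq a_{j-1}$ and hence $e_j\wedge x=y\wedge\neg a_{j-1}=0$; if instead $a_{j-1}\vee y=a_j$, then distributivity in $L_B$ gives $e_j=a_j\wedge\neg a_{j-1}=(a_{j-1}\vee y)\wedge\neg a_{j-1}=y\wedge\neg a_{j-1}\leq y\leq x$, so $e_j\leq x$. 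The only subtlety is to make sure all meets and joins are computed consistently along the lattice embeddings $L_0\hookrightarrow L\hookrightarrow L_B$, which is immediate since these are lattice embeddings.

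Finally I would upgrade this relative atomicity from generators to all of $L_{0,B}$: the set of $x\in L_B$ with $e_j\leq x$ or $e_j\wedge x=0$ is closed under $\neg$, $\wedge$ and $\vee$, hence is a Boolean subalgebra of $L_B$; since it contains $L_0$ it contains $L_{0,B}$, and in particular every $b_i$. As $\{b_i\}$ is a partition of $1$ and $e_j\neq 0$, the disjoint join $\bigvee_i(e_j\wedge b_i)=e_j$ forces exactly one term to equal $e_j$, i.e. $e_j\leq b_i$ for a unique $i$. Thus $\{e_j\}_{j=1}^m$ is a Boolean partition (pairwise disjoint, joining by telescoping to $a_m-a_0=1$) refining $\{b_i\}$, and it is a chain partition by construction. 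For readers preferring a direct translation, the inductive scheme of Lemma~\ref{lma:diffchainfin} can be mirrored, reducing each $b_i$ to the form $c_i\wedge\neg d_i$ with $d_i\leq c_i$ in $L$ via disjunctive normal form and then peeling off parts; I find the finite-sublattice route cleaner, since the refinement condition collapses to the single relative-atomicity computation above.
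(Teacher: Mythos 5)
Your proof is correct, but it takes a genuinely different route from the paper's after the common first step. Both arguments begin identically: since the $b_i$ are Boolean combinations of finitely many elements of $L$, one passes to the finite bounded sublattice these generate (your $L_0$, the paper's $K$). From there the paper invokes Birkhoff's representation theorem (Stone duality in the finite case) to translate the problem into one about finite posets: it constructs a chain of upsets of the poset of join-irreducibles by repeatedly adjoining a single maximal point of the complement, so that each successive difference is a singleton and hence trivially contained in some part of the given partition. You instead stay entirely inside the lattice: you take a maximal chain $0=a_0<\dots<a_m=1$ in $L_0$, prove via the covering relation and distributivity that each difference $e_j=a_j\wedge\neg a_{j-1}$ is an atom relative to $L_0$ (for $x\in L_0$, either $e_j\leq x$ or $e_j\wedge x=0$), propagate this to the Boolean subalgebra generated by $L_0$ by the closure argument, and conclude that each $e_j$ sits below exactly one $b_i$. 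Your relative atoms are exactly the Birkhoff duals of the paper's singletons --- under $L_0\cong$ upsets of join-irreducibles, covering relations correspond to adjoining one point and $e_j$ corresponds to that point --- so the two proofs mirror each other; what yours buys is a self-contained, purely order-algebraic argument needing no duality machinery (only injectivity of $L\to L_B$, which you use for $e_j\neq 0$ and the consistency of the embeddings), while the paper's is shorter once Birkhoff is granted and fits the duality-theoretic theme of the surrounding section. All the individual steps you give check out: the covering computation splitting into the two cases $a_{j-1}\vee y\in\{a_{j-1},a_j\}$, the verification that $\{x\in L_B : e_j\leq x\text{ or }e_j\wedge x=0\}$ is a Boolean subalgebra, the telescoping join $\bigvee_{j\leq k}e_j=a_k$, and the pairwise disjointness of the $e_j$.
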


\begin{proof}
Each element of a Boolean partition $b_1\vee\cdots\vee b_n$ of $L$ is a finite Boolean term in elements of $L$. There is thus a \emph{finite} bounded distributive sublattice $K$ of $L$ such that all $b_i$ belong to $K_B$ viewed as a Boolean sublattice of $L_B$.

The spectrum of a finite distributive lattice is a finite spectral space, and the topology of a finite spectral space is completely determined by its specialisation order. Therefore, in this finitary context, Stone duality amounts to \emph{Birkhoff's representation theorem} for finite distributive lattices. Using the latter we are reduced to showing that for any finite partially ordered set $P$, and any set-theoretical partition $P=Q_1\sqcup\cdots\sqcup Q_n$, there exists a chain of upsets $\emptyset=P_0\subset\cdots\subset P_m=P$ such that each successive difference $P_j-P_{j-1}$ is contained in one of the $Q_i$.

Such a chain of upsets can be constructed inductively. Suppose $P_0\subset\cdots\subset P_{k-1}$ with the required properties has already been constructed. If $P=P_{k-1}$ we are done. Otherwise, pick $p\in P-P_{k-1}$ maximal with respect to the induced partial order on $P-P_{k-1}$. There is a unique $i$ such that $p\in Q_i$. Then put $P_k=P_{k-1}\cup ({\uparrow}p)$. This is an upset of $P$ such that $P_k-P_{k-1}=\{p\}\subset Q_i$. The inductive construction terminates after a finite number of iterations since $P$ is finite.\end{proof}

\begin{rmk}It is an interesting problem to seek for chain refinements of minimal length. In the proof of Lemma~\ref{diffchainfinbis}, the initial input is the number of elements of $L$ needed in the Boolean expressions for the individual $b_i$. See \cite{BGKS} for an inductive construction of ``difference chains'' of minimal length for the elements of $L_B$.\end{rmk}

Given two constructible partitions $(U_i)_{i\in\{1,\dots,m\}}$ and $(V_j)_{j\in\{1,\dots,n\}}$ of a spectral space $\Xx$, if $(U_i)$ refines $(V_j)$, then there is a unique (surjective) \emph{choice function} $f:\{1,\dots,m\}\to\{1,\dots,n\}$ such that $U_{i}\subset V_{f(i)}$ for $i=1,\dots,m$. If $(U_i)$ and $(V_j)$ are chain partitions, we say that $(U_i)$ \emph{chain refines} $(V_j)$ if $(U_i)$ refines $(V_j)$ and the associated choice function is order-preserving. This is the case if and only if the defining chain of the former partition refines the defining chain of the latter.

A \emph{formal chain patch} of a sheaf $\Ff$ on $\Xx$ is a formal patch $\inlinebigveedot_{i=1}^m (s_i,U'_i;U_i)$ in the sense of Section~\ref{sct:patchmonad} such that the underlying partition $(U_i)$ is a chain partition.

\begin{lma}\label{lma:chainrefine}Any two chain partitions $(U_i)$ and $(V_j)$ have a common chain refinement $(W_{ij})$ where $W_{ij}=U_i\cap V_j$ and empty terms are discarded.\end{lma}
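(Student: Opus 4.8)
The plan is to realise the intersection partition $(W_{ij})$ as the chain partition attached to one explicit ascending chain of compact opens, indexed lexicographically. Write the defining chains of the two given chain partitions as
$\emptyset=\Xx_0\subset\Xx_1\subset\cdots\subset\Xx_m=\Xx$ with $U_i=\Xx_i-\Xx_{i-1}$, and
$\emptyset=\Yy_0\subset\Yy_1\subset\cdots\subset\Yy_n=\Xx$ with $V_j=\Yy_j-\Yy_{j-1}$, all terms lying in $\KO(\Xx)$. First I would note that $(W_{ij})$ refines both $(U_i)$ and $(V_j)$ as constructible partitions, since $W_{ij}=U_i\cap V_j\subseteq U_i$ and $W_{ij}\subseteq V_j$; the whole content is therefore to show that this common refinement can itself be organised as a chain partition compatibly with each factor.

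The key step is the construction. I would order the index pairs $(i,j)$ lexicographically and set $Z_{ij}=\Xx_{i-1}\cup(\Xx_i\cap\Yy_j)$. Each $Z_{ij}$ lies in $\KO(\Xx)$ because the compact opens form a bounded distributive sublattice of $\Omega(\Xx)$, closed under finite meets and joins. One checks that the family $(Z_{ij})$ is ascending along the lexicographic order: within a block $\Yy_{j-1}\subseteq\Yy_j$ gives $Z_{i,j-1}\subseteq Z_{ij}$, while across consecutive blocks $Z_{i,n}=\Xx_{i-1}\cup\Xx_i=\Xx_i\subseteq\Xx_i\cup(\Xx_{i+1}\cap\Yy_1)=Z_{i+1,1}$, and the final term is $Z_{m,n}=\Xx_{m-1}\cup\Xx=\Xx$. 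The decisive computation is that the successive differences are exactly the required blocks: since $Z_{i,j-1}\supseteq\Xx_{i-1}$, removing it deletes $\Xx_{i-1}$ and $\Xx_i\cap\Yy_{j-1}$, leaving
\[
Z_{ij}-Z_{i,j-1}=(\Xx_i\cap\Yy_j)-(\Xx_{i-1}\cup\Yy_{j-1})=(\Xx_i-\Xx_{i-1})\cap(\Yy_j-\Yy_{j-1})=U_i\cap V_j=W_{ij},
\]
with the analogous identities at block starts and at the initial term. Discarding the pairs with $W_{ij}=\emptyset$, along which the chain is stationary, yields a strictly ascending chain of compact opens whose associated chain partition is precisely $(W_{ij})$.

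It remains to record the two refinement relations, and here lies the only genuine subtlety. The lexicographic chain $(Z_{ij})$ contains every $\Xx_i$ (namely $Z_{i,n}=\Xx_i$), so it refines the defining chain of $(U_i)$, and by the characterisation of chain refinement through defining chains recalled above, $(W_{ij})$ chain refines $(U_i)$. The asymmetry is that lexicographic ordering privileges the first factor: the choice function to $(V_j)$ is the second projection $(i,j)\mapsto j$, which need not be order preserving, and in fact \emph{no} single linear ordering of the pairs can make both projections monotone unless one of the partitions is trivial. I would resolve this symmetrically, by the companion chain $Z'_{ij}=\Yy_{j-1}\cup(\Yy_j\cap\Xx_i)$ taken in anti-lexicographic order, which realises the \emph{same} partition $(W_{ij})$ as a chain partition whose defining chain contains each $\Yy_j$ and therefore chain refines $(V_j)$. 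Thus $(W_{ij})$ is a common refinement carrying a chain structure adapted to either factor. I expect the main obstacle to be purely the bookkeeping: verifying that every partial union $Z_{ij}$ stays compact open, that the successive differences are exactly the $W_{ij}$, and that the two compatible chain structures are correctly matched to the two factors, rather than any deep difficulty.
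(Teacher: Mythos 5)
Your construction is correct, and it is in fact more careful than the paper's own proof, which disposes of the lemma in one sentence: take all binary intersections of the members of the two defining chains and order them by inclusion; this is claimed to define a chain whose chain partition chain refines both $(U_i)$ and $(V_j)$. Read literally, that argument fails twice, and your proposal identifies and repairs both failures. First, the binary intersections $\Xx_i\cap\Yy_j$ are \emph{not} totally ordered by inclusion: already for $\Xx=\{a,b\}$ discrete, with defining chains $\emptyset\subset\{a\}\subset\Xx$ and $\emptyset\subset\{b\}\subset\Xx$, the intersections $\{a\}=\Xx_1\cap\Yy_2$ and $\{b\}=\Xx_2\cap\Yy_1$ are incomparable. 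Your padded terms $Z_{ij}=\Xx_{i-1}\cup(\Xx_i\cap\Yy_j)$ in lexicographic order, together with the verification $Z_{ij}-Z_{i,j-1}=U_i\cap V_j$, supply exactly the missing construction of a genuine defining chain; your checks of compact openness, monotonicity, the block transitions, and the discarding of stationary steps are all correct. Second, you are right that the lemma cannot be read as producing a \emph{single} chain partition whose choice functions to both factors are order preserving: in the two-point example above, a chain refinement of $(U_1,U_2)=(\{a\},\{b\})$ must list $\{a\}$ first, while one of $(V_1,V_2)=(\{b\},\{a\})$ must list $\{b\}$ first. So the statement is only true in the form you prove it: the same underlying partition $\{U_i\cap V_j\}$ carries a lexicographic chain structure chain refining $(U_i)$ and an anti-lexicographic one chain refining $(V_j)$. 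This is a genuine sharpening, not a cosmetic reformulation; note, however, that the downstream use in the proof of Lemma \ref{lma:Talg}(b) then needs a supplementary step which neither you nor the paper supply, namely that the joins of the common restrictions $u_{ij}$ computed along the two different chain structures coincide (Lemma \ref{lma:Talg}(a) does not give this, since it compares representatives over one fixed ordering).

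One small overstatement: your parenthetical claim that no single linear ordering can make both projections monotone ``unless one of the partitions is trivial'' is too strong. For aligned partitions, e.g. $U_1=\{a\}$, $U_2=\{b,c\}$ and $V_1=\{a,b\}$, $V_2=\{c\}$ in a three-point discrete space, the lexicographic order $\{a\},\{b\},\{c\}$ arises from the chain $\emptyset\subset\{a\}\subset\{a,b\}\subset\{a,b,c\}$ and is monotone over both factors, although neither partition is trivial. The correct assertion is that a common ordering need not exist (your crossing example), not that it never exists for nontrivial pairs. Since your proof never invokes this remark, nothing in the argument is affected.
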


\begin{proof}Take all binary intersections of the members of the two defining chains and order them by inclusion. This defines a chain of compact opens whose chain partition $(W_{ij})$ chain refines $(U_i)$ and $(V_j)$.\end{proof}

We will say that a choice of representatives $s_i\in\Ff(U'_i)$ for the formal chain patch $\inlinebigveedot_{i=1}^m (s_i,U'_i;U_i)$ is \emph{admissible} if $U'_1\cup U'_2\cup\cdots\cup U'_i= U_1\sqcup U_2\sqcup\cdots\sqcup U_i$ for all $i=1,\dots,m$. So, admissibility concerns uniquely the domain of definition of the local sections. In the case of a formal \emph{chain} patch such an admissible choice of the domains always exists since the partial unions $U_1\sqcup U_2\sqcup\cdots\sqcup U_i$ are compact open subsets of $\Xx$ so that the local sections can be restricted if necessary.


\begin{lma}\label{lma:Talg}Let $\Ff$ be a global sheaf on a spectral space $\Xx$ such that the right distributive band $\el_{\KO(\Xx)}(\Ff)$ extends to a right distributive skew lattice $S_\Ff$.
\begin{enumerate}\item[(a)]For a formal chain patch $\inlinebigveedot_{j=1}^n (t_j,V'_j;V_j)$ of $\Ff$, the join $t_1\cj\cdots\cj t_n$ in $S_\Ff$ is independent of any admissible choice of representatives $t_j\in\Ff(V'_j)$.
\item[(b)]Let $\inlinebigveedot_{i=1}^m (s_i,U'_i;U_i)$ and $\inlinebigveedot_{j=1}^n (t_j,V'_j;V_j)$ be formal chain patches of $\Ff$ such that $U_1\sqcup\cdots\sqcup U_m=V_1\sqcup\cdots\sqcup V_n$ and for any non-empty intersection $U_i\cap V_j$ we have $(s_i,U'_i)\sim_{U_i\cap V_j}(t_j,V'_j)$. Then for any admissible representatives on both sides we get the same joins $s_1\cj\cdots\cj s_m=t_1\cj\cdots\cj t_n$ in $S_\Ff$.\end{enumerate}\end{lma}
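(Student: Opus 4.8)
The plan is to reduce both parts to the product decomposition of Proposition~\ref{prp:skewsheaf}(iii), which presents each restriction $\Ff(\Xx_i)\to\Ff(\Xx_{i-1})$ (for compact opens $\Xx_{i-1}\subset\Xx_i$) as a projection with fibre $\Ff_{\Xx_i\backslash\Xx_{i-1}}(\Xx_i)$ and quotient map $q_i$. Write $\Xx_0\subset\cdots\subset\Xx_n$ for the defining chain, $U_i=\Xx_i-\Xx_{i-1}$, and $j_i=s_1\cj\cdots\cj s_i\in\Ff(\Xx_i)$; note admissibility amounts to $U_i\subseteq U'_i\subseteq\Xx_i$. Three elementary facts drive the argument. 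First, a \emph{restriction property}: the absorption law $j_{i-1}\cm(j_{i-1}\cj s_i)=j_{i-1}$ gives $j_i|_{\Xx_{i-1}}=j_{i-1}$. Second, restriction \emph{distributes over the join}, $(y\cj x)|_W=(y|_W)\cj(x|_W)$, by the left distributivity law of Proposition~\ref{prp:distributive}. Third, a \emph{fibre-localisation}: for compact open $W$ with $U_i\subseteq W\subseteq\Xx_i$, the sheaf gluing condition for the open cover $\{W,\Xx_{i-1}\}$ of $\Xx_i=W\cup\Xx_{i-1}$ is a pullback square (using quasi-flasqueness, Proposition~\ref{prp:skewsheaf}(ii), for surjectivity), yielding a canonical bijection $r_W\colon\Ff_{U_i}(\Xx_i)\eqv\Ff_{U_i}(W)$ induced by restriction.

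For part (a) I would argue by induction on $n$. Two admissible choices give $j_n=j_{n-1}\cj s_n$ and $\tilde\jmath_n=\tilde\jmath_{n-1}\cj\tilde s_n$; by the restriction property both restrict on $\Xx_{n-1}$ to the $(n-1)$-fold joins, which agree by the inductive hypothesis applied to the first $n-1$ terms. Since Proposition~\ref{prp:skewsheaf}(iii) identifies $\Ff(\Xx_n)$ with $\Ff(\Xx_{n-1})\times\Ff_{U_n}(\Xx_n)$, it remains to match $q_n(j_n)$ and $q_n(\tilde\jmath_n)$. Applying $r_{U'_n}$ and distributing restriction over the join gives $r_{U'_n}(q_n(j_n))=q'_n\big((j_{n-1}|_{C_n})\cj s_n\big)=q'_n(s_n)$, where $C_n=U'_n\cap\Xx_{n-1}$ and the last equality is the defining coequaliser relation of Proposition~\ref{prp:skewsheaf}(iii). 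Thus $q_n(j_n)$ is the transport of $s_n$ through the fibre bijections, and a further application of $r_W$ over a common domain $W$ on which the two representatives of the germ agree shows it depends only on the class $(s_n,U'_n;U_n)$; hence $q_n(j_n)=q_n(\tilde\jmath_n)$.

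For part (b) the point is \emph{invariance under chain refinement}. By part (a) the join is independent of admissible representatives, so it suffices to treat a single \emph{elementary} refinement inserting one compact open $\Xx_{i-1}\subset\Xx'\subset\Xx_i$ and splitting $U_i$ into $P=\Xx'-\Xx_{i-1}$ and $Q=\Xx_i-\Xx'$; a general chain refinement is a finite composite of such steps. Choosing the admissible representatives $s_P=s_i|_{U'_i\cap\Xx'}$ and $s_Q=s_i$, one has $s_P\leq s_Q$, whence $s_P\cj s_Q=s_i$ (as $c\leq d$ forces $c\cj d=d$ by absorption), so associativity of $\cj$ shows the refined join equals the original. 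Given the two chain patches of the statement, I would then pass to the common chain refinement $(U_i\cap V_j)$ of Lemma~\ref{lma:chainrefine}: the hypothesis $(s_i,U'_i)\sim_{U_i\cap V_j}(t_j,V'_j)$ says that on each nonempty $U_i\cap V_j$ the two refined families represent the \emph{same} germ, so they are the same formal chain patch, and refinement invariance yields $s_1\cj\cdots\cj s_m=t_1\cj\cdots\cj t_n$.

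I expect the main obstacle to be the fibre-localisation bijection $r_W$ and its bookkeeping: one must show that the section data ``supported on the difference $U_i$'' is independent both of the ambient compact open $\Xx_i$ and of the chosen representative, and the honest route is the sheaf pullback square for $\{W,\Xx_{i-1}\}$ together with quasi-flasqueness. A tempting but circular shortcut—arguing pointwise that $a\cj b$ agrees with $b$ at the points of $B\backslash A$—breaks down at boundary points lying in $\overline{A}$, which is exactly why the coequaliser/fibre description of Proposition~\ref{prp:skewsheaf}(iii), rather than a stalkwise description of the join, must be used.
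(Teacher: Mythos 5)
Your proposal is correct, but it takes a genuinely different route from the paper, most visibly in part (a). The paper settles (a) by pure band algebra: from $t_j\sim_{V_j}\tilde t_j$ it takes a common restriction $u$ of $t_j$ and $\tilde t_j$ over some compact open $W\supseteq V_j$; left regularity of $(S_\Ff,\cj)$ then makes $t\cj u$ a common lower bound of $t\cj t_j$ and $t\cj\tilde t_j$, admissibility puts all three sections in the same $\Dd$-class, and order-discreteness of $\Dd$-classes (Proposition~\ref{prp:basic}(iv)) forces equality --- no sheaf theory at all. You instead invoke the decomposition $\Ff(\Xx_n)\cong\Ff(\Xx_{n-1})\times\Ff_{U_n}(\Xx_n)$ of Proposition~\ref{prp:skewsheaf}(iii) and transport germs through the restriction-induced maps $r_W$ on fibres. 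This does work --- in fact only \emph{injectivity} of $r_W$ is needed, and it follows from the separation half of the sheaf condition for the cover $\{W,\Xx_{n-1}\}$ once one notes that $q(x)=q(\tilde x)$ exactly when $y\cj x=y\cj\tilde x$ for some, equivalently every, $y$ over the smaller open --- but it makes you prove an auxiliary fibre-localisation lemma that the paper never needs; the ``main obstacle'' you flag is thus an artifact of your route rather than of the problem. In part (b) the two arguments are structurally parallel: both pass to the common chain refinement of Lemma~\ref{lma:chainrefine} (so both rely equally on that lemma producing a partition that chain-refines \emph{both} sides) and then reduce to (a). The reductions differ: the paper groups the $s_i$ with $f(i)=j$ into consecutive blocks and uses the distributivity law of Proposition~\ref{prp:distributive} to get $s_{i_1}\cj\cdots\cj s_{i_{m(j)}}=(s_{i_1}\cj\cdots\cj s_{i_{m(j)}})\cm t_j\leq t_j$, hence $V_j$-equivalence to $t_j$; you insert one compact open of the refining chain at a time and use only absorption ($s_P\leq s_Q$ gives $s_P\cj s_Q=s_Q$), associativity, and part (a). Your insertion step is correct and arguably more elementary; what the paper's proof buys is overall economy --- a two-line order-theoretic argument for (a) and no fibre machinery anywhere --- while yours buys a more transparent geometric picture of the join as ``restriction to the previous chain member plus germ in the fibre''.
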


\begin{proof}(a) Let $\inlinebigveedot_{j=1}^n (t_j,V'_j;V_j)$ and $\inlinebigveedot_{j=1}^n (\tilde{t}_j,\tilde{V}'_j;V_j)$ be two admissible representations of the same formal chain patch of $\Ff$. We argue by induction on $j$. For $j=1$, necessarily $V_1=V'_1$ so that $t_1=\tilde{t}_1$. Suppose inductively that $t=t_1\cj\cdots\cj t_{j-1}=\tilde{t}_1\cj\cdots\cj\tilde{t}_{j-1}=\tilde{t}$. We want to show that $t_j\sim_{V_j}\tilde{t}_j$ implies $t\cj t_j=\tilde{t}\cj \tilde{t}_j$.

By definition of $V_j$-equivalence there is a local section $u\in\Ff(W)$ such that $V_j\subset W$ and $u\leq t_j$ and $u\leq\tilde{t}_j$. Since $S_\Ff$ is a right-handed skew lattice and hence $(S_\Ff,\cj)$ a left regular band, we get $t\cj u\leq t\cj t_j$ and $t\cj u\leq t\cj \tilde{t}_j$. Now, the admissibility of the two formal patch representations implies that the $\Dd$-classes (i.e. domains of definition) of the latter three local sections are the same. It follows then from Proposition \ref{prp:basic}iv that $t\cj t_j=t\cj\tilde{t}_j$ as required.

(b) By Lemma~\ref{lma:chainrefine} there is a common chain refinement $(W_{ij})$ of $(U_i)$ and $(V_j)$. Because of the condition $(s_i,U'_i)\sim_{U_i\cap V_j}(t_j,V'_j)$ there is a local section $u_{ij}\in\Ff(W'_{ij})$ which is a common restriction of the local sections $s_i$ and $t_j$. Therefore we can assume without loss of generality that $(U_i)$ already chain refines $(V_j)$ and that for admissible representatives $(s_i,U'_i),(t_j,V'_j)$, an inclusion $U_i\subset V_j$ implies the inclusion $U'_i\subset V'_j$ as well as $s_i=(t_j)_{|U'_i}$. This translates into $s_i=s_i\cm t_j$ whenever $f(i)=j$, where $f$ is the choice function associated with the chain refinement.

Let $f^{-1}(j)=\{i_1,\dots,i_{m(j)}\}$. This is a subinterval of $\{1,\dots,m\}$, and for varying $j$ we get an \emph{ordered} partition of $\{1,\dots,m\}$ since $f$ is order-preserving and surjective. Therefore, we can reduce (b) to (a) by showing that for each $j$ we have $V_j$-equivalent local sections $(s_{i_1}\cj\cdots\cj s_{i_{m(j)}},U'_{i_1}\cup\cdots\cup U'_{i_{m(j)}})\sim_{V_j}(t_j,V'_j)$.

Using distributivity and right-handedness of $S_\Ff$ we get $$s_{i_1}\cj\cdots\cj s_{i_{m(j)}}=(s_{i_1}\cm t_j)\cj\cdots\cj(s_{i_{m(j)}}\cm t_j)=(s_{i_1}\cj\cdots\cj s_{i_{m(j)}})\cm t_j$$ whence $s_{i_1}\cj\cdots\cj s_{i_{m(j)}}\leq t_j$ which implies the required $V_j$-equivalence.\end{proof}

It is remarkable that the proof of Lemma \ref{lma:Talg} only uses part of the structure of right distributive skew lattice of $S_\Ff$, namely its right normality and the distributivity of the lattice reflection $S_\Ff/\Dd$, cf. Lemma \ref{lma:normal}. Just below we show that Lemma \ref{lma:Talg} implies the existence of a canonical action map $\xi_\Ff\colon\PM\Ff\to\Ff$. It is the naturality of $\xi_\Ff$ which requires the entire right distributive skew structure of $S_\Ff$.

\begin{prp}Let $\Ff$ be a global sheaf on a spectral space $\Xx$ such that the right distributive band $\el_{\KO(\Xx)}(\Ff)$ extends to a right distributive skew lattice $S_\Ff$.

Then the map $\xi_\Ff\colon \PM \Ff\to \Ff$ taking an admissibly represented formal chain patch  $\inlinebigveedot_{i=1}^m (s_i,U'_i;U_i)\in (\PM \Ff)(U_1\sqcup\cdots\sqcup U_m)$ to $s_1\cj\cdots\cj s_m\in \Ff(U_1\sqcup\cdots\sqcup U_m)$ defines a $\PM$-algebra structure on $\Ff$.\end{prp}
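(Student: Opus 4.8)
The plan is to verify the three conditions defining a $\PM$-algebra structure: that $\xi_\Ff$ is a well-defined morphism of sheaves $\PM\Ff\to\Ff$, that $\xi_\Ff\circ\eta_\Ff=\id_\Ff$, and that $\xi_\Ff\circ\PM\xi_\Ff=\xi_\Ff\circ\mu_\Ff$. First I would show $\xi_\Ff$ is well-defined on each set $(\PM\Ff)(U)$. Every element of $(\PM\Ff)(U)$ is represented by a formal patch over a constructible partition of $U$, which by Lemma~\ref{lma:diffchainfin} refines to a chain partition; hence every element admits a representing formal \emph{chain} patch, to which the join formula applies. By Lemma~\ref{lma:Talg}(a) the resulting join is independent of the admissible representatives, and, taking one chain patch to refine another, Lemma~\ref{lma:Talg}(b) shows the join is invariant under chain refinement. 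Since any two chain refinements of a given element share a common chain refinement (Lemma~\ref{lma:chainrefine}) and refinement of formal patches is transitive, the join depends only on the class in $(\PM\Ff)(U)$, so $\xi_\Ff$ descends to a well-defined map.

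Next I would check naturality, i.e. that $\xi_\Ff$ commutes with restriction along $V\subseteq U$. Restriction to $V$ intersects each cell of a chain patch with $V$ (empty cells being dropped) and restricts each leaf; in $S_\Ff$ this restriction is the assignment $s_i\mapsto b\cm s_i$ for any section $b$ over $V$. The left distributive law $b\cm(s_1\cj\cdots\cj s_m)=(b\cm s_1)\cj\cdots\cj(b\cm s_m)$ of Proposition~\ref{prp:distributive} --- the two-cell case being the commutative square of Proposition~\ref{prp:skewsheaf}(i) --- then gives $(s_1\cj\cdots\cj s_m)|_V=(s_1|_V)\cj\cdots\cj(s_m|_V)$, which is naturality. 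As the remark following Lemma~\ref{lma:Talg} anticipates, this is the one place requiring the full skew structure rather than mere right normality. The unit law is immediate: $\eta_\Ff(s,U)=(s,U;U)$ is a one-cell chain patch whose join is $s$, so $\xi_\Ff\circ\eta_\Ff=\id_\Ff$.

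The substantial step is associativity. Writing an element of $(\PM^2\Ff)(U)$ as $\inlinebigveedot_{i}(t_i,W_i;V_i)$ with inner chain patches $t_i=\inlinebigveedot_{j}(s_{ij},U_{ij};V_{ij})$, the multiplication $\mu_\Ff$ flattens it to $\inlinebigveedot_{ij}(s_{ij},U_{ij};V_{ij}\cap V_i)$, so $\xi_\Ff\circ\mu_\Ff$ is the join of all leaves over a chain refinement of the flattened partition $\{V_{ij}\cap V_i\}$. On the other side $\PM\xi_\Ff$ replaces each $t_i$ by $\xi_\Ff(t_i)=s_{i1}\cj\cdots\cj s_{in_i}$, and $\xi_\Ff$ then forms the outer join of these over a chain refinement of $\{V_i\}$. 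To compare them I would assemble a single chain refinement of $\{V_{ij}\cap V_i\}$ by ordering the cells $V_i$ as a chain and refining each by the inner chain of $t_i$. Associativity of $\cj$ regroups the join over this global chain as the outer join of the inner joins, while naturality identifies the restriction of each inner join with $\xi_\Ff(t_i)|_{V_i}$; hence both composites equal the join over the global chain and agree.

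The main obstacle is precisely this order bookkeeping: since $\cj$ is \emph{non-commutative}, regrouping the flat join into outer and inner joins is legitimate only after the global chain is arranged to traverse the cells of each $V_i$ consecutively in the inner order, so that the lexicographic order is realised by an admissible chain refinement. The invariance of the join under chain refinement and under change of admissible representatives --- the content of Lemma~\ref{lma:Talg} --- then ensures that none of the auxiliary chain or representative choices affects the outcome, so the associativity identity holds exactly. No distributivity or symmetry beyond that already used to construct $\xi_\Ff$ is needed for this final step.
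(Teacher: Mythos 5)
Your proposal is correct and follows essentially the same route as the paper: reduce to admissibly represented formal chain patches via Lemma~\ref{lma:diffchainfin}, obtain well-definedness from Lemma~\ref{lma:Talg} (comparing chain refinements of equivalent formal patches), derive naturality from the distributivity identity of Proposition~\ref{prp:skewsheaf}(i), and then check the unit and associativity constraints. The only difference is one of emphasis: the paper dismisses the unit and associativity laws as ``straightforward verifications'', whereas you spell out the associativity bookkeeping (realising the lexicographic flattening by an actual chain of compact opens and regrouping via associativity of $\cj$), which is a faithful expansion of what the paper leaves implicit.
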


\begin{proof}We first show that it is enough to specify the values of $\xi_F$ for admissibly represented formal chain patches. Indeed, by Lemma \ref{lma:diffchainfin}, for every formal patch $\inlinebigveedot_{k=1}^l (u_k,W'_k;W_k)$, the constructible partition $(W_k)$ admits a chain refinement $(U_i)$, inducing thus a formal chain patch $\inlinebigveedot_{i=1}^m (s_i,U'_i;U_i)$ with representatives $s_i=u_{f(i)}$ where $f$ is the choice function of the refinement. These representatives can be made admissible by suitable restrictions.

We next show that the values of $\xi_F$ do not depend on the choice of such formal chain patch refinements. Let $\inlinebigveedot_{i=1}^m (s_i,U'_i;U_i)$ and $\inlinebigveedot_{j=1}^n (t_j,V'_j;V_j)$ be two formal chain patches refining $\inlinebigveedot_{k=1}^l (t_j,W'_j;W_j)$. We have to show that the condition of Lemma~\ref{lma:Talg}(b) is satisfied. Indeed, if $U_i\cap V_j$ is non-empty then $U_i\cap V_j\subset W_k$ for a unique $k$. By construction, the local representatives $s_i\in\Ff(U'_i)$ and $t_j\in\Ff(V'_j)$ are restrictions of $u_k\in\Ff(W_k)$. We therefore get $(s_i,U')\sim_{U_i\cap V_j}(t_j,V'_j)$ as required.

Consequently, $\xi_\Ff\colon \PM \Ff\to \Ff$ is well-defined. Proposition~\ref{prp:skewsheaf} implies naturality of $\xi_\Ff$. Unit and associativity constraints of the $T$-algebra action map $\xi_\Ff$ are straightforward verifications.\end{proof}

\subsection{Proof of Theorem~\ref{thm:main1}}

The constructions of Sections~\ref{subsct:TtoDSL} and~\ref{subsct:DSLtoT} are functorial. In order to show that they are mutually quasi-inverse, it suffices to observe that, given a distributive band $(S,\cm)$ and its dual spectral sheaf $\Ff:(S/\Dd)^\op\to\Set$, the two constructions set up a bijection between global join operations on $(S,\cm)$ (turning the latter into a right distributive skew lattice) and global $\PM$-algebra structures on $\Ff$. It remains to be shown that the respective notions of morphism correspond to each other under this bijection, which is also straightforward.\qed

\subsection{Exact categories and monadic right adjoints}

Any adjunction $$L:\CC\leftrightarrows\DD:R$$ with left adjoint $L$ and right adjoint $R$ induces a monad $(RL,\eta, R\epsilon L)$ on $\CC$ where $\eta$ (resp. $\epsilon$) denotes unit (resp. counit) of the adjunction. We also have a canonical \emph{comparison functor} $k:\DD\to \CC^{RL}$ taking an object $D$ of $\DD$ to the $RL$-algebra $(RD,R\epsilon_D)$. The right adjoint $R$ is called \emph{monadic} if $k:\DD\to \CC^{RL}$ is an equivalence of categories so that $\DD$ may be identified with the category of $RL$-algebras on $\CC$.

We would like to establish a monadicity theorem involving the patch monad $T=\phi_*\phi^*$. There is a ``support-theoretical'' obstruction preventing the direct image functor $\phi_*:\Sh(\Xx^p)\to\Sh(\Xx)$ from being monadic, which forces us to restrict $\phi_*$ to the subcategory $\Sh_{s}(\Xx^p)\into\Sh(\Xx^p)$ consisting of sheaves with \emph{saturated support}. The latter category has not anymore all properties of a general category of sheaves (i.e. is not anymore a Grothendieck topos) but continues to be an \emph{exact} category.

The patch adjunction restricts to $\phi^*:\Sh(\Xx)\leftrightarrows\Sh_{s}(\Xx^p):\phi_*$ and we will show in Theorem~\ref{thm:main2} below that the restricted direct image functor is monadic and thus induces an equivalence of categories $k:\Sh_{s}(\Xx^p)\eqv\Sh(\Xx)^\PM$.

Two proofs of Theorem~\ref{thm:main2} will be given: an ``abstract'' proof based on Duskin's monadicity criterion \cite{D}, and a ``constructive'' proof by exhibiting a quasi-inverse $l:\Sh(\Xx)^\PM\eqv\Sh_{s}(\Xx^p)$ to $k$, see Definition~\ref{dfn:constructive} and Propositions \ref{prp:Gsheaf} and \ref{prp:Gsheaf2}.

A category is called \emph{exact} (in the sense of Barr \cite{Barr}) if all finite limits exist, equivalence relations are effective\footnote{this means that if $R\dto X$ represents an equivalence relation then its coequaliser $q:X\to X/R$ exists and $R\dto X$ is the kernel pair of $q$.}, and regular epimorphisms (i.e. coequalisers) are stable under pullback. Important examples of exact categories are:\begin{itemize}\item Abelian categories\item categories of sheaves on a topological space (or Grothendieck site)\item varieties of $T$-algebras for a monad $T$ on sets\end{itemize}In the first two examples all epimorphisms are regular, in the third example regular epimorphisms are those $T$-algebra maps whose underlying set mapping is surjective. The category of compact Hausdorff spaces and continuous maps is exact (since compact Hausdorff spaces are the algebras for the ultrafilter monad on sets) while the category of all topological spaces and continuous maps is not (because regular epimorphisms are not stable under pullback).

A functor between exact categories is called \emph{exact} if it preserves finite limits and regular epimorphisms. Since in an exact category, every equivalence relation is the kernel pair of its coequaliser, and every regular epimorphism is the coequaliser of its kernel pair, a finite limit preserving functor between exact categories is exact if and only if it preserves quotients of equivalence relations.

\begin{prp}\label{prp:monadic}Let $R$ be a right adjoint functor between exact categories. If $R$ reflects isomorphisms and preserves regular epimorphisms then $R$ is monadic.\end{prp}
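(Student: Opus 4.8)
The plan is to verify the hypotheses of Duskin's monadicity criterion \cite{D} for the comparison functor $k\colon\DD\to\CC^{RL}$ attached to the adjunction $L\dashv R$ and its induced monad $RL$ on $\CC$. In the form adapted to exact categories, this criterion asserts that a conservative right adjoint $R\colon\DD\to\CC$ is monadic as soon as $\DD$ has, and $R$ preserves, the coequalisers of those equivalence relations $d_0,d_1\colon E\rightrightarrows A$ in $\DD$ whose image $Rd_0,Rd_1\colon RE\rightrightarrows RA$ admits a split coequaliser in $\CC$ (the \emph{$R$-split}, or $R$-contractible, equivalence relations). The reason for restricting to equivalence relations, rather than to the reflexive $R$-split pairs occurring in Beck's theorem, is precisely what makes the exactness hypothesis usable: an exact category need not have coequalisers of arbitrary reflexive pairs, but it does have coequalisers of \emph{all} equivalence relations, since these are effective by definition.

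The two formal inputs are immediate. As a right adjoint, $R$ has a left adjoint and preserves all limits, in particular finite limits and hence kernel pairs; and $R$ is conservative by hypothesis. It therefore remains to establish existence and preservation of coequalisers of $R$-split equivalence relations, and here I would exploit exactness on both sides.

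For existence, let $d_0,d_1\colon E\rightrightarrows A$ be an equivalence relation in $\DD$. Since $\DD$ is exact this relation is effective: its coequaliser $q\colon A\to A/E$ exists in $\DD$ and $E\rightrightarrows A$ is the kernel pair of $q$. For preservation I would then argue as follows. The coequaliser $q$ is a regular epimorphism, and by hypothesis $R$ preserves regular epimorphisms, so $Rq\colon RA\to R(A/E)$ is a regular epimorphism in $\CC$. Because $R$ preserves the pullback computing the kernel pair, the pair $Rd_0,Rd_1\colon RE\rightrightarrows RA$ is exactly the kernel pair of $Rq$. Finally, in the exact category $\CC$ every regular epimorphism is the coequaliser of its own kernel pair; hence $Rq$ is the coequaliser of $Rd_0,Rd_1$. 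Thus $R$ preserves the coequaliser of every equivalence relation, a fortiori of every $R$-split one, and Duskin's criterion yields that $k$ is an equivalence, i.e.\ $R$ is monadic.

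The genuinely delicate point is not the verification above — which is short once set up — but the invocation of Duskin's criterion in the correct form, namely that it suffices to test coequalisers of \emph{equivalence relations} rather than of all reflexive $R$-split pairs. This reduction is what circumvents the failure of general reflexive coequalisers in exact categories; under the hood it rests on presenting each $RL$-algebra as the coequaliser of the kernel pair of its own structure map, a relation that is simultaneously effective and an absolute (split) coequaliser. I would therefore take care to cite Duskin's theorem precisely in this equivalence-relation form and to check that the class of pairs it requires is contained in the class handled above; the preservation computation with kernel pairs, regular epimorphisms and effectiveness is then the routine remainder.
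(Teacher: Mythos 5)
Your proof is correct and follows essentially the same route as the paper's: both verify Duskin's monadicity criterion, with the key step being that exactness of $\CC$ and $\DD$, preservation of kernel pairs (right adjointness), and preservation of regular epimorphisms together force $R$ to preserve quotients of equivalence relations, which exist by effectiveness. The only cosmetic difference is that the paper packages this (together with conservativity) as ``$R$ creates quotients of equivalence relations'' before citing Duskin, whereas you check the has-and-preserves form of the criterion directly and spell out inline the preservation argument that the paper delegates to its preceding remark on exact functors.
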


\begin{proof}Since $R$ is right adjoint, $R$ preserves limits, whence $R$ is exact. Since $R$ reflects isomorphisms, and $R$ preserves quotients of equivalence relations, $R$ also reflects them. This together with the effectiveness of equivalence relations implies that $R$ \emph{creates} quotients of equivalence relations. This in turn yields monadicity of $R$ according to Duskin \cite[Thm. 3.0]{D}, cf. also Barr-Wells \cite[Prop. 9.1.8]{BarrWells}.\end{proof}

Recall that a \emph{coreflective} subcategory is a full subcategory such that the inclusion has a right adjoint, cf. Remark~\ref{rmk:reflective} for the dual notion of a reflective subcategory.

\begin{prp}\label{prp:exactsubcat}Let $\CC$ be a \emph{coreflective} subcategory of an exact category $\DD$ such that the inclusion preserves finite limits, and for each regular epimorphism $f$ in $\DD$, the domain of $f$ belongs to $\CC$ if and only if the codomain of $f$ belongs to $\CC$.

Then $\CC$ is an exact category, and the inclusion $\CC\into\DD$ is an exact functor.\end{prp}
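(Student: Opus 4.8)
The plan is to exploit the coreflective structure together with exactness of $\DD$ in order to transport the two remaining Barr-exactness axioms from $\DD$ to $\CC$. Write $i\colon\CC\into\DD$ for the inclusion and $G\colon\DD\to\CC$ for its right adjoint. Since $\DD$ is finitely complete and $i$ is fully faithful, applying $G$ to finite limits in $\DD$ shows that $\CC$ is finitely complete; the hypothesis that $i$ preserves finite limits then forces these limits to be computed as in $\DD$, so that $\CC$ is \emph{closed under finite limits} inside $\DD$. I also record, once and for all, that $i$ is a left adjoint and hence preserves all colimits, in particular coequalisers. Granting that $\CC$ is exact, exactness of the inclusion is then automatic: $i$ preserves finite limits by hypothesis and regular epimorphisms as a left adjoint. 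Thus everything reduces to verifying, for $\CC$, effectiveness of equivalence relations and stability of regular epimorphisms under pullback.

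The linchpin will be the following descent principle: \emph{a morphism $p$ of $\CC$ is a regular epimorphism in $\CC$ if and only if $i(p)$ is a regular epimorphism in $\DD$}. One direction is immediate from $i$ preserving coequalisers. For the converse I would argue that in the exact category $\DD$ the map $i(p)\colon i(P)\to i(C)$ is the coequaliser of its kernel pair $K\dto i(P)$; this kernel pair is a finite limit of a diagram lying in $\CC$, hence lies in $\CC$ by closure under finite limits; its coequaliser \emph{computed in} $\CC$ exists once effectiveness is known, and is carried by $i$ to a coequaliser of the same kernel pair in $\DD$, so by uniqueness it is isomorphic to $i(p)$. Fully-faithfulness of $i$ then descends the isomorphism, exhibiting $p$ as a coequaliser in $\CC$.

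Accordingly I would establish effectiveness first, since it feeds the descent principle. Given an equivalence relation $R\dto A$ in $\CC$, its image $i(R)\dto i(A)$ is an equivalence relation in $\DD$, because the defining reflexivity, symmetry and transitivity conditions are finite-limit conditions preserved by $i$. Exactness of $\DD$ yields a coequaliser $q'\colon i(A)\onto Q'$ whose kernel pair is $i(R)\dto i(A)$; since $q'$ is a regular epimorphism whose domain $i(A)$ lies in $\CC$, the hypothesis forces $Q'\in\CC$. A routine universal-property argument, using fully-faithfulness of $i$, shows that the descended map $q\colon A\to Q$ is the coequaliser of $R\dto A$ in $\CC$, while closure under finite limits identifies its kernel pair with $R\dto A$; hence the relation is effective. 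With effectiveness and the descent principle in hand, stability is short: for a regular epimorphism $f$ and an arbitrary map $g$ in $\CC$ I form their pullback in $\CC$ (computed as in $\DD$); the descent principle makes $i(f)$ a regular epimorphism in $\DD$, exactness of $\DD$ makes the pulled-back projection a regular epimorphism in $\DD$, and the descent principle returns a regular epimorphism in $\CC$.

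The main obstacle is precisely the descent principle — showing that a $\CC$-morphism which is a regular epimorphism in the ambient $\DD$ is already one in $\CC$. This is the only point where the three hypotheses genuinely conspire: closure under finite limits keeps the kernel pair inside $\CC$, the regular-epimorphism hypothesis keeps the quotient inside $\CC$, and effectiveness in $\DD$ realises that quotient as a coequaliser which descends along the fully faithful, colimit-preserving inclusion. Once this is secured, both remaining exactness axioms for $\CC$, and the exactness of $i$, follow formally.
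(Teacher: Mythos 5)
Your proof is correct and follows essentially the same route as the paper's: coreflectivity supplies finite limits in $\CC$, the hypothesis on regular epimorphisms keeps quotients of equivalence relations inside $\CC$, left-adjointness of the inclusion transfers coequalisers to $\DD$, and exactness of $\DD$ yields pullback-stability. Your explicitly formulated ``descent principle'' (a $\CC$-morphism that is a regular epimorphism in $\DD$ is already one in $\CC$) is precisely the point the paper's proof leaves implicit in its final step, so your write-up is a more detailed rendering of the same argument.
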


\begin{proof}Since $\CC$ is a coreflective subcategory of $\DD$ and $\DD$ has all finite limits, $\CC$ also has all finite limits. By hypothesis, the latter are preserved by the inclusion. In particular, an equivalence relation in $\CC$ is also an equivalence relation in $\DD$ and hence possesses a quotient in $\DD$ which belongs to $\CC$ by the second hypothesis. As a left adjoint, the inclusion also preserves regular epimorphisms. The pullback of a regular epimorphism in $\CC$ yields thus a regular epimorphism in $\DD$ which (again by the second hypothesis) belongs to $\CC$ and is a regular epimorphism in $\CC$. Therefore, $\CC$ is an exact category, and the inclusion $\CC\into\DD$ is an exact functor.\end{proof}


\begin{thm}\label{thm:main2}For any spectral space $\Xx$, the restricted direct image functor $\phi_*:\Sh_{s}(\Xx^p)\to\Sh(\Xx)$ is monadic so that the category of sheaves with saturated support on $\,\Xx^p$ is equivalent to the category of $\,\PM$-algebraic sheaves on $\,\Xx$.

In particular, the category of $\,\PM$-algebras is an exact category.\end{thm}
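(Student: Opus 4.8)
The plan is to deduce Theorem~\ref{thm:main2} from the two categorical criteria established above: apply Proposition~\ref{prp:monadic} (a right adjoint between exact categories that reflects isomorphisms and preserves regular epimorphisms is monadic) to the restricted direct image functor $\phi_*\colon\Sh_{s}(\Xx^p)\to\Sh(\Xx)$, obtaining the exactness of its domain from Proposition~\ref{prp:exactsubcat}. Once monadicity is in hand, the comparison functor $k\colon\Sh_{s}(\Xx^p)\to\Sh(\Xx)^{\PM}$ is an equivalence of categories, and the final ``in particular'' clause is then immediate: $\Sh_{s}(\Xx^p)$ is exact, hence so is the equivalent category $\Sh(\Xx)^{\PM}$ of $\PM$-algebras.

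First I would show that $\Sh_{s}(\Xx^p)$ is an exact category by applying Proposition~\ref{prp:exactsubcat} to the inclusion $\Sh_{s}(\Xx^p)\into\Sh(\Xx^p)$, the ambient category being a Grothendieck topos and hence exact. Three points must be verified. The \emph{regular-epimorphism condition} is the easiest: a regular epimorphism $f\colon\Gg\onto\Hh$ in the topos $\Sh(\Xx^p)$ is stalkwise surjective, and since a surjection of sets has nonempty source exactly when it has nonempty target, one gets $\supp\Gg=\supp\Hh$; thus the domain has saturated support if and only if the codomain does. For \emph{finite limits} I would use that they are computed stalkwise in $\Sh(\Xx^p)$ and that the saturation of supports is stable under the relevant operations, so the inclusion preserves them. \emph{Coreflectivity} requires constructing a right adjoint to the inclusion, cutting a sheaf down to its maximal saturated-support part; its universal property rests on the elementary observation that every sheaf map is support-increasing (a map $\Gg'\to\Gg$ forces $\supp\Gg'\subseteq\supp\Gg$), so that maps out of a saturated-support sheaf detect only the upset part of the target's support.

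Next I would check the hypotheses of Proposition~\ref{prp:monadic} for $R=\phi_*$. The patch adjunction restricts to $\phi^*\colon\Sh(\Xx)\leftrightarrows\Sh_{s}(\Xx^p)\colon\phi_*$, since $\phi^*\Ff$ carries the same underlying support as $\Ff$, which is automatically an upset because the open subsets of $\Xx$ are precisely the open upsets of $\Xx^p$ for the specialisation order; thus $\phi_*$ is a right adjoint between exact categories. Preservation of regular epimorphisms reduces, as above, to preservation of stalkwise surjectivity under restriction to open upsets. The crucial remaining point is that $\phi_*$ \emph{reflects isomorphisms}: given $\alpha\colon\Gg\to\Hh$ in $\Sh_{s}(\Xx^p)$ with $\phi_*\alpha$ an isomorphism, I would verify that $\alpha$ is a stalkwise isomorphism at every $x\in\Xx^p$, using that — because both supports are upsets — the stalk at $x$ is already detected by the open-upset neighbourhoods through which $\phi_*$ records the sheaf.

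The main obstacle I anticipate is precisely this reflection-of-isomorphisms step, together with the construction of the coreflection: both hinge on the \emph{support-theoretic bookkeeping} that mediates between the spectral topology of $\Xx$ and the finer patch topology of $\Xx^p$ along the specialisation order. It is here that the restriction to \emph{saturated} support is indispensable, being exactly the condition that removes the obstruction to $\phi_*$ reflecting isomorphisms and that makes the coreflection land in the subcategory at all. Once the support calculus is in place, the remaining verifications on finite limits and regular epimorphisms are routine, and assembling Propositions~\ref{prp:exactsubcat} and~\ref{prp:monadic} yields monadicity, hence the equivalence $\Sh_{s}(\Xx^p)\eqv\Sh(\Xx)^{\PM}$ and the exactness of the category of $\PM$-algebras.
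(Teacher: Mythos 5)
Your plan has the same skeleton as the paper's proof (coreflectivity of $\Sh_s(\Xx^p)$ inside $\Sh(\Xx^p)$, exactness via Proposition \ref{prp:exactsubcat}, monadicity via Proposition \ref{prp:monadic}), but the verifications you sketch for the two hard hypotheses are not merely incomplete: they rest on false claims. For preservation of finite limits you argue that limits are computed stalkwise and that ``saturation of supports is stable under the relevant operations''. This fails for pullbacks. Take $\Xx$ to be the Sierpinski space $\{0,1\}$ with $0\le 1$, so that $\Xx^p$ is discrete and a sheaf on $\Xx^p$ is a pair of stalks $(\Gg_0,\Gg_1)$, saturation meaning $\Gg_0\neq\emptyset\Rightarrow\Gg_1\neq\emptyset$. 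The subsheaves $(\{*\},\{a\})$ and $(\{*\},\{b\})$ of $(\{*\},\{a,b\})$ have full, hence saturated, support, yet their pullback in $\Sh(\Xx^p)$ is $(\{*\},\emptyset)$, whose support $\{0\}$ is not an upset. So stalkwise bookkeeping cannot establish this hypothesis; it is exactly the delicate point of the whole argument (the paper devotes an epi/mono decomposition of pullbacks to it, and the same example shows that even there the assertion about supports of intersections of subsheaves has to be handled with care).

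The decisive gap, however, is reflection of isomorphisms. Your mechanism --- that for a saturated sheaf ``the stalk at $x$ is already detected by the open-upset neighbourhoods'' --- is false: stalks on $\Xx^p$ are colimits over \emph{patch} neighbourhoods. In the example above, the patch stalk of $\Gg$ at the closed point $0$ is $\Gg_0$, while the only open upset containing $0$ is the whole space, whose section set is $\Gg_0\times\Gg_1$. The paper's actual argument is of a completely different nature: $\phi_*$ reflects isomorphisms if and only if each counit $\phi^*\phi_*\Gg\to\Gg$ is a regular epimorphism in $\Sh_s(\Xx^p)$, and this epimorphy is proved by a topological extension argument --- a section over a basic patch-open $U\cap V$ (with $U$ a clopen downset and $V$ a clopen upset contained in $\supp(\Gg)$) is glued with a section over the clopen complement $V\setminus(U\cap V)$, whose existence is precisely Lemma \ref{lma:g=g}; this is where saturation is genuinely used, and your sketch never touches this lemma. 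Similarly, preservation of regular epimorphisms by $\phi_*$ is not ``stalkwise surjectivity restricted to upsets'': one must convert patch-local liftings of a section over a compact open $V$ of $\Xx$ into a single lifting over $V$, which requires compactness of $V$, passage to a finite clopen partition, and gluing. Without these three arguments the proposal is the formal shell of the proof with its mathematical content missing.
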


\begin{proof}The support of a spectral sheaf is an upset for the specialisation order (i.e. \emph{saturated}) and the inverse image functor $\phi^*:\Sh(\Xx)\to\Sh(\Xx^p)$ preserves supports. Therefore $\phi^*$ factors through $\Sh_{s}(\Xx^p)$ and we get a restricted adjunction $$\phi^*:\Sh(\Xx)\lra\Sh_{s}(\Xx^p):\phi_*.$$

We first show that $\Sh_{s}(\Xx^p)$ is a \emph{coreflective} subcategory of $\Sh(\Xx^p)$. The support of a sheaf can be defined using the epi/mono factorisation and the bijection between open subsets of $\Xx^p$ and subobjects of the terminal sheaf $\one$. Indeed, under this bijection, the support of a sheaf $\Gg$ may be identified with the image of the unique map $\Gg\to\one$, i.e. $\Gg\onto\supp(\Gg)\into\one$. The subcategory $\Sh_s(\Xx^p)$ consists of those sheaves whose support is an open upset for the specialisation order of $\Xx$. This condition on the support of $\Gg$ amounts to requiring that the ``support-counit'' $\phi^*\phi_*\supp(\Gg)\to\supp(\Gg)$ is invertible. For a general sheaf $\Gg$, the coreflection into $\Sh_s(\Xx^p)$ is then defined by restricting $\Gg$ to the maximal open upset of its support. Formally, this is the pullback of $\Gg\onto\supp(\Gg)$ along the support-counit $\phi^*\phi_*\supp(\Gg)\to\supp(\Gg)$. The universal property of the coreflection follows from the fact that $\phi^*\phi_*$ is idempotent on subobjects of terminal objects.

We shall now apply Proposition~\ref{prp:exactsubcat} in order to show that $\Sh_s(\Xx^p)$ is an \emph{exact} subcategory of $\Sh(\Xx^p)$. For any epimorphism in $\Sh(\Xx^p)$ domain and codomain have the same support so that the second hypothesis of Proposition~\ref{prp:exactsubcat} is satisfied. In order to show that the inclusion $\Sh_s(\Xx^p)\into\Sh(\Xx^p)$ preserves finite limits, it is enough to show that it preserves terminal object and pullbacks.

The terminal sheaf on $\Xx^p$ has saturated support and belongs thus to $\Sh_s(\Xx^p)$. Since the epi/mono factorisation of $\Sh(\Xx^p)$ restricts to $\Sh_s(\Xx^p)$, a pullback in $\Sh_s(\Xx^p)$ can be decomposed into pullbacks having either both legs monomorphic or at least one leg epimorphic. In the latter case, the second already established hypothesis of Proposition~\ref{prp:exactsubcat} implies that these pullbacks are preserved by the inclusion; in the former case, the pullbacks are preserved as well because the support of an intersection of two subsheaves equals the intersection of the individual supports, and the intersection of two open upsets is again an open upset.

We are now able to apply Proposition~\ref{prp:monadic} to the restricted adjunction. By a general result, the right adjoint $\phi_*$ reflects isomorphisms if and only if the counit $\phi^*\phi_*\Gg\to \Gg$ is a regular epimorphism in $\Sh_s(\Xx^p)$ for all sheaves $\Gg$ with saturated support. A morphism of set-valued sheaves $f:\Gg'\to \Gg$ is a regular epimorphism if and only if it an epimorphism if ad only if the induced morphisms on stalks $f_x:\Gg'_x\to \Gg_x$ are surjective, i.e. for each $x\in\Xx^p$, each open neighborhood $V$ of $x$, and each local section $t\in \Gg(V)$, there is an open neighborhood $U\subset V$ of $x$ and a local section $s\in \Gg'(U)$ such that $f_U(s)=t|_U$.

Since $\Gg'=\phi^*\phi_*\Gg$ and $f$ is the counit, this amounts to showing that each section of $\Gg$ over a sufficiently small open neighborhood of $x$ extends to a section over a saturated open neighborhood. For this, note that the topology of $\Xx^p$ is generated by the collection of intersections $U\cap V$ where $U$ is a clopen downset and $V$ is a clopen upset. Since $x\in\supp(\Gg)$ we can assume without loss of generality that $x\in U\cap V$ and $V\subset\supp(\Gg)$. Now, $(U\cap V)\sqcup(V- U\cap V)$ is a clopen partition of $V$, and $\Gg(V -U\cap V)$ is non-empty, cf. Lemma~\ref{lma:g=g}. The gluing property of the sheaf $\Gg$ implies then that every section over $U\cap V$ extends to a section over $V$.

For the second hypothesis of Proposition~\ref{prp:monadic}, let us assume that $f:\Gg'\to \Gg$ is a regular epimorphism in $\Sh_s(\Xx^p)$. We have to show that $\phi_*f:\phi_*\Gg'\to\phi_*\Gg$ is an epimorphism in $\Sh(\Xx)$. Let $U$ be an open neighborhood of a point $x$ in the spectral space $\Xx^p$ and $t\in (f_*\Gg)(V)$ a local section. Since the topology of the spectral space $\Xx$ is generated by compact open subsets we can assume that $V$ is compact open in $\Xx$. Then, for each $y\in V$, there exists a clopen neighborhood $U_y$ in $\Xx^p$ and a section $s_y\in \Gg'(U_y)$ such that $f_{U_y}(s_y)=t|_{U_y}$ because $f:\Gg'\to \Gg$ is an epimorphism in $\Sh(\Xx^p)$. Since $V$ is compact, there exists a finite number of $U_y$'s covering $V$. Take all non-empty finite intersections of these finitely many $U_y$'s: they form a \emph{clopen partition} of $V$ in $\Xx^p$ such that over each individual member there is a local section of $\Gg'$ mapped to the corresponding restriction of $t$ by $f$. The gluing property of the sheaf $\Gg'$ ensures the existence of a section $s\in(\phi_*\Gg')(V)$ such that $(\phi_*f)_V(s)=t$, whence $\phi_*f$ is an epimorphism in $\Sh(\Xx)$.\end{proof}

\begin{exm}Let us illustrate Theorem~\ref{thm:main2} for the Sierpinski spectral space $\Xx=\{0,1\}$ consisting of two points with $\{0\}$ closed and $\{1\}$ open. The associated Priestley space $\Xx^p$ is the discrete space $\{0,1\}$ equipped with the natural order.

A sheaf $\Gg$ on the discrete Priestley space $\Xx^p$ is determined by its stalks $(\Gg_0,\Gg_1)$ and has saturated support whenever $\Gg_0\not=\emptyset$ implies $\Gg_1\not=\emptyset$.

A sheaf $\Ff$ on the Sierpinski space $\Xx$ is given by its set of global sections $\Ff(\{0,1\})$ together with a restriction map $\Ff(\{0,1\})\to \Ff(\{1\})$. Such a sheaf is a $T$-algebra if and only if it has a skew distributive lattice of local sections, cf. Theorem~\ref{thm:main1}, if and only if the set $\Ff(\{0,1\})$ of global sections may be identified (over $\Ff(\{1\})$) with the product $\Ff(\{1\})\times\Ff_{\{0\}}(\{0,1\})$, cf. Proposition \ref{prp:skewsheaf}iii.

The comparison functor $k:\Sh_s(\Xx^p)\to\Sh(\Xx)^T$ induces an equivalence from the former structure to the latter structure.\end{exm}

\begin{lma}\label{lma:g=g}Any globally supported sheaf on a Boolean space has global sections.\end{lma}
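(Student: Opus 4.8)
The plan is to exploit the zero-dimensionality of a Boolean space in order to reduce the existence of a global section to a \emph{trivial} gluing over a clopen \emph{partition}, where the compatibility condition is automatically satisfied.

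First I would unpack the hypothesis. Recall from the proof of Theorem~\ref{thm:main2} that the support of $\Gg$ is the image of the unique map $\Gg\to\one$, hence the open subset $\{x\in\Xx^p\mid\Gg_x\neq\emptyset\}$ of points with non-empty stalk, where $\Gg_x=\colim_{U\ni x}\Gg(U)$. Thus \emph{globally supported} means $\Gg_x\neq\emptyset$ for every $x\in\Xx^p$. Since in a Boolean space the clopen subsets form a basis of the topology, for each $x$ there is a clopen neighbourhood $U_x\ni x$ together with a local section $s_x\in\Gg(U_x)$: indeed, non-emptiness of the stalk yields some open $U$ with $x\in U$ and $\Gg(U)\neq\emptyset$, and restricting along a clopen $U_x\subseteq U$ containing $x$ gives $\Gg(U_x)\neq\emptyset$.

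Next I would use compactness of $\Xx^p$ to extract from the clopen cover $\{U_x\}$ a finite subcover $U_{x_1},\dots,U_{x_n}$ of $\Xx^p$. The key manoeuvre is then to replace this cover by a clopen \emph{partition} refining it: setting $V_1=U_{x_1}$ and $V_k=U_{x_k}\setminus(U_{x_1}\cup\cdots\cup U_{x_{k-1}})$ for $k>1$ produces pairwise disjoint clopen subsets (clopens being closed under finite Boolean operations) whose union is $\Xx^p$, with each $V_k\subseteq U_{x_k}$. Restricting $s_{x_k}$ to $V_k$ yields local sections $t_k\in\Gg(V_k)$.

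Finally I would glue. Because the $V_k$ are pairwise disjoint, every overlap $V_k\cap V_l$ with $k\neq l$ is empty and $\Gg(\emptyset)$ is a singleton, so the family $(t_k)$ is automatically compatible. The sheaf condition then produces a unique $s\in\Gg(\bigcup_k V_k)=\Gg(\Xx^p)$ restricting to each $t_k$, whence $\Gg$ has a global section. The only delicate point is the passage from a cover to a partition, and this is precisely where the Boolean (zero-dimensional) structure is indispensable: over a general spectral space the sections $s_{x_i}$ need not agree on overlaps and cannot be forced to, which is exactly the obstruction that the passage to the finer patch topology is designed to remove.
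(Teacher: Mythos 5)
Your proof is correct and follows the same route as the paper: the paper's own (two-line) proof simply asserts the existence of a clopen partition of the space supporting local sections and glues over it, which is exactly what you establish in detail via non-empty stalks, the clopen basis, compactness, and disjointification. The extra steps you supply are precisely the ones the paper leaves implicit, so this is a filled-in version of the same argument rather than a different one.
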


\begin{proof}By hypothesis there is a partition of the whole space into clopen subsets supporting local sections. The latter glue to give a global section.\end{proof}

\begin{thm}[\cite{BCGGK}]\label{thm:main3}The category of globally supported sheaves on Priestley spaces is dually equivalent to the category of right distributive skew lattices.\end{thm}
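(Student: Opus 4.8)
The plan is to combine Theorems~\ref{thm:main1} and~\ref{thm:main2} fibrewise over a fixed spectral space, and then incorporate base change via Stone duality exactly as in the proof of Theorem~\ref{thm:main0} to obtain the asserted dual equivalence over all Priestley spaces. Recall first that, by Section~\ref{subsct:patch}, the category of Priestley spaces and order-preserving continuous maps is isomorphic to the category of spectral spaces and spectral maps, the Priestley space attached to a spectral space $\Xx$ being its patch space $\Xx^p$. Thus a globally supported sheaf on a Priestley space is the same datum as a globally supported sheaf on $\Xx^p$ for some spectral space $\Xx$, and it suffices to describe these together with their morphisms.

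The first step is to identify, for fixed $\Xx$, the globally supported sheaves on $\Xx^p$ with the global $\PM$-algebras on $\Xx$. Since the whole space is an upset, every globally supported sheaf has saturated support, so globally supported sheaves form a full subcategory of $\Sh_{s}(\Xx^p)$. Under the monadic equivalence $k\colon\Sh_{s}(\Xx^p)\eqv\Sh(\Xx)^\PM$ of Theorem~\ref{thm:main2}, a sheaf $\Gg$ with saturated support is sent to $\phi_*\Gg$, and $(\phi_*\Gg)(\Xx)=\Gg(\Xx)$. I would then observe that $\phi_*\Gg$ is global, i.e. $\Gg(\Xx)\neq\emptyset$, precisely when $\Gg$ is globally supported: a global section forces the support to be the whole space, and conversely Lemma~\ref{lma:g=g} produces a global section out of global support, since $\Xx^p$ is Boolean. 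Hence $k$ restricts to an equivalence between the globally supported sheaves on $\Xx^p$ and the global $\PM$-algebras on $\Xx$. Composing with Theorem~\ref{thm:main1}, which identifies global $\PM$-algebras on $\Xx$ with right distributive skew lattices over $\KO(\Xx)$, yields for each fixed $\Xx$ an equivalence between globally supported sheaves on $\Xx^p$ and right distributive skew lattices whose lattice reflection is $\KO(\Xx)$. As $\Xx$ ranges over all spectral spaces, $\KO(\Xx)$ ranges over all bounded distributive lattices by Stone duality, so every right distributive skew lattice $S$ arises in this way, its lattice reflection $S/\Dd$ being the relevant dual lattice.

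It remains to upgrade these fibrewise equivalences to a single dual equivalence over all Priestley spaces, and this is where the one genuinely global argument is needed. Here I would follow the morphism part of the proof of Theorem~\ref{thm:main0} verbatim: a morphism of right distributive skew lattices $S\to S'$ induces, by passing to lattice reflections, a homomorphism of bounded distributive lattices $S/\Dd\to S'/\Dd$, hence by Stone duality a spectral map $\Spec(S'/\Dd)\to\Spec(S/\Dd)$ and thus an order-preserving continuous map of the associated Priestley spaces in the opposite direction; the skew-lattice morphism then supplies the accompanying map on local sections, natural with respect to restrictions. Verifying that the two resulting functors are mutually quasi-inverse is the same bookkeeping as in Theorem~\ref{thm:main0}, now carried along the patch construction.

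The main obstacle I anticipate is precisely this last compatibility: one must check that the support condition is preserved under base change, and that the fibrewise equivalences of the second paragraph assemble coherently, i.e. that the quasi-inverse functors constructed over each fixed $\Xx$ are natural in $\Xx$. Once this is established, the contravariant direction furnished by Stone duality turns the covariant fibrewise equivalences into the claimed dual equivalence between globally supported sheaves on Priestley spaces and right distributive skew lattices.
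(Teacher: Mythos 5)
Your proposal is correct and takes essentially the same route as the paper: the paper's proof is a one-line combination of Lemma~\ref{lma:g=g}, Theorems~\ref{thm:main1} and~\ref{thm:main2}, and Priestley duality, and your argument simply fills in the details that combination leaves implicit (identifying globally supported sheaves with global sheaves via Lemma~\ref{lma:g=g}, restricting the monadic equivalence of Theorem~\ref{thm:main2} to these, composing with Theorem~\ref{thm:main1}, and handling morphisms by Stone/Priestley duality as in Theorem~\ref{thm:main0}). The one point you flag as an anticipated obstacle, the coherence of the fibrewise equivalences under base change, is exactly what the paper also leaves to the reader, so no gap separates your argument from theirs.
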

\begin{proof}This follows by combining the preceding lemma with Theorems~\ref{thm:main1},~\ref{thm:main2} and Priestley duality \cite{P}.\end{proof}

\subsection{A constructive proof of Theorem~\ref{thm:main2}}\label{subsct:constructive}--\vspace{1ex}

We fix a sheaf $\Ff\colon \KO(\Xx)^\op\to\Set$ on a spectral space $\Xx$ and a $\PM$-algebra structure $\xi_\Ff\colon \PM \Ff\to \Ff$.
Recall that the category of elements $\el_{\KO(\Xx)}(\Ff)$ is a \emph{right distributive skew lattice} by Proposition~\ref{prp:Tskew}. This key property allows us to simplify notation with respect to Section~\ref{sct:patchmonad}. Our aim is to construct an explicit functor $l:\Sh(\Xx)^\PM \to\Sh_s(\Xx^p)$ quasi-inverse to the comparison functor $k:\Sh_s(\Xx^p)\to\Sh(\Xx)^\PM$, cf. Propositions~\ref{prp:Gsheaf} and~\ref{prp:Gsheaf2} below. This proves the main part of Theorem~\ref{thm:main2} with a minimal amount of point-set topology. 

The Boolean lattice $\KO(\Xx^p)$ is the Boolean envelope $\KO(\Xx)_B$ of the dual lattice $\KO(\Xx)$ and consists of the constructible subsets of $\Xx$, resp. the clopen subsets of $\Xx^p$, cf. Section \ref{subsct:patch}. Our task is thus to extend the sheaf $\Ff$ on $\KO(\Xx)$ to constructible subsets of $\Xx$ by using the skew distributive lattice structure of $(S_\Ff,\cm,\cj)=(\el_{\KO(\Xx)}(\Ff),\cm,\cj)$. To this purpose we introduce, for each constructible subset $W$ of $\Xx$, an equivalence relation $\sim_W$ on local sections $s',s''$ of $\Ff$ over compact open subsets $U',U''\supset W$, expressing that $s'$ and $s''$ agree on $W$ compatibly with the skew join $\cj$.

Let $V\subset U$ be a pair of compact open subsets. We say that $(s',U),(s'',U)$ \emph{agree away from} $V$ if for all $(t,V)\in S_\Ff$ we have $t\cj s'=t\cj s''$. It amounts to the same to require that $s',s''$ are equalised by the quotient map $\Ff(U)\to \Ff_{U\backslash V}(U)$ of Proposition~\ref{prp:skewsheaf}iii. We say that $(s',U'),(s'',U'')$ are \emph{$W$-equivalent} if there exists $U\in\KO(\Xx)$ such that $W\subset U\subset U'\cap U''$ and for all $V$ with $W\subset U\backslash V$, the restrictions $s'_{|U}$ and $s''_{|U}$ agree away from $V$. This is clearly an equivalence relation which shall be denoted by $(s',U')\sim_W(s'',U'')$.

\begin{dfn}\label{dfn:constructive}For each clopen subset $W$ of $\,\Xx^p$ we set$$(l\Ff)(W)=\left(\coprod_{U\in\KO(\Xx),U\supset W}\Ff(U)\right)/\sim_W.$$\end{dfn}

\begin{prp}\label{prp:Gsheaf}$l\Ff$ defines a sheaf on $\KO(\Xx^p)$ such that $\Ff\cong kl\Ff$ as $T$-algebras. In particular, the stalks $(l\Ff)_x, x\in\Xx^p,$ are canonically isomorphic to the original stalks $\Ff_x,x\in\Xx,$ so that the support of $\,l\Ff$ is an upset.\end{prp}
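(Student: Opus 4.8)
The plan is to verify the three assertions in turn: that $l\Ff$ is a sheaf on the Boolean algebra $\KO(\Xx^p)$, that $kl\Ff\cong\Ff$ as $\PM$-algebras, and that the stalks of $l\Ff$ coincide with those of $\Ff$. First I would check that $l\Ff$ is a \emph{presheaf}: for clopen $W'\subset W$ the same pairs represent sections over both $W$ and $W'$ (since $U\supset W\supset W'$), and the assignment descends to a restriction map $(l\Ff)(W)\to(l\Ff)(W')$ because $\sim_W$ refines $\sim_{W'}$. This refinement is read off from the product decomposition $\Ff(U)\cong\Ff(V)\times\Ff_{U\backslash V}(U)$ of Proposition~\ref{prp:skewsheaf}iii, which translates ``agreement away from $V$'' into equality of the relevant component, so that agreement on $W$ entails agreement on the smaller $W'$. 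Functoriality of restriction is then immediate.

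The central step is the \emph{sheaf condition}. Since $\Xx^p$ is a Boolean space, $\KO(\Xx^p)$ is a Boolean algebra in which every cover has a finite subcover refining to a finite clopen \emph{partition}; for a partition the compatibility hypothesis is vacuous, so it suffices to prove that for each clopen partition $W=W_1\sqcup\dots\sqcup W_n$ the map $(l\Ff)(W)\to\prod_{i}(l\Ff)(W_i)$ is a bijection. For \emph{surjectivity} I would start from classes $[(s_i,U_i)]_{W_i}$, refine $(W_i)$ to a chain partition via Lemma~\ref{lma:diffchainfin}, pass to admissible representatives, and form the iterated skew join $s_1\cj\dots\cj s_n$ through the algebra map $\xi_\Ff$. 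Lemma~\ref{lma:Talg} guarantees that this join is independent of the admissible representatives and of the chosen chain refinement, and by construction it restricts to the prescribed class on each part. For \emph{injectivity} one shows that two sections over $W$ agreeing on each $W_i$ already agree on $W$; representing both by chain patches over a common chain refinement (Lemma~\ref{lma:chainrefine}) and invoking Lemma~\ref{lma:Talg}(b) yields the claim.

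I would then establish the isomorphism $kl\Ff\cong\Ff$ of $\PM$-algebras. As $\phi_*$ is restriction to the compact opens of $\Xx$, which are precisely the clopen upsets of $\Xx^p$, it suffices to compare $(l\Ff)(U)$ with $\Ff(U)$ for $U\in\KO(\Xx)$. The canonical map $\Ff(U)\to(l\Ff)(U)$, $s\mapsto[(s,U)]_U$, is surjective because every $(s',U')$ with $U'\supset U$ is $U$-equivalent to $(s'|_U,U)$, and injective because on sections over $U$ the relation $\sim_U$ collapses to equality (take the witnessing compact open to be $U$, so that the complementary component of Proposition~\ref{prp:skewsheaf}iii is trivial). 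These bijections are natural in $U$ and hence give a presheaf isomorphism $\Ff\eqv\phi_*l\Ff$, which is one of sheaves by Proposition~\ref{prp:sheavesDL}. That it respects the $\PM$-algebra structures follows from Theorem~\ref{thm:main1}: it is enough to match the two skew joins, and the join on $S_{\phi_*l\Ff}$ is exactly the disjoint gluing in $l\Ff$, which by construction reproduces the join built from $\xi_\Ff$ on $S_\Ff$.

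Finally, for each $x\in\Xx^p$ the stalk $(l\Ff)_x=\colim_{W\ni x}(l\Ff)(W)$ is computed over clopen neighborhoods $W$ of $x$; a cofinality argument, using that each $(l\Ff)(W)$ is a quotient of $\Ff({\uparrow}W)$ and that the extra identifications in $\sim_W$ disappear in the colimit, identifies it with $\colim_{U\ni x}\Ff(U)=\Ff_x$. Since $\phi$ is the identity on points and $\supp(\Ff)$ is open in $\Xx$, hence an open upset of $\Xx^p$, this coincidence of stalks gives $\supp(l\Ff)=\supp(\Ff)$ as an upset, so $l\Ff\in\Sh_s(\Xx^p)$ as required. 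I expect the sheaf condition, and within it the surjectivity of the gluing map, to be the main obstacle: this is the only place where the \emph{full} right distributive skew lattice structure of $S_\Ff$ intervenes, precisely through the well-definedness of the iterated join supplied by Lemma~\ref{lma:Talg}, whereas the remaining verifications are essentially bookkeeping once the equivalence $\sim_W$ is correctly in place.
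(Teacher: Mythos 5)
Your overall skeleton is the paper's: presheaf functoriality of $\sim_W$, the sheaf condition checked on clopen partitions, the isomorphism $\Ff\cong kl\Ff$ from discreteness of $\sim_U$ for compact open $U$, and the $\PM$-algebra compatibility by matching skew joins. The last two steps are carried out correctly and essentially as in the paper. The genuine gap is in the sheaf condition, which is where the real content of the proposition lies. The machinery you invoke there --- chain partitions (Lemma~\ref{lma:diffchainfin}), common chain refinements (Lemma~\ref{lma:chainrefine}), admissible representatives and Lemma~\ref{lma:Talg} --- is defined only for partitions of \emph{compact open} subsets of $\Xx$, i.e.\ for elements of $(\PM\Ff)(U)$ with $U\in\KO(\Xx)$: admissibility demands $U'_1\cup\dots\cup U'_i=U_1\sqcup\dots\sqcup U_i$, whose left-hand side is compact open, so admissible representatives can only exist when the partial unions of the pieces are compact open, as they are for a chain partition of some $U\in\KO(\Xx)$. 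A clopen $W\in\KO(\Xx^p)$ is in general not open in $\Xx$, its partitions have merely constructible partial unions, so admissible representatives do not exist and there is no element of $\PM\Ff$ to which $\xi_\Ff$ could be applied. Moreover, even over a compact open ambient set, the iterated join does not do what you claim: since $(s_i,U_i)\cj(s_j,U_j)$ agrees with $s_i$ on all of $U_i$, the join $s_1\cj\dots\cj s_n$ follows $s_i$ on $U_i\cap W_j$ for $i<j$, and this set can be nonempty; shrinking domains cannot remove it, because every compact open containing $W_j$ contains ${\uparrow}W_j$, which meets $W_i$ whenever a point of $W_i$ lies above a point of $W_j$. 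Hence the restriction of your join to $W_j$ need not represent the prescribed class $[(s_j,U_j)]_{W_j}$. The paper's proof addresses exactly this point: it reduces to $W_1=U_1-V_1$, $W_2=U_2-V_2$ and first corrects the representatives, setting $\tilde s_1=(s_2)|_{V_1\cap U_2}\cj s_1$ and $\tilde s_2=(s_1)|_{V_2\cap U_1}\cj s_2$, so that they agree on $U_1\cap U_2$ away from $V_1\cap V_2$, and only then forms the join. This overlap correction is the key idea missing from your proposal; admissibility constrains only the domains of representatives, never their values on overlaps.

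The injectivity half and the stalk claim have related problems. Lemma~\ref{lma:Talg}(b) concludes an \emph{equality} of joins in $S_\Ff$, i.e.\ of actual sections over compact opens, whereas injectivity asks you to establish the weaker relation $(s,U)\sim_W(s',U')$ between two single sections, which are not formal patches; there is nothing for that lemma to apply to. The paper instead argues directly from the definition: a compact open $V$ avoiding $W$ avoids every $W_i$, so each $W_i$-equivalence already yields agreement away from $V$. Finally, your cofinality argument for the stalks is not correct as stated: the identifications made by $\sim_W$ do \emph{not} disappear in the colimit over clopen neighbourhoods, since these are strictly smaller than the compact open ones. Already for the Sierpinski space (the paper's example following Theorem~\ref{thm:main2}) one finds $(l\Ff)_0=(l\Ff)(\{0\})\cong\Ff_{\{0\}}(\{0,1\})$, a genuine quotient of $\Ff_0=\Ff(\{0,1\})$ whenever $\Ff(\{1\})$ has more than one element. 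What the saturation claim actually requires is only that nonemptiness of stalks is preserved, i.e.\ $\supp(l\Ff)=\supp(\Ff)$; this does follow, as in the paper, from the isomorphism $\Ff\cong\phi_*l\Ff$ together with the observation that every class in $(l\Ff)(W)$ is represented by a section of $\Ff$ over some compact open $U\supset W$.
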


\begin{proof}For $W'\subset W$ the $W$-equivalence relation refines the $W'$-equivalence relation so that $l\Ff$ is a presheaf on $\KO(\Xx^p)$. Since $\KO(\Xx^p)$ is a Boolean lattice, in order to see that $l\Ff$ is a sheaf, it suffices to show that for $W,W_1,W_2\in\KO(\Xx^p)$ with $W=W_1\cup W_2$ and $W_1\cap W_2=\emptyset$, we have a bijection$$|^W_{W_1}\times|^W_{W_2}:(l\Ff)(W)\overset{\cong}{\longrightarrow}(l\Ff)(W_1)\times(l\Ff)(W_2).$$

For the injectivity, let $(s_1,U_1),(s_2,U_2)\in S_\Ff$ such that $W\subset U_1\cap U_2$, and suppose that $(s_1,U_1)\sim_{W_1}(s_2,U_2)$ and $(s_1,U_1)\sim_{W_2}(s_2,U_2)$. Without restricting generality we can assume $U_1=U_2=U$. Let $V\subset U$ be a compact open subset avoiding $W$, and hence avoiding $W_1$ and $W_2$. By definition of the $W_i$-equivalence relations, $s_1$ and $s_2$ agree away from $V$ so that they are $W$-equivalent, proving injectivity.

For the surjectivity, let $(s_1,U_1),(s_2,U_2)\in S_\Ff$ such that $W_1\subset U_1$ and $W_2\subset U_2$. Any constructible subset of $\Xx$ is a disjoint union of finitely many differences $U-V$ for compact open subsets $V\subset U$ of $\Xx$. We can thus inductively assume that $W_1=U_1-V_1$ and $W_2=U_2-V_2$.  Let us now first suppose that ${s_1}_{|U_1\cap U_2}$ and ${s_2}_{|U_1\cap U_2}$ agree away from $V_1\cap V_2$. Then $(s_1\cj s_2,U_1\cup U_2)$ defines a $W$-equivalence class restricting to the $W_1$-equivalence class of $(s_1,U_1)$ and to the $W_2$-equivalence class of $(s_2,U_2)$, because $s_1\cj s_2$ restricts to $s_1$ on $U_1$ and to a local section on $U_2$ which agrees with $s_2$ away from $V_2$.

The general case reduces to this special case by putting $\tilde{s}_1=(s_2)_{|V_1\cap U_2}\cj s_1$ and $\tilde{s}_2=(s_1)_{|V_2\cap U_1}\cj s_2$. Indeed, $\tilde{s}_1$ and $\tilde{s}_2$ agree then on $U_1\cap U_2$ away from $V_1\cap V_2$, and we have $(s_1,U_1)\sim_{W_1}(\tilde{s}_1,U_1)$ and $(s_2,U_2)\sim_{W_2}(\tilde{s}_2,U_2)$.

We get thus a sheaf $l\Ff$ on $\Xx^p$ and, by application of $k:\Sh(\Xx^p)\to\Sh(\Xx)^T$, a $T$-algebra $kl\Ff$ on $\Xx$. The unit of the $(\phi^*,\phi_*)$-adjunction yields a map of underlying sheaves $\Ff\to kl\Ff$ taking a local section $(s,U)$ of $\Ff$ to its $U$-equivalence class in $l\Ff$. Since for a compact open subset $U$, the $U$-equivalence relation is discrete, this is an isomorphism of sheaves on $\Xx$. In particular, we get the asserted isomorphim between the stalks of $\Ff$ and the stalks of $\,l\Ff$. Since the support of a spectral sheaf is automatically an upset, the same is true for the support of $l\Ff$.

It remains to be shown that $\Ff\cong kl\Ff$ is an isomorphism of $T$-algebras, i.e. the skew distributive lattices $S_\Ff$ and $S_{kl\Ff}$ are isomorphic, cf. Theorem~\ref{thm:main1}. For this, it suffices to observe that for any sheaf $\Gg$ on $\Xx^p$, and any compact open subsets $U,V$ of $\Xx$, the sheaf structure of $\Gg$ permits to define for local sections $(s,U),(t,V)$ of $\Gg$ a local section $(s\cj t, U\cup V)$ by glueing $s_{|U\backslash V}$ and $t$. This defines the join operation of the skew distributive lattice $S_{k\Gg}$. In the special case $\Gg=l\Ff$ we get the required isomorphism of skew distributive lattices $S_\Ff\cong S_{kl\Ff}$. 
\end{proof}

\begin{prp}\label{prp:Gsheaf2}For any sheaf $\,\Gg$ on $\Xx^p$ with saturated support, the canonical map $lk\Gg\to\Gg$ is an isomorphism.\end{prp}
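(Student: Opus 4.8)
The plan is to show that the canonical map $c_\Gg\colon lk\Gg\to\Gg$ is an isomorphism of sheaves on the Boolean space $\Xx^p$ by checking it on the basis of clopen subsets. Since $\KO(\Xx^p)$ is a basis for $\Xx^p$, and since both $lk\Gg$ (by Proposition \ref{prp:Gsheaf}) and $\Gg$ have saturated support, it suffices to prove that for every clopen $W$ the component
\[
c_W\colon (lk\Gg)(W)=\Big(\coprod_{U\in\KO(\Xx),\,U\supset W}\Gg(U)\Big)\big/\!\sim_W\ \longrightarrow\ \Gg(W),\qquad [(s,U)]\mapsto s|_W,
\]
is a bijection. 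Here I use that $k\Gg=\phi_*\Gg$ takes the value $\Gg(U)$ on a compact open $U\in\KO(\Xx)$ (a clopen upset of $\Xx^p$), so that $(lk\Gg)(W)$ is literally computed from the values of $\Gg$ on compact opens together with the join of the skew lattice $S_{k\Gg}$.

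First I would treat surjectivity, which is where the hypothesis of saturated support enters. Given $g\in\Gg(W)$ we have $W\subset\supp(\Gg)$; since $\supp(\Gg)$ is an open upset it is open in the spectral space $\Xx$, and compactness of $W$ yields a compact open $U$ with $W\subset U\subset\supp(\Gg)$. Now $U=W\sqcup(U\backslash W)$ is a clopen partition with $U\backslash W\subset\supp(\Gg)$, so $\Gg$ restricted to $U\backslash W$ is globally supported and Lemma \ref{lma:g=g} produces a section over $U\backslash W$. Gluing it with $g$ gives $s\in\Gg(U)$ with $s|_W=g$, whence $c_W[(s,U)]=g$.

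For well-definedness and injectivity I would unwind $\sim_W$ for the particular algebra $\Ff=k\Gg$. By Proposition \ref{prp:skewsheaf}iii, ``agreeing away from $V$'' for two sections over $U$ means having equal images in $\Ff_{U\backslash V}(U)\cong\Gg(U\backslash V)$, i.e. equal restrictions to the constructible set $U\backslash V$; thus $\sim_W$ is entirely governed by the restriction maps of $\Gg$ to compact opens and to their constructible differences. After reducing to a common domain $U=U'\cap U''\supset W$, the aim is to see that $W$-equivalence of $(s',U)$ and $(s'',U)$ is equivalent to the single equality $s'|_W=s''|_W$ in $\Gg(W)$. Injectivity of $c_W$ is then the easy implication (restrict the agreement on $W$ to the relevant compact-open pieces), and well-definedness is the reverse one.

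The main obstacle is precisely this last point: $W$ is clopen in the patch topology, but $U\backslash W$ is in general \emph{not} compact open, so the relation $\sim_W$ only constrains sections ``away from compact opens''. One therefore has to show that controlling the restrictions on all compact-open approximations of $U\backslash W$ recovers exactly the patch-section over $W$; this is where I would lean on the sheaf condition of $\Gg$ for finite clopen partitions together with the identification of stalks already obtained in Proposition \ref{prp:Gsheaf}. A less computational alternative would be to apply $k=\phi_*$ to $c_\Gg$: Proposition \ref{prp:Gsheaf}, applied to the algebra $k\Gg$, shows that $\phi_*c_\Gg$ is an isomorphism, and since $\phi_*$ reflects isomorphisms on $\Sh_s(\Xx^p)$ (cf. the proof of Theorem \ref{thm:main2}) one concludes that $c_\Gg$ itself is an isomorphism.
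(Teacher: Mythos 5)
Your first, computational route is in outline exactly the paper's own proof: the paper also compares clopen-by-clopen via $[(s,U)]\mapsto s|^U_W$, and also obtains surjectivity from saturation of the support together with Lemma~\ref{lma:g=g} (your gluing argument over the partition $U=W\sqcup(U\setminus W)$ with $U$ chosen inside $\supp(\Gg)$ is the intended one). But your route stops at the one point where the paper's proof has actual content, namely the identification of $\sim_W$ with the single equality $s_1|^{U_1}_W=s_2|^{U_2}_W$. The paper settles this by using that for $\Ff=k\Gg$ the skew join is gluing inside $\Gg$: ``agreeing away from $V$'' unwinds to $t\cj s_1|_U=t\cj s_2|_U$ in $\Gg(U)$ for all $t\in\Gg(V)$, and restricting this identity along $W\subset U\setminus V$ (via Proposition~\ref{prp:skewsheaf}i) gives $s_1|_W=s_2|_W$; the converse the paper observes to be immediate from the same gluing description. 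Note also that you have the two directions backwards: restriction goes from $U\setminus V$ \emph{down} to $W$, so the easy implication is well-definedness ($\sim_W$ implies agreement on $W$), while injectivity (agreement on $W$ implies $\sim_W$) is precisely the ``main obstacle'' you flag and leave unproved. Taken on its own, therefore, this route has a genuine gap, and it sits at the heart of the statement.

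Your fallback route, by contrast, is correct and genuinely different from the paper's argument. It is not circular: the fact that $\phi_*$ restricted to $\Sh_{s}(\Xx^p)$ reflects isomorphisms is established inside the Duskin-based proof of Theorem~\ref{thm:main2}, which uses neither $l$ nor Propositions~\ref{prp:Gsheaf} and~\ref{prp:Gsheaf2}. Two small points are needed to make it airtight: (a) both $lk\Gg$ and $\Gg$ lie in $\Sh_s(\Xx^p)$ (the former by Proposition~\ref{prp:Gsheaf}), so the reflection applies to $c_\Gg$; (b) Proposition~\ref{prp:Gsheaf} by itself only provides \emph{an} isomorphism $k\Gg\cong klk\Gg$ (the unit), so to conclude that $\phi_*c_\Gg$ is invertible you should check the triangle identity $\phi_*c_\Gg\circ\eta_{k\Gg}=\id_{k\Gg}$, which is immediate on compact opens: $s\mapsto[(s,U)]\mapsto s|_U=s$. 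What this route buys is that it bypasses the analysis of $\sim_W$ entirely; what it costs is that it imports the stalkwise, point-set arguments of the abstract proof of Theorem~\ref{thm:main2}, so the resulting ``constructive'' proof of monadicity would no longer be independent of Duskin's criterion, which is the stated purpose of Section~\ref{subsct:constructive}. As a proof of this proposition alone, however, it is sound.
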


\begin{proof}The $\PM$-algebra $k\Gg$ is by definition the sheaf $\phi_*\Gg$ equipped with the $T$-algebra structure $\phi_*\epsilon_\Gg$. The latter is given by

\begin{align*}
\phi_*\epsilon_\Gg\ \ \colon\ \ \PM \phi_*\Gg\ \ & \longrightarrow \ \ \phi_*\Gg\\
\bigveedot(s_i,a_i;b_i)\ & \mapsto\ \ \bigvee(s_i|^{a_i}_{b_i})\ \text{ (join taken in $\Gg$)}
\end{align*}
see Section~\ref{sct:patchmonad} for notation. From this $\PM$-algebra $k\Gg$ we obtain an associated sheaf
\begin{align*}
lk\Gg\ \ \colon\ \ \KO(\Xx^p)^\op\ \ & \longrightarrow \ \ \Set\\
W\ & \mapsto\ \ \{(s,U)\mid W\subset U\text{ and }s\in \Gg(U) \}/\sim_W
\end{align*}
as in Definition~\ref{dfn:constructive}, where $W$ is a clopen subset and $U$ a clopen upset of $\Xx^p$. We claim that $lkG\cong \Gg$. For each clopen subset $W$ consider the map
\[
(lk\Gg)(W)=\{(s,U)\mid W\subset U\text{ and } s\in \Gg(U) \}\longrightarrow  \Gg(W): (s,U)\mapsto s|^U_W
\]
and note that $(s_1,U_1)\sim_W(s_2,U_2)$ if and only if there is a clopen upset $U$ such that $W\subset U\subset U_1\cap U_2$ and the restrictions of $s_1,s_2$ to $U$ agree away from any $V\subset U$ avoiding $W$. This means that for any section $t$ of $\Gg$ over $V$
\[
t\cj s_1|^{U_1}_U=t\cj s_2|^{U_2}_U
\]
in $\Gg(U)$. Restricting both sides to $W$ we obtain (by Proposition~\ref{prp:skewsheaf}i) the identity
\[
s_1|^{U_1}_W=s_2|^{U_2}_W
\]
and this is clearly also sufficient. Therefore we obtain the equivalence
\[
(s_1,U_1)\sim_W(s_2,U_2)\quad\iff\quad s_1|^{U_1}_W=s_2|^{U_2}_W.
\]
yielding a bijection $lk\Gg(W)\cong \Gg(W)$ for all those clopen subsets $W$ for which either $\Gg(W)=\emptyset$ and $\Gg(U)=\emptyset$ if $U\supset W$, or then $\Gg(W)\not=\emptyset$ and $\Gg(U)\not=\emptyset$ if $U\supset W$. This is always the case if the support of $\Gg$ is an upset, cf. Lemma~\ref{lma:g=g}.\end{proof}

\begin{prp}\label{prp:Bool2}The category of right Boolean skew lattices (aka skew Boolean algebras) is a reflective subcategory of the category of right distributive skew lattices.\end{prp}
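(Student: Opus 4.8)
The plan is to build the reflection in three steps, reducing everything to facts already available. Given a right distributive skew lattice $S$, Corollary~\ref{cor:forgetful} provides its underlying right distributive band $(S,\cm)$; Proposition~\ref{prp:Bool1} provides a right Boolean band reflection $\eta\colon(S,\cm)\to B$; and Proposition~\ref{prp:Boolean} equips $B$ with a unique right Boolean skew lattice structure $(B,\cm,\cj)$. I would then show that $\eta$, regarded as a map $S\to(B,\cm,\cj)$, is a morphism of skew lattices and is universal among such maps into right Boolean skew lattices, thereby exhibiting $(B,\cm,\cj)$ as the reflection of $S$.

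The hard part is to verify that the band-reflection unit $\eta$ preserves the join $\cj$. For this I would first make the reflection explicit through Stone duality. Writing $\Ff$ for the spectral sheaf on $\Xx=\Spec(S/\Dd)$ dual to $S$ (Theorem~\ref{thm:main0}) and $\phi\colon\Xx^p\to\Xx$ for the patch refinement, the universal property of $\eta$ identifies $B$ with the right Boolean band $\el_{\KO(\Xx^p)}(\phi^*\Ff)$ of local sections of the inverse image sheaf on the Stone space $\Xx^p$, with $\eta$ induced by the adjunction unit $\Ff\to\phi_*\phi^*\Ff$. On $\el_{\KO(\Xx^p)}(\phi^*\Ff)$ the join of two sections $(s,U),(t,V)$ is, by Proposition~\ref{prp:Boolean} together with the patch description of Section~\ref{sct:patchmonad}, the disjoint patch of $\eta(s)$ over $U$ and $\eta(t)$ over the clopen set $V\setminus U$, the pair $U\sqcup(V\setminus U)$ being a clopen partition of $U\cup V$ in $\Xx^p$. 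On the other hand, the skew join $s\cj t$ in $S$ restricts to $s$ on $U$ by the absorption law $x\cm(x\cj y)=x$, and its image in $\phi^*\Ff$ restricts to $\eta(t)$ on $V\setminus U$ by compatibility of $\cj$ with restriction, cf.\ Proposition~\ref{prp:skewsheaf}(i). Since $\phi^*\Ff$ is a sheaf on $\Xx^p$, a section over $U\cup V$ is determined by its restrictions to the clopen partition $U\sqcup(V\setminus U)$; hence $\eta(s\cj t)$ coincides with the disjoint patch above, that is, $\eta(s\cj t)=\eta(s)\cj\eta(t)$.

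Granting that $\eta$ is a skew lattice morphism, the universal property becomes formal. Let $f\colon S\to T$ be a morphism of right distributive skew lattices with $T$ right Boolean. Its underlying band map $(S,\cm)\to(T,\cm)$ is a morphism of right distributive bands (it preserves $\cm$, the zero and global elements, and suprema of commuting elements, since $f$ preserves both $\cm$ and $\cj$), so it factors uniquely as $\bar f\circ\eta$ for a band morphism $\bar f\colon B\to(T,\cm)$ by Proposition~\ref{prp:Bool1}. Being a map between right Boolean bands, $\bar f$ automatically preserves the induced join by Proposition~\ref{prp:Boolean}, hence is a morphism of right Boolean skew lattices, and $\bar f\circ\eta=f$ because the two sides agree as band maps. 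Uniqueness of $\bar f$ already holds at the band level, so \emph{a fortiori} among skew lattice morphisms. This exhibits $(B,\cm,\cj)$ as the reflection of $S$ and shows that right Boolean skew lattices form a reflective subcategory of right distributive skew lattices. The single genuine obstacle is the join-preservation of $\eta$ treated in the second paragraph; everything else is bookkeeping around the two reflections of Propositions~\ref{prp:Bool1} and~\ref{prp:Boolean}.
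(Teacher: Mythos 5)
Your identification of $B$ with $\el_{\KO(\Xx^p)}(\phi^*\Ff)$ is correct (this is exactly the paper's closing Remark), but the step you yourself isolate as the crux --- that the band-reflection unit $\eta\colon S\to B$ preserves $\cj$ --- is false, and the appeal to Proposition~\ref{prp:skewsheaf}(i) is where it breaks. That proposition only says that $\cj$ commutes with restriction to \emph{compact open} subsets $W\in\KO(\Xx)$; it gives no control over germs of $s\cj t$ at points of the merely constructible set $V\setminus U$, and these germs in $\phi^*\Ff$ need not be those of $t$. Concretely, let $\Xx$ be the Sierpinski space ($\{1\}$ open, $\{0\}$ closed) and let $S$ be the right distributive skew lattice with $\Ff(\{1\})=\{b_1,b_2\}$, $\Ff(\{0,1\})=\{x_1,x_2\}$, $x_i|_{\{1\}}=b_i$; this is the skew lattice of sections of the globally supported Priestley sheaf with singleton stalk at $0$ and stalk $\{b_1,b_2\}$ at $1$, hence right distributive by Proposition~\ref{prp:Tskew} (or Theorem~\ref{thm:main3}). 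Absorption forces $b_1\cj x_2=x_1$: indeed $b_1\cm(b_1\cj x_2)=b_1$, i.e.\ $b_1\cj x_2$ is the unique element over $\{0,1\}$ restricting to $b_1$. But in $B$ the stalk of $\phi^*\Ff$ at $0$ is $\Ff_0=\Ff(\{0,1\})=\{x_1,x_2\}$, so $\eta(b_1)\cj\eta(x_2)$ --- the disjoint patch of $b_1$ over $\{1\}$ with the germ of $x_2$ over $\{0\}$ --- has germ $x_2$ at $0$, while $\eta(b_1\cj x_2)=\eta(x_1)$ has germ $x_1$ at $0$. So $\eta$ is not a morphism of skew lattices, and your formal universal-property argument, which is fine as bookkeeping, has nothing to stand on.

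The underlying misconception is that the Boolean envelope of the \emph{band} $(S,\cm)$ and the Boolean envelope of the \emph{skew lattice} $(S,\cm,\cj)$ coincide; the paper's final Remark states explicitly that they are quite different. Your $B$ is dual to $\phi^*\Ff$, which under the monadic equivalence of Theorem~\ref{thm:main2} corresponds to the \emph{free} $T$-algebra $T\Ff$, not to the $T$-algebra $(\Ff,\xi_\Ff)$ encoding $\cj$; the correct envelope $S_B$ is the skew lattice of sections of the sheaf $l\Ff$ of Definition~\ref{dfn:constructive}, and there is only a canonical epimorphism $B\onto S_B$ (apply $l$ to the action map $T\Ff\to\Ff$), which collapses precisely the germs your argument needs to keep distinct --- in the example above $B$ has $9$ elements while $S_B$ has $6$. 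This is why the paper's proof cannot stay at the band level: it first converts the join into a $T$-algebra structure on $\Ff$ (Theorem~\ref{thm:main1}), then passes through the monadic equivalence with saturated Priestley sheaves (Theorem~\ref{thm:main2}, Propositions~\ref{prp:Gsheaf} and~\ref{prp:Gsheaf2}) and takes sections of $l\Ff$ over clopens. Repairing your approach would amount to quotienting $B$ by the congruence generated by join-preservation of $\eta$, i.e.\ to forming the coequaliser of $\phi^*T\Ff\rightrightarrows\phi^*\Ff$ --- at which point you have reconstructed the paper's argument rather than shortcut it.
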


\begin{proof}We have seen in Theorem \ref{thm:main1} that the category of right distributive skew lattices over $L$ is equivalent to the category of global $\PM$-algebras over $\Spec(L)$. By Theorem \ref{thm:main2} the latter is equivalent to the category of global sheaves on $\Spec(L_B)$, which in turn by Proposition \ref{prp:Boolean} and Theorem \ref{thm:main0} is equivalent to the category of right Boolean skew lattices over $L_B$. Putting these equivalences together we can embed each right distributive skew lattice $S$ into a right Boolean skew lattice $S_B$ such that the following diagram$$\xymatrix{S\ar[r]\ar[d]&S_B\ar[d]\\S/\Dd\ar[r]&(S/\Dd)_B}$$ is a pullback diagram of right distributive skew lattices. The upper horizontal map corresponds to the inclusion of the sheaf $\Ff$ dual to $S$ into the sheaf $l\Ff$ dual to $S_B$, see Definition \ref{dfn:constructive} for an explicit formula of the Priestley sheaf $l\Ff$ and hence of the right Boolean skew lattice $S_B$ of local sections of $l\Ff$. Since the lower horizontal map is dual to patch refinement $\Spec(L_B)\to\Spec(L)$, the pullback property corresponds to the isomorphism $\Ff\cong kl\Ff$ of Proposition \ref{prp:Gsheaf}. The category of such pullback squares is thus equivalent to the category of right distributive skew lattices and contains the category of right Boolean skew lattices as the subcategory of those pullback squares in which the horizontal maps are invertible (i.e. $S$ is Boolean). It is clear that there is a reflection functor left adjoint to this inclusion.\end{proof}

\begin{rmk}The reflection functors of Propositions \ref{prp:Bool1} and \ref{prp:Bool2} are quite different. In the proof of Proposition \ref{prp:Bool1}, to each right distributive band $S$ over a distributive lattice $L$, is associated a right Boolean band $S'_B$ over the Boolean envelope $L_B$. The dual sheaf of $S'_B$ (cf. Theorem \ref{thm:main0}) may be identified with the inverse image $\phi^*\Ff$ of the dual sheaf $\Ff$ of $S$ with respect to $\phi:\Spec(L_B)\to\Spec(L)$. In particular, the sheaf $\PM\Ff=\phi_*\phi^*\Ff$ over $\Spec(L)$ is dual to the right distributive skew lattice obtained by pulling back $S'_B$ along the canonical inclusion $L\into L_B$.

If $S$ already underlies a right distributive skew lattice (cf. Corollary \ref{cor:forgetful}) the sheaf $\Ff$ dual to $S$ is a $T$-algebra by Theorem \ref{thm:main1}. Therefore, applying the functor $l$ of Definition \ref{dfn:constructive} to the action map $T\Ff\to\Ff$ yields an epimorphism $\phi^*F\to l\Ff$ inducing an epimorphism of right Boolean skew lattices $S'_B\to S_B$ over $L_B$. This relates the two reflection functors in an explicit manner.\end{rmk}

\


\begin{thebibliography}{99}

\bibitem{Abr91}S. Abramsky -- \emph{Domain theory in logical form}, Ann. Pure and Appl. Logic \textbf{ 51} (1991), 1--77.

\bibitem{BalbesDwinger}R. Balbes and Ph. Dwinger -- \emph{Distributive lattices}, Univ. Missouri Press, Columbia, 1974.

\bibitem{Barr}M. Barr -- \emph{Exact categories}, Lecture Notes in Math. \textbf{236} (1971), 1--120.

\bibitem{BarrWells}M. Barr and  Ch. Wells -- \emph{Toposes, triples and theories}, Grundlehren der Mathematischen Wissenschaften \textbf{278} (1985), xiii + 345pp.


\bibitem{BCGGK}A. Bauer, K. Cveto-Vah, M. Gehrke, S. J. van Gool, G. Kudryavtseva -- \emph{A non-commutative Priestley duality}, Topology Appl. \textbf{160} (2013), 1423--1438.

\bibitem{BK}C. Berger and R. M. Kaufmann -- \emph{Comprehensive factorisation systems}, Tbilisi Math. J. \textbf{10(3)} (2017), 255--277.

\bibitem{Berg}G. M. Bergman -- \emph{Actions of Boolean rings on sets}, Alg. Univ. \textbf{28} (1991), 153--187.

\bibitem{BGKS}C. Borlido, M. Gehrke, A. Krebs, and H. Straubing -- \emph{Difference hierarchies and duality with an application to formal languages}, Topology Appl. \textbf{273} (2020), 106975, 26p.

\bibitem{BW}S. Burris and H. Werner -- \emph{Sheaf constructions and their elementary properties}, Trans. Amer. Math. Soc. \textbf{248} (1979), 269--309.

\bibitem{CKLS}K. Cvetko-Vah, M. Kinyon, J. Leech and M. Spinks -- \emph{Cancellation in Skew Lattices}, Order \textbf{28} (2011), 9--32.


\bibitem{DST}M. Dickmann, N. Schwartz and M. Tressl -- \emph{Spectral spaces}, Cambridge University Press, New mathematical monographs \textbf{35} (2019), xiii + 632 pp.

\bibitem{D}J. Duskin -- \emph{Variations on Beck's tripleability criterion}, Reports of the Midwest Category Seminar, III, (1969), 74--129.



\bibitem{God}R. Godement -- \emph{Topologie alg\'ebrique et th\'eorie des faisceaux}, Publ. Inst. Math. Strasbourg, vol.XIII, Hermann, Paris, 1958.


\bibitem{Groth}A. Grothendieck and J.A. Dieudonn\'e -- \emph{\'El\'ements de g\'eom\'etrie alg\'ebrique III. \'Etude cohomologique des faisceaux coh\'erents}, Inst. Hautes \'Etudes Sci. Publ. Math. \textbf{11} (1961), 167 pp.

\bibitem{deGroot67}J. de Groot -- \emph{An isomorphism principle in general topology}, Bull. Amer. Math. Soc. {\bf 73} (1967), 465--467.


\bibitem{Hoch}M. Hochster -- \emph{Prime ideal structure in commutative rings}, Trans. Amer. Math. Soc. \textbf{142} (1969), 43--60.

\bibitem{Howie}J. M. Howie -- \emph{Fundamentals of Semigroup Theory}, Clarendon/Oxford Univ. Press, 1995.

\bibitem{Johnstone}P. T. Johnstone -- \emph{Stone spaces}, Cambridge University Press, 1982.

\bibitem{Kempf}G. R. Kempf -- \emph{Some elementary proofs of basic theorems in the cohomology of quasi-coherent sheaves}, Rocky Mountain
J. Math. \textbf{10} (1980), 637--645.

\bibitem{Ki}N. Kimura -- \emph{The structure of idempotent semigroups I}, Pacific J. Math. \textbf{8} (1958), 257--275.

\bibitem{KLP} M. Kinyon, J. Leech and J. Pita Costa -- \emph{Distributivity in skew lattices}, Semigroup Forum \textbf{91} (2015), 378--400.

\bibitem{Kopperman95} R. Kopperman -- \emph{Asymmetry and duality in topology}, Topology Appl. {\bf 66} (1995), 1--39.


\bibitem{KL}G. Kudryavtseva and M. V. Lawson -- \emph{Boolean sets, skew Boolean algebras and a non-commutative Stone duality}, Alg. Univ. \textbf{75} (2016), 1--19.

\bibitem{Lawson91}J. D. Lawson -- \emph{Order and strongly sober compactifications}, in ``Topology and category theory in computer science'', Oxford Univ. Press (1991), 179--205.

\bibitem{Le1}J. Leech -- \emph{Skew lattices in rings}, Alg. Univ. \textbf{26} (1989), 48--72.

\bibitem{Le2}J. Leech -- \emph{Skew Boolean algebras}, Alg. Univ. \textbf{27} (1990), 497--506.

\bibitem{Le3}J. Leech -- \emph{Normal skew lattices}, Semigroup Forum \textbf{44} (1992), 1--8.


\bibitem{MoerMacL}S. Mac Lane and I. Moerdijk -- \emph{Sheaves in geometry and logic: a first introduction to topos theory}, Universitext. Springer-Verlag, New York, 1992.

\bibitem{Nerode}A. Nerode -- \emph{Some Stone spaces and recursion theory},  Duke Math. J. {\bf 26} (1959), 397--406. 

\bibitem{Par}R. Par\'e -- \emph{Connected components and colimits}, J. Pure Appl. Alg. {\bf 3} (1973), 21--42.

\bibitem{P}H. A. Priestley -- \emph{Representation of distributive lattices by means of ordered Stone spaces}, Bull. London Math. Soc. \textbf{2} (1970), 186--190.


\bibitem{Stone36}M. H. Stone -- \emph{The theory of representations of Boolean algebras}, Trans. Amer. Math. Soc. \textbf{40} (1936), 37--111.

\bibitem{Stone37}M. H. Stone  -- \emph{Topological representations of distributive lattices and Brouwerian logics}, {\u C}asopis pro p{\u e}stov{\' a}n{\' i} matematiky a fysiky {\bf 067.1} (1937-38), 1--25. 

\bibitem{SW}R. Street and R. F. C. Walters -- \emph{The comprehensive factorization of a functor}, Bull. Amer.
Math. Soc. \textbf{79} (1973), 936--941.

\bibitem{YK}M. Yamada and N. Kimura -- \emph{Note on Idempotent Semigroups II}, Proc. Japan Acad. \textbf{34} (1957), 110--112.




\end{thebibliography}
\end{document}